\title{Upsilon-like concordance invariants from $\mathfrak{sl}_n$ knot cohomology}
\author{Lukas Lewark}
\address{Mathematisches Institut\\
Universit\"at Bern\\
Switzerland}
\email{lukas@lewark.de}
\urladdr{http://www.lewark.de/lukas/}
\author{Andrew Lobb} 
\address{Mathematical Sciences\\
Durham University\\
UK}
\email{andrew.lobb@durham.ac.uk}
\urladdr{http://www.maths.dur.ac.uk/users/andrew.lobb/}
\def\@tocline#1#2#3#4#5#6#7{\relax
  \ifnum #1>\c@tocdepth %
  \else
    \par \addpenalty\@secpenalty\addvspace{#2}%
    \begingroup \hyphenpenalty\@M
    \@ifempty{#4}{%
      \@tempdima\csname r@tocindent\number#1\endcsname\relax
    }{%
      \@tempdima#4\relax
    }%
    \parindent\z@ \leftskip#3\relax \advance\leftskip\@tempdima\relax
    \rightskip\@pnumwidth plus4em \parfillskip-\@pnumwidth
    #5\leavevmode\hskip-\@tempdima
      \ifcase #1
       \or\or \hskip 2em \or \hskip 2em \else \hskip 3em \fi%
      #6\nobreak\relax
    \hfill\hbox to\@pnumwidth{\@tocpagenum{#7}}\par
    \nobreak
    \endgroup
  \fi}
\newcommand{\LeftEqNo}{\let\veqno\@@leqno}
\newcommand{\khoca}{\texttt{khoca}}
\newlength{\lowerhalftmp}
\crefname{subsection}{subsection}{subsections}
\Crefname{subsection}{Subsection}{Subsections}
\definecolor{darkblue}{RGB}{0,0,96}
\definecolor{gray}{RGB}{127,127,127}
\definecolor{darkred}{RGB}{160,0,0}
\definecolor{lightyellow}{RGB}{255,255,128}
\Crefname{subsection}{Section}{Sections}
\Crefname{prop}{Proposition}{Propositions}
\crefname{equation}{}{}
\let\cref\Cref
\DeclareMathOperator{\ev}{ev}
\DeclareMathOperator{\im}{im}
\newcommand{\quax}{\hskip 0.75em plus 0.15em \ignorespaces}
\def\arxiv#1{\relax\ifhmode\unskip\quax\fi
    \href{http://arxiv.org/abs/#1}%
{\tt arXiv:\penalty -100\unskip#1}}    
\def\MR#1{\relax\ifhmode\unskip\quax\fi
    \href{http://www.ams.org/mathscinet-getitem?mr=#1}{MR#1}}
\def\xox#1{\!\!\!\!\csname xx#1\endcsname}
\newtheorem{theorem}{Theorem}[section]
\newtheorem{definition}[theorem]{Definition}
\newtheorem{lemma}[theorem]{Lemma}
\newtheorem{prop}[theorem]{Proposition}
\newtheorem{proposition}[theorem]{Proposition}
\theoremstyle{definition}
\newtheorem{remark}[theorem]{Remark}
\renewcommand{\epsilon}{\varepsilon}
\DeclareMathAlphabet{\mathpzc}{OT1}{pzc}{m}{it}
\newcommand{\Z}{\mathbb{Z}}
\newcommand{\CC}{\mathbb{C}}
\newcommand{\Q}{\mathbb{Q}}
\newcommand{\R}{\mathbb{R}}
\newcommand{\tensor}{\otimes}
\newcommand{\hash}{\#}
\newcommand{\SL}{\mathfrak{sl}}
\newcommand{\GG}{\mathcal{G}}
\newcommand{\gr}{{\rm Gr}}
\newcommand{\F}{\mathcal{F}}
\newcommand{\cC}{\mathcal{C}}
\newcommand{\upsln}{\gimel}
\newcommand{\CKR}{C_{\potential}}
\newcommand{\HKR}{H_{\potential}}
\newcommand{\potential}{\partial w}
\newcommand{\differential}{d}
\begin{document}

\keywords{Khovanov-Rozansky cohomology, Knot concordance, Knot Floer homology}
\subjclass[2010]{57M25}
\begin{abstract}
We construct smooth concordance invariants of knots $K$ which take the form of piecewise linear maps $\upsln_n(K) : [0,1] \rightarrow \R$ for $n \geq 2$.  These invariants arise from $\SL_n$ knot cohomology.  We verify some properties which are analogous to those of the invariant $\Upsilon$ (which arises from knot Floer homology), and some which differ.  We make some explicit computations and give some topological applications.

Further to this, we define a concordance invariant from equivariant $\SL_n$ knot cohomology which subsumes many known concordance invariants arising from quantum knot cohomologies.

\end{abstract}

\maketitle

\setcounter{tocdepth}{1}
\tableofcontents

\section{Introduction}
\label{sec:introduction}
\subsection{Where the invariants come from}
Given an oriented knot diagram $D$ of a knot $K$, a basepoint on $D$, and a choice of monic degree $n \geq 2$ polynomial (the \emph{potential}) $\potential \in \CC[x]$, the construction of the Khovanov-Rozansky $\SL_n$ knot cohomology gives a filtered cochain complex of finitely generated free $\CC[x]/\potential$-modules \cite{khr1,gornik,lobb1,wu3,krasnerEquivariant}:
\[ \cdots \subseteq \F^j C_{\potential}^i (D) \subseteq \F^{j+1} C_{\potential}^i (D) \subseteq \cdots {\rm ,}\]
\[ d : \F^j C_{\potential}^i (D) \longrightarrow \F^j C_{\potential}^{i+1} (D) {\rm ,} \,\, d^2 = 0 {\rm .}\]
This filtration $\F$ is known as the \emph{quantum} filtration.

In previous work \cite{lewarklobb}, the authors studied the associated graded vector space to the cohomology
\[ \gr^j H_{\potential}^i(D) := \frac{\F^{j} H_{\potential}^i (D)}{\F^{j-1} H_{\potential}^i (D)} \]
in the cases when $\potential$ is a product of distinct linear factors.  In these cases, the bigraded complex vector space $\gr^j H_{\potential}^i(D)$ is an invariant of $K$, it is of total dimension $n$, and supported in grading $i=0$.

The quantum gradings of the support of the cohomology give rise to lower bounds on the smooth $4$-ball genus of the knot, and it was a principal object of \cite{lewarklobb} to demonstrate that these bounds are heavily dependent on the choice of $\potential$, and display interesting behavior from various points of view.

For this paper, our starting point is somewhat different: we fix the potential to be $\potential = x^n - x^{n-1}$.  In the cases where $n \geq 3$ (which turn out to be the interesting cases) this potential is not a product of distinct linear factors.  The bigraded vector space $\gr^j H_{\potential}^i(D)$ is still an invariant of $K$, but it now has dimension equal to one more than the dimension of standard Khovanov-Rozansky $\SL_{n-1}$ cohomology \cite{rosewedrich}.  The copy of the $\SL_{n-1}$ cohomology arises from the root $x=0$ of $\potential$ which is of multiplicity $n-1$, while the extra copy of $\CC$ should be thought of as $\SL_1$ cohomology corresponding to the simple root $x=1$.

There is an easily described cocycle $\psi \in C_{\potential}^0 (D)$ generating this extra copy of $\CC$.  The minimum $j$ (suitably renormalized) such that
\[ [\psi] \in \F^{j} H_{\potential}^0 (D) \]
turns out to be a $\Q$-valued invariant of the smooth concordance class of $K$, and to provide a lower bound on the smooth $4$-ball genus of $K$.  It is not quite a concordance homomorphism to $\Q$ (as we shall see later in examples), but it is at least a \emph{quasi-homomorphism} (a homomorphism up to some bounded error).

We note here that this value of $j$ is characterized by being the minimal value of $j$ such there exists a $\psi' \in \F^{j} C_{\potential}^0 (D)$ which is \emph{cohomologous} to $\psi$.  This minimal value may well not be attained by $\psi$ itself, and in fact the quantum filtration grading of the cocycle $\psi$ corresponds to the `slice-Bennequin' bound on the smooth $4$-ball genus of $K$ arising from the diagram $D$.

So far this story has concerned the quantum filtration; now we introduce another filtration on $C_{\potential}^* (D)$ preserved by the differential, which we shall call the $x$\emph{-filtration} (previously used in the case $\potential = x^n$ in \cite{lew2}).  Given a cochain $c \in C_{\potential}^i (D)$ this is the filtration that simply counts the maximal power $k \leq n - 1$ of $x$ such that $c = x^k c'$ for some $c' \in C_{\potential}^i (D)$.  In other words it is the filtration
\[ \{0\} \subseteq x^{n-1} C_{\potential}^i(D) \subseteq x^{n-2} C_{\potential}^i(D) \subseteq \cdots \subseteq x C_{\potential}^i(D) \subseteq C_{\potential}^i(D) {\rm .}\]

The cocycle $\psi$ mentioned above is quite uninteresting when looked at from the point of view of the $x$-filtration. In fact, $\psi$ is a $1$-eigenvector for the action of $x$ so certainly we have
\[ \psi = x^{n-1} \psi \in x^{n-1} C_{\potential}^0(D) {\rm .}\]
However, there is the possibility that $\psi$ may be \emph{cohomologous} with elements which are both of lower quantum filtration and of lower $x$-filtration.  Indeed we shall give examples where this is the case, and it is the existence of such examples that makes our construction non-trivial.

Now that we have two filtrations on the cochain complex, we can blend them in a similar way to the blending of the algebraic and Alexander filtrations in knot Floer homology.  As in knot Floer homology, where the blending gives the invariant~$\Upsilon$, we get a piecewise linear map on an interval
\[ \upsln_n(D) : [0,1] \rightarrow \R { \rm .} \]

\subsection{A few words about equivariant cohomology}

Before turning to the comparison with invariants arising from Floer homology, we consider the most general setting in which concordance invariants arise from $\SL_n$ knot cohomology.

For a given knot diagram $D$ of a knot $K$, the equivariant cohomology of $K$ is the cohomology of a graded cochain complex $C_{U(n)}(D)$ of free modules over a multivariable polynomial ring \cite{krasnerEquivariant}.  By specializing these variables to take values in the complex numbers one obtains all of the filtered $\SL_n$ Khovanov-Rozansky cochain complexes $\CKR(D)$.  It follows that the equivariant cohomology subsumes all of the known information contained in Khovanov-Rozansky cohomology about the slice genus and concordance class of $K$, including that in $\upsln_n(K)$.  The $\Upsilon$-like properties and computability of $\upsln_n(K)$ nevertheless give it advantages over the full equivariant cohomology.

We pursue the equivariant viewpoint in the final section of the paper, extracting a concordance invariant directly.  This takes the form of a particular indecomposable summand $S_n(K)$ of $C_{U(n)}(D)$ up to isomorphism.  In the case of a knot $K$ for which this summand is a shifted free module of rank $1$,
the slice genus information that we know how to extract from $\SL_n$ cohomology only depends on the shift, and consequently $\upsln_n(K)$, for example, is linear for all $n \geq 2$.

\subsection{Some Floer homological invariants}
We now provide context for our main results by briefly discussing $\Upsilon$ and some other invariants arising from Floer homology.

Ozsv\'ath-Szab\'o \cite{osz10} and Rasmussen \cite{ras} defined the invariant $\tau$, which takes integer values on knots in the $3$-sphere.  This (or more precisely its negative) was the first example of a \emph{slice-torus} invariant.

\begin{definition}[{cf. \cite{livingston,lew2}}]
	\label{defn:slicetorus}
	Let $\nu: \cC \rightarrow \R$ be a homomorphism from the smooth concordance group of oriented knots to the reals.  We say that $\nu$ is a \emph{slice-torus} invariant if
	
	\begin{enumerate}
		\item $g_*(K) \geq |\nu(K)|$ for all oriented knots $K$, where we write $g_*(K)$ for the smooth slice genus of $K$.
		\item $\nu(T(p,q)) = \frac{-(p-1)(q-1)}{2}$ for $T(p,q)$ the $(p,q)$-torus knot.
	\end{enumerate}
\end{definition}

The second example -- the $s$ invariant (or more precisely $-s/2$) -- was due to Rasmussen \cite{ras3} and had a purely combinatorial definition in terms of the Lee perturbation \cite{lee} of Khovanov cohomology.  The reason for our normalization convention in \cref{defn:slicetorus} is that there is a slew of such invariants (see for example \cite{wu3}, \cite{lobb1}, \cite{lewarklobb}) arising from $\SL_n$ Khovanov-Rozansky cohomology, which (in the original definition \cite{khr1}) is supported in negative quantum gradings for non-trivial positive knots.

Slice-torus invariants are good, for example, for finding free summands of the knot concordance group, and sets of linearly independent slice-torus invariants are even more useful from this point of view.  A weakness of slice-torus invariants is that they all agree on quasi-positive and homogeneous knots \cite{kawamura, lobb5, lew2}. The class of homogeneous knots includes all alternating knots. For an alternating knot~$K$, every slice-torus invariant $\nu$ satisfies $\nu(K) = -\frac{\sigma(K)}{2}$, where $\sigma$ is the classical knot signature.  Therefore slice-torus invariants necessarily miss some of the information contained in the smooth concordance class of a knot.

Let us turn next to the knot invariant $\Upsilon$ defined by Ozsv\'ath-Stipsicz-Szab\'o \cite{ossz} and interpreted in an excellent survey article by Livingston \cite{liv}.  This takes the form of a piecewise linear map
$ \Upsilon(K) : [0,1] \rightarrow \R. $
The actual domain of definition of $\Upsilon(K)$ is the interval $[0,2]$, but we allow ourselves to consider this restriction since $\Upsilon(K)$ satisfies $\Upsilon(K)(1+t) = \Upsilon(K)(1-t)$.

We now collect some facts about $\Upsilon$ under the heading of the following theorem.
\begin{theorem}[\cite{ossz}]
	\label{thm:upsilonfactoids}
        \begin{enumerate}[label=(\arabic*)]
                \item $\Upsilon(K)$ is a smooth concordance invariant of $K$.
		\item $\Upsilon$ is a homomorphism from the smooth concordance group of knots to the group of piecewise linear functions on the interval.
		\item $\vert \Upsilon(K)(t) \vert \leq t g_*(K)$ for all $t$, where we write $g_*(K)$ for the smooth $4$-ball genus of $K$. \label{upsilon:3}
		\item For small $t$, $\Upsilon(K)(t) = - \tau(K) t$ (we may write this as $\Upsilon(K)'(0) = -\tau(K)$ where the right-hand derivative is understood).
		\item For quasi-alternating knots, we have $\Upsilon(K)(t) = -\tau(K)t = - \frac{\sigma(K)}{2} t$ for all $t \in [0,1]$.
	\end{enumerate}
\end{theorem}

We note further that the concordance homomorphism to $\R$ given by evaluating at the right-hand endpoint $K \mapsto \upsilon(K) : = \Upsilon(K)(1)$ is interesting because it gives rise to the lower bound $|\upsilon(K) - \frac{\sigma(K)}{2}|\leq\gamma_4(K)$ on the smooth non-orientable $4$-ball genus $\gamma_4$ \cite{ossz2}. It can be shown that $\upsilon$ always takes values in the integers.

Related to $\upsilon(K)$ is an invariant $\varphi(K)$ defined by Golla and Marengon \cite{goma} in terms of earlier invariants defined by Rasmussen \cite{ras5} and studied by Ni and Wu~\cite{niwu}.  This invariant $\varphi(K)$ gives rise to a similar lower bound, namely $\frac{\sigma(K)}{2} - \varphi(K) \leq \gamma_4(K)$.  In contrast to $\upsilon$, however, $\varphi$ takes values only in the \emph{non-negative} integers, so (given that it is not identically zero) it cannot be a concordance homomorphism.  However, it is at least subadditive with respect to the connected sum of knots, which we denote by $\#$:
\[ \varphi ( K_1 \# K_2 ) \leq \varphi(K_1) + \varphi(K_2) {\rm .} \]
We note that the related bound $\frac{\sigma(K)}{2} - \varphi(K)$ for $\gamma_4(K)$ is superadditive.

\subsection{Main results}
We begin by running down the list of properties of $\Upsilon$ given in \cref{thm:upsilonfactoids}, and seeing where those of $\upsln_n$ agree or differ.  Firstly, there is the question of concordance invariance.

\begin{theorem}
	\label{thm:upsln_concordance_invariant}
	The map $\upsln_n(D)$ is a knot invariant so we may write it as $\upsln_n(K)$.  Furthermore, $\upsln_n(K)$ only depends on the smooth concordance class of $K$.
\end{theorem}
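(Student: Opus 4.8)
The plan is to verify the two claims — invariance under Reidemeister moves, and invariance under concordance — separately, in that order. For invariance as a knot invariant, I would proceed move by move. For each Reidemeister move relating diagrams $D$ and $D'$, the Khovanov-Rozansky construction supplies a chain homotopy equivalence $\varphi : \CKR(D) \to \CKR(D')$. The key point is that $\varphi$ must be shown to respect \emph{both} filtrations simultaneously, up to an overall shift that is move-independent: it should be filtered with respect to the quantum filtration $\F$ (this is the standard fact underlying the invariance of the $s$-type invariants) and also filtered — in fact a genuine $\C[x]/\partial w$-module map — with respect to the $x$-filtration, since the maps assigned to Reidemeister moves are built from elementary cobordism maps that commute with the $x$-action. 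Since $\upsln_n(D)$ is defined purely in terms of the bifiltration level at which the class $[\psi]$ becomes representable, and since $\varphi$ carries the distinguished cocycle $\psi$ for $D$ to (a cocycle cohomologous to) the distinguished cocycle $\psi$ for $D'$ — here one uses that $\psi$ generates the canonical extra $\C$-summand coming from the simple root $x=1$, which is natural under the equivalence — the bifiltration data is preserved, and the normalization conventions (which involve the writhe and the number of Seifert circles) are arranged precisely so that the shifts cancel. This gives a well-defined $\upsln_n(K)$.

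For concordance invariance, I would run the standard cobordism-map argument. Given a smooth concordance $C$ from $K_0$ to $K_1$, decompose it into elementary pieces and obtain an induced map $\CKR(C) : \CKR(D_0) \to \CKR(D_1)$. One shows this map is filtered of the appropriate degree with respect to the quantum filtration — this is the usual adjunction/genus-bound input, controlled by the genus of the cobordism, which here is $0$ — and, crucially, is again a $\C[x]/\partial w$-module map, hence filtered with respect to the $x$-filtration as well. Applying the same argument to the reversed-and-turned-around concordance $\bar C$ from $K_1$ to $K_0$, and using that $\bar C \circ C$ is concordant rel boundary to a product (so the composite map is, up to filtered homotopy, the identity — or at least an isomorphism in the relevant associated graded, which is all one needs since that associated graded is the $K$-invariant of total dimension $n+ \dim\Kh_{\sl_{n-1}}$ established in \cite{lewarklobb}), one concludes the two cobordism maps are mutually inverse bifiltered isomorphisms up to the expected shifts. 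Since both maps send the distinguished class $[\psi]$ for one knot to the distinguished class for the other (again by naturality of the $\sl_1$-summand), the bifiltration level of $[\psi]$, and hence $\upsln_n$, agrees for $K_0$ and $K_1$.

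The main obstacle is the bookkeeping of the \emph{two} filtrations at once: while quantum-filtered invariance of Reidemeister and cobordism maps is well-trodden, I need the same maps to interact correctly with the $x$-filtration, and the cleanest way to get this is to observe that all the structural maps are morphisms of $\C[x]/\partial w$-modules, so they automatically send $x^k C \to x^k C$; the subtlety is only at the Reidemeister I move, where the local model involves a generator that is not an eigenvector of $x$, and one must check the explicit homotopy still respects the $x$-powers up to the prescribed shift. A secondary point requiring care is confirming that the cocycle $\psi$ — or rather its cohomology class, together with its bifiltration behaviour — is genuinely natural under all these maps and not merely well-defined on the nose for a fixed diagram; this follows from the fact that $\psi$ spans the unique $1$-eigenspace summand of the $x$-action on cohomology, which is preserved by any $\C[x]/\partial w$-module map inducing an isomorphism on cohomology. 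I do not expect either point to be genuinely hard, but they are where the real content beyond the classical $s$-invariant story lies.
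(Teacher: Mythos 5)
Your overall strategy — build cobordism (and Reidemeister) maps that are filtered with respect to both filtrations, track the distinguished class $[\psi]$, and run the argument in both directions — matches the paper. But two points need tightening.

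First, you do not need (and should not try to prove) that $\bar C_* \circ C_*$ is the identity up to filtered homotopy, or even an isomorphism on the associated graded. The paper's argument is strictly cheaper: $C_*$ is $\G_t$-filtered of degree $0$ and carries the $1$-dimensional space $\langle[\psi(K_0)]\rangle$ onto $\langle[\psi(K_1)]\rangle$, so by \cref{defn:almostgimel} one gets $\gamma(K_1)(t)\le\gamma(K_0)(t)$; the reversed concordance gives the opposite inequality. That's the whole proof. Your detour through the composite risks circularity (you're invoking concordance invariance of an associated-graded invariant to deduce something used to prove concordance invariance), and it demands a functoriality statement for Khovanov--Rozansky cobordism maps that the paper nowhere needs.

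Second, and this is the genuine gap: your claim that the cobordism map is ``again a $\C[x]/\partial w$-module map, hence filtered with respect to the $x$-filtration'' is too quick. The $x$-action on $\CKR(D_i)$ is multiplication at a \emph{chosen basepoint} of $D_i$, and the cobordism map a priori relates the module structure at the basepoint of $D_0$ to that at the basepoint of $D_1$ only if the two basepoints can be joined by an arc on the cobordism surface that is everywhere transverse to the movie slicing. The paper's \cref{prop:conc_invari_2} does exactly this: it picks basepoints joined by a generic arc with no horizontal tangencies, tracks the basepoint through each frame, and checks that the elementary cobordism maps commute with multiplication at the (locally constant) basepoint. Without this, ``$\C[x]/\partial w$-linearity'' of $\Sigma_*$ is not immediate. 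Relatedly, your worry about Reidemeister I involving a non-eigenvector is sidestepped in the paper by using basepoint-avoiding moves — the basepoint never sits at a crossing — so the $x$-action is literally the same on both sides and the Reidemeister isomorphisms commute with it on the nose; no ``shift'' compensation is needed, and none is built into \cref{defn:gimel} for that purpose.
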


Next we observe that $\upsln_n$ is almost a concordance homomorphism.

\begin{theorem}
	\label{thm:quasi-homomorphism}
	We have that $\upsln_n$ is a \emph{quasi-homomorphism} from the smooth concordance group of knots to the group of piecewise linear functions on the interval.  More precisely, if we write $K_1 \# K_2$ for the connect sum of the knots $K_1$ and $K_2$ then we have
	\[ |\upsln_n(K_1 \# K_2)(t) - \upsln_n(K_1)(t) - \upsln_n(K_2)(t)| \leq 2t \]
	for all $t \in [0,1]$.
\end{theorem}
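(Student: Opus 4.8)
The plan is to exploit the fact that, for the connect sum $K_1 \# K_2$, there is a well-behaved diagram $D$ formed by placing diagrams $D_1$ of $K_1$ and $D_2$ of $K_2$ side by side and merging along the basepoint arc, for which the cochain complex $C_{\partial w}^*(D)$ is quasi-isomorphic (as a doubly filtered complex) to a tensor product $C_{\partial w}^*(D_1) \otimes_{\C[x]/\partial w} C_{\partial w}^*(D_2)$, with both the quantum filtration and the $x$-filtration behaving additively under the tensor product up to the usual shift conventions. Under this identification the distinguished cocycle for $K_1 \# K_2$ is (cohomologous to) $\psi_1 \otimes \psi_2$, where $\psi_i$ is the distinguished cocycle for $K_i$: this follows because $\psi$ is the $1$-eigenvector of $x$ coming from the simple root $x = 1$, and the $1$-eigenspace of the merge map is one-dimensional and spanned by $\psi_1 \otimes \psi_2$. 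I would first set up this tensor-product description carefully, citing the analogous merge/connect-sum discussion for $\sl_n$ cohomology.

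Next I would record how $\upsln_n$ is extracted from the two filtrations — say $\upsln_n(K)(t)$ is, up to normalization, the minimum over representatives $\psi'$ cohomologous to $\psi$ of a weighted combination $(1-t)\cdot(\text{quantum level of }\psi') + t \cdot(\text{$x$-level of }\psi')$ (the precise blend is fixed earlier in the paper). The key inequality in one direction is superadditivity of this minimum under tensor product: given optimal representatives $\psi_1'$ and $\psi_2'$ for $K_1$ and $K_2$ at parameter $t$, the element $\psi_1' \otimes \psi_2'$ is cohomologous to $\psi_1 \otimes \psi_2$ and its quantum and $x$-levels are at least the sums of the respective levels of the factors, giving
\[ \upsln_n(K_1 \# K_2)(t) \leq \upsln_n(K_1)(t) + \upsln_n(K_2)(t) + C t \]
for a universal constant $C$ accounting only for the normalization shifts, which I expect to work out to $C = 0$ or absorb into the $2t$ bound. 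The opposite inequality is the genuinely hard part.

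For the reverse direction I would argue that an optimal representative $\psi'$ for $K_1 \# K_2$ can be "split": one uses a contraction of $C_{\partial w}^*(D_i)$ onto its cohomology (which is supported in cohomological degree $0$ and has dimension $n$) to write any cocycle in the tensor complex, up to a coboundary, as a sum of pure tensors of cohomology classes; among these, the component lying in the $1$-eigenspace of $x$ on both factors contributes $\psi_1 \otimes \psi_2$, and one must control how much the quantum and $x$-filtration levels can drop in passing from $\psi'$ to this pure-tensor splitting. The drop in the quantum filtration is bounded by the failure of the contracting homotopy to be filtered, which is where the error term enters; this is exactly the mechanism by which $\Upsilon$-type invariants fail to be strictly additive, and the bounded error $2t$ should come from a uniform bound (independent of the knots, depending only on $\partial w = x^n - x^{n-1}$) on the filtration degree of the relevant homotopies, the two coming from the two filtrations being blended. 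The main obstacle, then, is making the splitting estimate precise: showing that the non-filtered part of the identification between $C_{\partial w}^*(D_1 \# D_2)$ and the tensor product shifts the blended filtration level by at most $2t$. I would handle this by reducing, via the homotopy equivalences, to the level of the finite-dimensional bigraded cohomology groups and a careful bookkeeping of the Künneth spectral sequence's behavior with respect to both filtrations, where the rank-$n$ bound on each factor's cohomology keeps everything finite and controllable.
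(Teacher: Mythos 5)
Your approach is genuinely different from the paper's, and the reverse direction---which you yourself flag as ``the genuinely hard part''---has a real gap that I do not think your splitting/K\"unneth strategy can close. The difficulty is that the contraction of $C^*_{\partial w}(D_i)$ onto its cohomology is not a filtered map in any uniformly controllable way: the filtration jump of such a homotopy equivalence depends on the knot (it is exactly this jump that $\gamma$ measures), so there is no knot-independent bound on how much the blended filtration level can drop in passing from a cocycle of the connect-sum complex to a decomposed representative. Asserting that ``a uniform bound, depending only on $\partial w$'' exists on the filtration degree of the relevant homotopies is essentially what has to be proved; the K\"unneth spectral sequence records the associated graded and will not by itself give a filtered estimate of the kind you need. (As a smaller point, the weighting you write down for the blend is not the one from \Cref{defn:slopingfiltration}; the $\mathcal{G}_t$-level of a cochain in $\F^j \cap x^k$ is $tj-(1-t)k$, not $(1-t)j+tk$, though this is a sign convention rather than the main issue.)

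The paper sidesteps cochain-level control of the connect-sum map entirely. It first establishes two structural facts about the single-knot invariant. \Cref{prop:supportinacone} shows that the graph of $\upsln_n(K)$ lies between the lines $y=t\,\upsln_n(K)(1)$ and $y=t\,\upsln_n(K)'(0)$; this comes from the geometric picture in which $\gamma(K)(t)$ is read off from a line of slope $t/(1-t)$ pivoting through finitely many lattice points in the strip $\Z\times[0,n-1]$, where the first pivot is at height $n-1$ (governed by the reduced quantum degree $r$) and the last is at height $0$ (governed by the unreduced quantum degree $u$). \Cref{prop:slope_vs_value_at_1} then bounds the width of this cone: $\upsln_n(K)'(0)-1\leq\upsln_n(K)(1)$, obtained by comparing $r$ and $u$ via the maps $x^{n-1}\CKR\hookrightarrow\CKR\xrightarrow{x^{n-1}}x^{n-1}\CKR$, which are filtered of quantum degrees $0$ and $2(n-1)$. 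Combining these, $\upsln_n(K)(t)$ lies in $[t(\upsln_n(K)'(0)-1),\;t\,\upsln_n(K)'(0)]$ for every $t\in[0,1]$, i.e.\ $\upsln_n(K)(t)-t\,\upsln_n(K)'(0)\in[-t,0]$. Since $\upsln_n'(0)$ is a genuine concordance homomorphism (\Cref{thm:derivativeatzero}, proved by a tensor-product argument similar in spirit to your easy direction, but confined to the reduced subcomplex and to a single filtration), the one-sided errors for $K_1\#K_2$, $K_1$, $K_2$ combine to give exactly the asserted bound of $2t$. No homotopy between $\CKR(D_1\#D_2)$ and a tensor complex needs to be controlled with respect to both filtrations simultaneously.
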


This can be regarded as either a strength or a weakness of $\upsln_n$.  The property of being a homomorphism is restrictive, although it can be useful for some applications.  We can be more specific about the failure to be a concordance homomorphism.  
Firstly, note that there is no failure near $0$.

\begin{theorem}
	\label{thm:derivativeatzero}
	The right-handed derivative $\upsln_n (K)'(0)$  is a slice-torus concordance homomorphism in the sense of \cref{defn:slicetorus}.
\end{theorem}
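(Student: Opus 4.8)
The plan is to check the two defining conditions of \cref{defn:slicetorus} for $\nu(K) := \upsln_n(K)'(0)$, together with the underlying fact that $\nu$ is a concordance homomorphism $\cC \to \R$. The right-hand derivative exists because $\upsln_n(K)$ is piecewise linear, hence linear on some interval $[0,\epsilon]$, and concordance invariance of $\nu$ is inherited directly from \cref{thm:upsln_concordance_invariant}. The genus bound is the short part: granting the $\upsln_n$-analogue of \cref{thm:upsilonfactoids}(3), namely $|\upsln_n(K)(t)| \leq t\, g_*(K)$ for all $t$ (proved in the usual way, by noting that a connected genus-$g$ cobordism in $B^4$ from the unknot to $K$ induces a map of filtered complexes taking the unknot's cocycle to a cochain cohomologous to $\psi$ and shifting the blended filtration by at most a fixed multiple of $tg$), one divides by $t$ and lets $t \to 0^+$ to obtain $|\nu(K)| \leq g_*(K)$; by continuity of $\upsln_n(K)$ at $0$ this also forces $\upsln_n(K)(0)=0$, so that $\nu(K) = \lim_{t\to 0^+} \upsln_n(K)(t)/t$.

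The homomorphism property is the heart of the matter, and the quasi-homomorphism bound of \cref{thm:quasi-homomorphism} is not by itself enough: dividing $|\upsln_n(K_1 \# K_2)(t) - \upsln_n(K_1)(t) - \upsln_n(K_2)(t)| \leq 2t$ by $t$ and letting $t \to 0^+$ yields only $|\nu(K_1 \# K_2) - \nu(K_1) - \nu(K_2)| \leq 2$. So I would instead argue directly from the Künneth picture: $\CKR(D_1 \# D_2)$ is, up to an overall grading shift, the tensor product of $\CKR(D_1)$ and $\CKR(D_2)$ over $\C[x]/\partial w$, and under this identification the distinguished cocycle $\psi$ for $K_1 \# K_2$ is a unit multiple of $\psi \otimes \psi$, since on both sides it is the canonical generator of the $\sl_1$ summand attached to the simple root $x = 1$. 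The quantum filtration is additive under this tensor product; the subtlety is that the $x$-filtration combines additively only up to a truncation, and it is precisely this truncation that accounts for the $\leq 2t$ discrepancy at $t$ bounded away from $0$. The point to establish is that the truncation does not affect the first-order behaviour at $0$: for $t$ in a sufficiently small interval an optimal representative of $\psi$ for $K_1 \# K_2$ may be taken of the form (optimal representative for $K_1$) $\otimes$ (optimal representative for $K_2$), before the truncation becomes binding, so that the blended filtration---and hence its derivative at $0$---is additive on the nose. Carrying out this localisation, i.e.\ showing that an optimal representative for the connect sum genuinely factors in the relevant range of $t$, is the step I expect to be the main obstacle.

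Finally, for the value on torus knots I would compute $\upsln_n(T(p,q))(t)$ in a neighbourhood of $0$ directly from a positive braid presentation. For a positive knot $K$ the slice-Bennequin bound coming from a positive diagram is sharp, and one can identify an explicit representative of $\psi$ realising the blended filtration near $0$, giving $\upsln_n(T(p,q))'(0) = -(p-1)(q-1)/2 = -g_*(T(p,q))$; equivalently, one shows $\nu(K) = -g_*(K)$ for every positive knot, which together with the genus bound of the first paragraph pins down all torus knot values. The remaining subtlety here is to confirm that, for closures of positive braids, the $x$-filtration contributes exactly the expected amount to the leading-order term at $0$.
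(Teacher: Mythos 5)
Your outline correctly identifies the three things to check and correctly diagnoses the crux: the quasi-homomorphism bound of \cref{thm:quasi-homomorphism} only yields $|\nu(K_1\#K_2)-\nu(K_1)-\nu(K_2)|\leq 2$ in the limit $t\to 0^+$, so additivity has to come from a finer Künneth-type argument. You also correctly describe what such an argument would need to deliver (a factored optimal representative of $[\psi]$ for the connect sum, valid for $t$ near $0$). But you explicitly flag that step as ``the main obstacle'' without resolving it, and that step is exactly the content of the theorem; as written the proposal is therefore incomplete.

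The paper's resolution is a specific mechanism you have not supplied. One first observes that for $t$ small, the pivot of $\gamma(D)(t)$ is the point $(r,n-1)$, where
\[
r := \min \{ \ell : [\psi] \in \im(H^0(\F^\ell\, x^{n-1} \CKR(D)) \rightarrow \HKR^0(D)) \}
\]
is a filtration degree measured entirely inside the \emph{reduced} subcomplex $x^{n-1}\CKR(D)$ (the top level of the $x$-filtration). This gives the closed form $\upsln_n(D)'(0)=r-(n-1)$. The point is that the derivative at zero only ``sees'' this reduced subcomplex, and the awkward truncation in the $x$-filtration under tensor product simply never enters. One then shows, following the argument in \cite{lewarklobb}, that the connect-sum saddle map restricts to a map
\[
\widetilde{\Phi}:\bigl(x_1^{n-1}\CKR(D_1)\bigr)\otimes\bigl(x_2^{n-1}\CKR(D_2)\bigr)\longrightarrow x^{n-1}\CKR(D_1\#D_2)
\]
which is a filtered isomorphism of quantum degree $n-1$ (with inverse of degree $1-n$). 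This gives $r_1+r_2-(n-1)=r$ on the nose, hence exact additivity $\nu(K_1\#K_2)=\nu(K_1)+\nu(K_2)$ without any discussion of which representatives are optimal. Your picture of tracking optimal representatives and arguing the truncation ``does not become binding'' is morally consistent with this, but it is the identification $\upsln_n'(0)=r-(n-1)$ plus the isomorphism $\widetilde{\Phi}$ that actually closes the argument, and neither appears in your proposal. For the remaining slice-torus conditions your sketch (genus bound by dividing \cref{thm:slicegenusbounds} by $t$ and taking $t\to 0^+$; torus knot value from positive braid diagrams where $\psi$ is cohomologous only to itself) agrees with what the paper uses.
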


This result puts us in line with the derivative at $0$ of $\Upsilon(K)$ as mentioned in \cref{thm:upsilonfactoids}.  At $1$, on the other hand, we have the property of superadditivity

\begin{theorem}
	\label{thm:superadditiveat1}
	We have that
	\[ \upsln_n(K_1 \# K_2)(1) \geq \upsln_n(K_1)(1) + \upsln_n(K_2)(1) {\rm .} \]
\end{theorem}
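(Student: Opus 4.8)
The plan is to deduce the inequality from the behaviour of $\CKR$ under connect sum. Recall that $\upsln_n(K)(1)$ is, up to the overall factor $-\tfrac12$ and a normalisation constant depending only on the diagram, the minimum over all cocycles $\psi' \in \CKR^0(D)$ cohomologous to $\psi$ of a fixed functional of the quantum filtration level of $\psi'$ and its $x$-filtration level. The first step is to recall, or establish, a connect-sum formula identifying $\CKR(D_1 \hash D_2)$ up to filtered chain homotopy equivalence with the tensor product $\CKR(D_1) \otimes_{\C[x]/\partial w} \CKR(D_2)$, compatibly with \emph{both} filtrations in play: the quantum filtration $\F$, which is additive under $\otimes$, and the $x$-filtration. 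The second step is to pin down the distinguished cocycle of the connect sum: since the $x = 1$ eigenline is one-dimensional in cohomology and multiplicative under the K\"unneth map (the tensor of a $1$-eigenvector with a $1$-eigenvector is a $1$-eigenvector), the cocycle $\psi_{12}$ for $D_1 \hash D_2$ is cohomologous to $\psi_1 \otimes \psi_2$, up to a non-zero scalar which does not affect filtration levels.

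With these two ingredients in hand the argument is the standard ``product representative'' estimate. Choose cocycles $\psi_1'$ and $\psi_2'$, cohomologous to $\psi_1$ and $\psi_2$, realising the minima defining $\upsln_n(K_1)(1)$ and $\upsln_n(K_2)(1)$. Then $\psi_1' \otimes \psi_2'$ is a cocycle in cohomological degree $0$, cohomologous to $\psi_{12}$, so it is one of the cochains competing in the minimum that defines $\upsln_n(K_1 \hash K_2)(1)$. The key point is that the functional evaluated on $\psi_1' \otimes \psi_2'$ is at most its value on $\psi_1'$ plus its value on $\psi_2'$ (up to the normalisation bookkeeping): the quantum filtration level of $\psi_1' \otimes \psi_2'$ is the sum of those of the factors, while the $x$-filtration level of $\psi_1' \otimes \psi_2'$ is at least $\min(k_1 + k_2,\, n - 1)$, where $k_i$ is the maximal power of $x$ dividing $\psi_i'$. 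Equivalently, the ``$x$-defect'' $n - 1 - k$ is subadditive, not merely additive, under the tensor product, and this is forced precisely by the relation $x \cdot x^{n-1} = x^{n-1}$ coming from the potential $\partial w = x^n - x^{n-1}$; it is exactly this cap that produces a one-sided inequality rather than an equality. Passing to the minimum over all representatives of $[\psi_{12}]$ and multiplying through by $-\tfrac12$ converts the resulting subadditivity of the minimum into the asserted superadditivity of $\upsln_n(\cdot)(1)$. The normalisation constants combine correctly because connect-summing with the unknot is the identity operation and connect sum merges one Seifert circle; this is the routine part of the bookkeeping.

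The step I expect to be the main obstacle is the first one. The clean connect-sum and K\"unneth statements available in the literature are proved for potentials that split into distinct linear factors, whereas $\partial w = x^n - x^{n-1}$ is non-semisimple for $n \geq 3$; so I would need to verify that the filtered tensor-product description survives in this setting and, the more delicate point, to nail down the behaviour of the $x$-filtration -- which is a genuine filtration and not a grading -- under the tensor product, including the capping phenomenon above. Once that compatibility is secured the remainder is a finite bookkeeping exercise with filtration levels and the normalisation constant.
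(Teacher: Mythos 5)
Your argument has a sign error that sends the inequality in the wrong direction, and the root of it is a misremembered normalization. Note first that at $t=1$ the mixed filtration $\mathcal{G}_1$ coincides with the quantum filtration $\F$: the condition $\ell \geq t(k+j)-k$ becomes $\ell \geq j$ when $t=1$, so the $x$-filtration level $k$ drops out entirely. Hence your discussion of the ``$x$-defect'' and the capping phenomenon $x\cdot x^{n-1}=x^{n-1}$ is irrelevant to the endpoint value $\upsln_n(K)(1)$. What remains is $\gamma(D)(1)=u(D)$, the minimal $\F$-level of a cocycle representing $[\psi(D)]$, and the normalization is $\upsln_n(K)(1)=\frac{1}{2(n-1)}\bigl[u(D)-(n-1)\bigr]$ --- a \emph{positive} multiple of $u(D)$, not $-\tfrac{1}{2}$ of it. Your ``product representative'' estimate --- push $\psi_1'\otimes\psi_2'$ forward under the saddle $\CKR(D_1\sqcup D_2)\to\CKR(D_1\#D_2)$ of filtered degree $n-1$ --- therefore gives an \emph{upper} bound $u(D_1\#D_2)\leq u(D_1)+u(D_2)+(n-1)$, which after renormalizing reads $\upsln_n(K_1\#K_2)(1)\leq\upsln_n(K_1)(1)+\upsln_n(K_2)(1)+1$: a subadditivity statement, not the claimed superadditivity.

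The ingredient you are missing is the \emph{dual} saddle $h:\CKR(D_1\#D_2)\to\CKR(D_1\sqcup D_2)$, which also has quantum filtered degree $n-1$ and sends $\psi(D_1\#D_2)$ to a nonzero multiple of $\psi(D_1)\otimes\psi(D_2)$. Applying $h$ to a minimizing representative on the connect-sum side yields the \emph{lower} bound $u(D_1)+u(D_2)=u(D_1\sqcup D_2)\leq u(D_1\#D_2)+(n-1)$, and that is exactly what becomes $\upsln_n(K_1)(1)+\upsln_n(K_2)(1)\leq\upsln_n(K_1\#K_2)(1)$ after renormalizing. This is the route the paper takes. So the fix is structural, not cosmetic: run the filtered estimate through the dual saddle rather than building a representative on the connect-sum side, and drop the $x$-filtration considerations, which play no role at $t=1$.
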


Both $\upsln_n(K)'(0)$ and $\upsln_n(K)(1)$ are new knot invariants in their own right. They are similar to concordance invariants considered in \cite{lewarklobb} coming from reduced and unreduced cohomology, respectively.
However, all previously defined $\SL_n$ concordance invariants stem from cohomology with a separable potential, in contrast to the new invariants from $\SL_n$ cohomology with potential $x^n - x^{n-1}$.

\Cref{thm:superadditiveat1}
can be compared with the superadditive property of the bound on $\gamma_4$ arising from $\varphi$ discussed in the previous subsection.  In fact, let us now turn to the question of bounds on the smooth $4$-ball genus.

\begin{theorem}
	\label{thm:slicegenusbounds}
	For any $0 < t \leq 1$ we have that
	\[ g_*(K) \geq \left\vert \frac{\upsln_n(K)(t)}{t} \right\vert {\rm .} \]
\end{theorem}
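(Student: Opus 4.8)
The plan is to imitate the standard slice-genus bound from Khovanov--Rozansky cohomology, but carried out in the doubly-filtered setting that defines $\upsln_n$. Recall that $\upsln_n(K)(t)$ is obtained by minimizing, over cocycles $\psi'$ cohomologous to the distinguished cocycle $\psi$, a linear combination (with weights depending on $t$) of the quantum filtration level and the $x$-filtration level of $\psi'$. So fix $t \in (0,1]$ and fix a representative cocycle $\psi'$ which attains (or nearly attains, up to $\epsilon$) the value defining $\upsln_n(K)(t)$. I would first establish the bound for a knot $K$ that is \emph{slice}, showing that in that case $\upsln_n(K)(t) = 0$ for all $t$; then upgrade to the genus bound by a cobordism/connected-sum argument.

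For the slice case, let $\Sigma \subset B^4$ be a smooth slice disk for $K$. As in the usual construction of slice-torus invariants from $\sl_n$ cohomology, removing a small ball around an interior point of $\Sigma$ yields a cobordism from the unknot $U$ to $K$ inside $S^3 \times [0,1]$, inducing a chain map (up to filtered homotopy) $\Phi : \CKR(U) \to \CKR(D)$ which shifts the quantum filtration by a controlled amount computed from the Euler characteristic of the cobordism, and which is an isomorphism on cohomology after passing to the $x=0$ and $x=1$ summands. I would check two things: (i) $\Phi$ is filtered with respect to \emph{both} the quantum and the $x$-filtration — the $x$-filtration part is because the elementary cobordism maps (Reidemeister, Morse moves) are all $\C[x]/\partial w$-module maps, hence preserve multiplication by $x$ — and (ii) $\Phi$ sends the distinguished cocycle $\psi_U$ generating the $\sl_1$ summand for $U$ to (a cocycle cohomologous to) $\psi$ for $K$, up to the appropriate quantum shift. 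Granting these, the filtered chain map and its filtered homotopy inverse together pin down $\upsln_n(K)(t)$ to equal $\upsln_n(U)(t)$, which is identically $0$ because for the unknot $\psi_U$ itself realizes the minimum at every filtration level.

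For a general knot $K$ of slice genus $g = g_*(K)$, I would use the standard trick: $K \# (-K^r)$ (or rather, a suitable Seifert-surface argument) bounds a genus-$g$ surface, equivalently there is a genus-$g$ cobordism in $S^3 \times [0,1]$ from $U$ to $K$. The induced filtered chain map now shifts the quantum filtration by an amount linear in $2g$ (from the Euler characteristic $\chi = 1 - 2g$ of the cobordism, each of the $2g$ index-$1$ critical points contributing a degree shift of the same size as in the Rasmussen-style argument), and similarly for the reverse cobordism from $K$ to $U$. Composing the two maps is, on the relevant summand, multiplication by $x^{2g \cdot ?}$ up to filtered homotopy — more precisely, the composite of the two saddle-heavy cobordism maps acts as a power of $x$ on the $\sl_1$-type generator — which means $\psi$ for $K$ is cohomologous to an element whose quantum and $x$-filtration levels differ from those for $U$ by amounts bounded linearly in $g$. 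Feeding this into the definition of $\upsln_n$ and dividing by $t$ yields $|\upsln_n(K)(t)/t| \leq g_*(K)$, and by applying the same argument to the mirror $-K$ (which has the same slice genus) one gets the two-sided bound.

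The main obstacle I anticipate is step (ii) above together with its genus-$g$ analogue: controlling precisely how the cobordism maps act on the distinguished cocycle $\psi$, and in particular verifying that the composite $K \to U \to K$ acts as multiplication by the expected power of $x$ on the $\sl_1$ generator rather than merely being filtered. This is where the non-semisimplicity of the potential $\partial w = x^n - x^{n-1}$ genuinely enters — for a product of distinct linear factors the analogous statement would be classical, but here one must track how the multiplicity-$(n-1)$ root at $x=0$ and the simple root at $x=1$ interact under the cobordism maps, and in particular argue that the $\sl_1$ summand detaches sufficiently cleanly from the $\sl_{n-1}$ part to make the power-of-$x$ bookkeeping work. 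I would handle this by localizing at $x=1$ (inverting $x$, or equivalently inverting $1-x$ and working near the other root), where $\CKR$ becomes a genuine $\sl_1$-type theory in which the cobordism maps are transparent, and then transferring the filtration estimate back.
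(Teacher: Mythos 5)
Your overall strategy — use the cobordism map in both directions, track both filtrations, and show that $\psi$ is carried to $\psi$ — is exactly the one the paper uses (through \Cref{prop:concordance_invariance}, \Cref{prop:conc_invari_2}, and \Cref{prop:genus_cobordism_bounds}). However, two of your proposed steps would not go through as stated.

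First, the localization plan for your ``main obstacle'' is a dead end. If you invert $x$ (equivalently, pass to the localization at the simple root $x=1$), then $x$ becomes a unit, so the $x$-filtration $\{0\}\subseteq x^{n-1}C\subseteq\cdots\subseteq C$ collapses entirely to the trivial filtration. There is nothing left to ``transfer back''. The paper avoids localization and instead controls the $x$-filtration degree of the cobordism map directly: in \Cref{prop:conc_invari_2}, one fixes a generic arc on the cobordism joining basepoints of $D_0$ and $D_1$ and observes that the cobordism map commutes with the $x$-action at the running basepoint on either side of each elementary move. This yields that $\Sigma_*$ has $x$-filtration degree $0$, which is the precise statement you need; your appeal to ``module maps'' is morally the same idea but omits the basepoint bookkeeping that makes it valid.

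Second, the claim that the composite $K\to U\to K$ acts as ``multiplication by $x^{2g}$ on the $\sl_1$ generator'' is both not needed and not informative. Since $\psi$ is a $1$-eigenvector of $x$, one has $x^{2g}\psi=\psi$, so this gives no filtration information. What is actually needed, and what the paper proves, is the filtration degree of each one-way cobordism map: degree $0$ in the $x$-filtration (as above) and degree $2(n-1)g$ in the quantum filtration (from the standard $\sl_n$ count on elementary cobordisms). Feeding the shift $(j,k)\mapsto(j+2(n-1)g,\,k)$ into \Cref{defn:slopingfiltration} gives immediately that $\Sigma_*$ shifts the $\G_t$-degree by $2t(n-1)g$; combined with $\Sigma_*\langle[\psi(K_0)]\rangle=\langle[\psi(K_1)]\rangle$ and the corresponding estimate for the reversed cobordism, one gets $\lvert\gamma(K_0)(t)-\gamma(K_1)(t)\rvert\leq 2t(n-1)g$, which, after normalizing by \Cref{defn:gimel} and taking $K_1=U$, is the theorem. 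This is in fact the content of the more general \Cref{prop:genus_cobordism_bounds}, from which the statement follows at once.
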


This is in direct analogy with the property \ref{upsilon:3} of $\Upsilon$ given in \cref{thm:upsilonfactoids} and is implied by the next, more general, proposition.

\begin{proposition}
	\label{prop:genus_cobordism_bounds}
	For two knots $K_0$, $K_1$, we write $g_*(K_0, K_1)$ for the minimal genus of a knot cobordism from $K_0$ to $K_1$.  Then we have that
	\[ g_*(K_0,K_1) \geq \frac{1}{t}\left\vert \upsln_n(K_0)(t) - \upsln_n(K_1)(t) \right\vert {\rm .} \]
\end{proposition}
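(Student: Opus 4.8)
The plan is to prove the cobordism bound directly at the chain level, then deduce the slice genus bound (\cref{thm:slicegenusbounds}) by taking $K_1$ to be the unknot, for which $\upsln_n$ vanishes. The key input is functoriality: a genus-$g$ cobordism $\Sigma$ from $K_0$ to $K_1$, presented by a movie of Reidemeister moves and births/deaths/saddles, induces a filtered chain map $f_\Sigma \colon \CKR(D_0) \to \CKR(D_1)$ (well-defined up to filtered chain homotopy), and one must track how $f_\Sigma$ interacts with \emph{both} the quantum filtration and the $x$-filtration. The quantum-filtration behaviour is the standard one: each elementary cobordism shifts the quantum grading by a controlled amount, and the total shift along $\Sigma$ is bounded below by something like $-(n-1)\cdot 2g$ (after the renormalisation built into $\upsln_n$; I would extract the precise constant from the adjunction-type computation already implicit in the discussion of the slice--Bennequin bound in the introduction). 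The $x$-filtration is preserved on the nose by the maps induced by Reidemeister moves and by the ``merge/split'' pieces that are $\C[x]/\partial w$-module maps, since the $x$-filtration is just the $x$-adic filtration and these are module homomorphisms; births and deaths, which insert or delete a tensor factor, may shift the $x$-filtration, but again by a bounded amount governed by $g$.

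Concretely, I would fix the cocycle $\psi_0 \in \CKR(D_0)^0$ generating the $\sl_1$ summand and consider $f_\Sigma(\psi_0)$. Because $f_\Sigma$ respects the eigenvalue decomposition of the $x$-action up to filtered homotopy (the $\sl_1$ summand corresponds to the simple root $x=1$, which is separated from the multiplicity-$(n-1)$ root $x=0$ by an idempotent in $\C[x]/\partial w$), $f_\Sigma(\psi_0)$ is cohomologous to a nonzero multiple of $\psi_1$, the analogous generator for $D_1$. Now $\upsln_n(K_i)(t)$ is, by construction, the blend at parameter $t$ of the minimal quantum filtration level and the minimal $x$-filtration level over all cochains cohomologous to $\psi_i$. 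Given a cochain $\psi_0'$ cohomologous to $\psi_0$ that is near-optimal for the blended filtration at $K_0$, the element $f_\Sigma(\psi_0')$ is cohomologous to (a multiple of) $\psi_1$ and its blended filtration level at parameter $t$ is at least that of $\psi_0'$ minus an error of the form $t\cdot(\text{quantum shift})+(1-t)\cdot(\text{$x$-shift})$, each shift bounded by a constant times $g$. Running the same argument with $\Sigma$ reversed (reading the movie backwards) gives the opposite inequality, and combining the two yields
\[
\left| \upsln_n(K_0)(t) - \upsln_n(K_1)(t) \right| \leq t \cdot g_*(K_0,K_1),
\]
after checking that the constants multiplying $g$ in front of the quantum and $x$ shifts conspire, under the normalisation defining the blend, to give exactly the coefficient $t$ (and not $t$ plus an $(1-t)$-term) — this is the point where the specific way the two filtrations are blended to produce $\upsln_n$ must be used, and it is presumably why the statement has $t$ rather than a larger multiple.

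The main obstacle I anticipate is precisely this bookkeeping of the $x$-filtration under the non-module-map pieces of a cobordism — births and deaths — and confirming that their contribution, when weighted by $(1-t)$ in the blend, is dominated by or absorbed into the $t g$ bound rather than producing a stray $(1-t)g$ term. A clean way to handle this is to choose the cobordism movie so that all births and deaths occur at the very start and very end (so they only affect the unknot components being created or cancelled and do not interact with $\psi$), reducing the genus-contributing part of the movie to Reidemeister moves and saddles, all of which are $\C[x]/\partial w$-module maps and hence strictly $x$-filtered; then every genus-dependent loss is purely in the quantum filtration, weighted by $t$, which gives the clean bound. A secondary technical point is verifying the claim that $f_\Sigma$ respects the $x$-eigenspace splitting up to filtered homotopy — this should follow from naturality of the idempotent decomposition of $\C[x]/\partial w = \C[x]/(x^{n-1}) \times \C[x]/(x-1)$ with respect to all the elementary cobordism maps, since those maps are defined over $\C[x]/\partial w$.
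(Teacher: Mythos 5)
Your proposal follows essentially the same route as the paper: use the induced cobordism map to move the class $[\psi]$ from $K_0$ to $K_1$, show that this map shifts the quantum filtration by (at most) a multiple of $g$ but preserves the $x$-filtration, combine the two into a $\mathcal{G}_t$-degree shift of $2t(n-1)g$ (which gives the factor of $t$ and no stray $(1-t)g$ term), and then run the argument with the cobordism turned upside down to get the two-sided bound. The one place where your sketch is looser than the paper is in justifying that the $x$-filtration degree does not move: you suggest pushing births and deaths to the ends of the movie and observing that the remaining maps are $\C[x]/\partial w$-module maps, but to make sense of ``the same $x$-action'' on both sides of a saddle one must track a basepoint through the cobordism. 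The paper handles this cleanly in \cref{prop:conc_invari_2}: choose a generic arc on $\Sigma$ joining the basepoints of $D_0$ and $D_1$; this gives a coherent family of intermediate basepoints, and multiplication at these basepoints commutes with every elementary cobordism map, so $\Sigma_*$ has $x$-filtration degree~$0$. This is the one technical step you would need to spell out to turn your sketch into a complete proof; the rest of your plan (including the role of the idempotent decomposition of $\C[x]/\partial w$ in ensuring $[\psi]$ maps to a nonzero multiple of $[\psi]$, as in \cref{prop:1handle_psi_preserved} and \cref{prop:concordance_invariance}) matches the paper's argument.
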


It shall turn out that it really is necessary to upgrade from Khovanov 
cohomology to $\SL_n$ Khovanov-Rozansky cohomology in order to obtain a 
non-trivial invariant.  Khovanov cohomology is equivalent to the case $n=2$ of 
Khovanov-Rozansky cohomology, and the well-known slice-torus invariant arising 
from Khovanov cohomology is just a scalar multiple of Rasmussen's invariant 
$s(K)$.

\begin{proposition}
	\label{prop:sl2isboring}
	We have
	\[ \upsln_2(K)(t) = \frac{-s(K)}{2}t {\rm .} \]
\end{proposition}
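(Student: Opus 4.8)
The plan is to show that when $n = 2$ the $x$-filtration carries no information not already visible in the quantum filtration, so that $\upsln_2(K)$ is forced to be linear, and then to identify its slope using \cref{thm:derivativeatzero}. The starting observation is the special algebra of the case $n = 2$: here $\partial w = x^2 - x = x(x - 1)$ is a product of distinct linear factors, $\CKR(D)$ is a deformation of Khovanov cohomology of Lee/Bar--Natan type, and $e_1 := x \in \C[x]/(x^2 - x)$ is the idempotent associated to the simple root $x = 1$. The key point is that the operator ``multiplication by $x$'' on $\CKR^*(D)$ is precisely the projection onto the summand $e_1\CKR^*(D) = x\CKR^*(D)$, which for $n = 2$ is also the only non-trivial step of the $x$-filtration (since $n - 1 = 1$). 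Two properties of this operator will be used: it is a cochain map with $x\psi = \psi$ (as $\psi$ is the $1$-eigenvector of $x$), and it does not raise the quantum filtration (it is filtered of non-positive degree).

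From this I would deduce that the bifiltered picture of the class $[\psi]$ degenerates. Let $\psi'$ be a cocycle cohomologous to $\psi$ of minimal quantum filtration, say $\psi' \in \F^{q_0}\CKR^0(D)$ with $q_0$ as small as possible. Then $x\psi'$ is again a cocycle cohomologous to $\psi$; it lies in $x\CKR^0(D)$, so has maximal $x$-filtration; and it lies in $\F^{q_0}\CKR^0(D)$ as well, hence has quantum filtration exactly $q_0$ by minimality. So the minimal quantum filtration of a representative of $[\psi]$ is already realised by a representative of maximal $x$-filtration; equivalently, the ``staircase'' of $[\psi]$ in the (quantum, $x$)-bifiltration has no genuine corner. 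Unwinding the definition of $\upsln_n$ from this, $\upsln_2(D)(t)$ comes out as an affine function of $t \in [0,1]$; since $\upsln_2(D)(0) = 0$ by the normalisation built into the construction (as with $\Upsilon$), $\upsln_2(D)$ is linear, $\upsln_2(K)(t) = \upsln_2(K)'(0)\,t$.

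It then remains to identify the slope. By \cref{thm:derivativeatzero}, $\upsln_2(K)'(0)$ is a slice-torus invariant; concretely it is the invariant extracted from the potential $\partial w = x^2 - x$, which over $\C$ is the Lee (equivalently Bar--Natan) deformation, so up to normalisation it is Rasmussen's $s$. With the conventions in force here --- under which positive knots carry negative quantum gradings, cf.\ the discussion after \cref{defn:slicetorus} --- the normalisation is $-s(K)/2$, and hence $\upsln_2(K)(t) = -\tfrac{s(K)}{2}\,t$.

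I expect the main obstacle to be the translation in the second paragraph: extracting from the definition of $\upsln_n$ that ``$q_0$ is realised by an $x$-divisible representative'' forces $\upsln_2$ to have no breakpoint. This degeneration is genuinely special to $n = 2$: there $e_1 = x$, so projecting onto the $x = 1$ summand coincides with multiplication by $x$, whereas for $n \ge 3$ one has $e_1 = x^{n-1}$ and the summand attached to the multiplicity-$(n-1)$ root $x = 0$ --- on which $x$ acts nilpotently rather than as zero --- supports a non-trivial $x$-filtration, which is exactly what makes $\upsln_n$ non-trivial for $n \ge 3$.
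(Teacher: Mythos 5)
The proposal has a genuine gap at its central step. You claim that multiplication by $x$ on $\CKR^*(D)$ is ``filtered of non-positive degree,'' and you use this to argue that a quantum-filtration--minimal representative $\psi' \in \F^{q_0}\CKR^0(D)$ of $[\psi]$ gives an $x$-divisible representative $x\psi'$ still lying in $\F^{q_0}$. But in the paper's conventions (increasing filtration $\F^j \subseteq \F^{j+1}$, with the generator $x$ of $R = \C[x]/(x^2-x)$ carrying \emph{positive} quantum degree --- see e.g.\ the proof of \cref{prop:slope_vs_value_at_1}, where the map $\CKR \xrightarrow{x^{n-1}} x^{n-1}\CKR$ is explicitly noted to be filtered of degree $2(n-1)$, so $x$ has degree $+2$), multiplication by $x$ is a filtered map of degree $+2$, not $0$: it sends $\F^j$ into $\F^{j+2}$, and in general it strictly \emph{raises} the filtration level (for instance it sends the generator $1$, in filtration level $\deg 1$, to $x$, in level $\deg 1 + 2$). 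So all you can deduce from $x\psi' \in \F^{q_0+2} \cap x\CKR^0(D)$ is the two-sided bound $u \leq r \leq u+2$ on the two relevant filtration levels (the unreduced $u$ and reduced $r$), which holds for every $n$ and does not by itself rule out a breakpoint. The fact that $r = u$ when $n=2$ --- equivalently, that the unreduced and reduced Rasmussen gradings agree, so that both ``pivot points'' in the graphical description of $\gamma$ have the same first coordinate --- is a genuine input about Bar--Natan/Lee theory. The paper's proof obtains it not by an abstract degree argument about $x$-multiplication, but by identifying $u$ and $r$ separately: $\HKR^0(D)$ is $2$-dimensional with generators in quantum filtration levels differing by $2$, and the filtration level of $[\psi]$ (and of the single generator of $H(x\CKR(D))$) is pinned down by Rasmussen's $s(K)$ in both the unreduced and reduced settings, giving $r=u$ directly and hence linearity.

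A secondary point: your identification of the slope via \cref{thm:derivativeatzero} plus the assertion that the slice-torus invariant from $\partial w = x^2 - x$ is $-s(K)/2$ is not self-contained; the latter is itself a nontrivial identification. The paper avoids invoking it by reading off the slope from the explicit values of $r=u$ in terms of $s(D)$, so you would still need to supply that identification, or follow the paper's explicit computation, even after the gap above is repaired.
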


Of course, this implies for quasi-alternating knots $K$ that we have
\[ \upsln_2(K)(t) = \frac{- \sigma(K)}{2}t \]
where $\sigma$ is the classical knot signature.  It is a weakness of $\Upsilon$ that it contains no more information than $\sigma$ when applied to quasi-alternating knots, \cref{prop:sl2isboring} shows that $\upsln_2$ suffers from a similar weakness.  We shall see too that $\upsln_n$ is in general uninteresting for some classes of knots (in particular torus knots) for which $\Upsilon$ can be interesting.

\begin{proposition}
	\label{prop:boring_for_sqzd_knots}
	For any knot $K$ which is either quasi-positive, quasi-negative, or homogeneous, $\upsln_n(K)$ is linear.
\end{proposition}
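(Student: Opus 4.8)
The plan is to show that for each of these three classes of knot there is a single cochain cohomologous to $\psi$ which \emph{simultaneously} sits at the minimal level of the quantum filtration and at the minimal level of the $x$-filtration; once that is established, linearity of $\upsln_n(K)$ is automatic. Recall that $\upsln_n(K)(t)$ is obtained as the minimum, over cochains $\psi'\in C_{\partial w}^0(D)$ cohomologous to $\psi$, of a blended quantity of the shape $(1-t)\,a(\psi')+t\,b(\psi')$, where $a(\psi')\geq 0$ is the renormalised $x$-filtration level and $b(\psi')$ the renormalised quantum filtration level. Because $\psi=x^{n-1}\psi$ lies in the smallest step $x^{n-1}C_{\partial w}^0(D)$ of the $x$-filtration we have $a(\psi)=0$; hence $\upsln_n(K)(0)=0$ for every knot, and the only possible source of nonlinearity is that the minimal quantum filtration level of $[\psi]$ fails to be realised by a cochain which is also optimal for the $x$-filtration. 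In that situation $\upsln_n(K)(t)$ equals $\bigl(\upsln_n(K)(1)\bigr)\,t$, so the whole problem reduces to producing such a cochain.

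First I would treat quasi-positive knots. Let $g=g_*(K)$ and take a diagram $D$ coming from a quasi-positive braid word. By Rudolph's theorem the slice--Bennequin bound arising from $D$ is sharp, that is, it equals $g$; as recalled in the introduction, this bound is precisely the quantum filtration grading of the cocycle $\psi$, so $b(\psi)=-g$ in our normalisation. Together with $a(\psi)=0$ this gives $\upsln_n(K)(t)\leq (1-t)\cdot 0+t\cdot(-g)=-gt$ for all $t\in[0,1]$, while \cref{thm:slicegenusbounds} gives $\upsln_n(K)(t)\geq -g_*(K)t=-gt$ on $(0,1]$, with equality trivial at $t=0$. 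Hence $\upsln_n(K)(t)=-gt$, which is linear (and consistent with \cref{thm:derivativeatzero}, since every slice-torus invariant takes the value $-g$ on a quasi-positive knot).

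For quasi-negative knots one would argue by mirror symmetry: first record the relation $\upsln_n(\bar K)(t)=-\upsln_n(K)(t)$, which follows from the duality between the Khovanov--Rozansky complex of a knot and that of its mirror in the same way that $s(\bar K)=-s(K)$ does; since the mirror $\bar K$ of a quasi-negative knot $K$ is quasi-positive, the previous case gives $\upsln_n(\bar K)(t)=-g_*(\bar K)t$ and therefore $\upsln_n(K)(t)=g_*(K)t$, again linear. (The direct argument on a quasi-negative braid diagram does not work unchanged, because there $\psi$ itself is no longer the optimal representative, so the mirror relation is the clean route.)

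The plan for homogeneous knots -- and this is where I expect the main difficulty -- is to work with a homogeneous diagram $D$: Seifert's algorithm applied to $D$ produces a minimal-genus surface, and the Seifert graph of $D$ decomposes into blocks of constant sign (Cromwell). Now the slice--Bennequin bound of $D$ need no longer be sharp, so $\psi$ itself need not be optimal for the quantum filtration; instead one would use the block decomposition to exhibit a cochain $\psi'$ cohomologous to $\psi$ with $a(\psi')=0$ and quantum level $b(\psi')=-\nu(K)$, where $\nu$ is the common value of all slice-torus invariants on homogeneous knots (as computed in \cite{kawamura,lobb5,lew2}), together with the matching lower bound $b(\psi'')\geq -\nu(K)$ for every $\psi''$ cohomologous to $\psi$, this latter bound coming from the negative blocks of $D$ rather than from \cref{thm:slicegenusbounds}. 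Since $a\geq 0$, this forces $\upsln_n(K)(t)=-\nu(K)t$. The obstacle is that, unlike in the quasi-positive case, there is no single sharp slice--Bennequin inequality to lean on -- indeed $|\nu(K)|$ can be strictly smaller than $g_*(K)$ already for alternating $K$, so \cref{thm:slicegenusbounds} by itself does not even supply the required lower bound -- and one has to analyse the filtered Khovanov--Rozansky complex of a homogeneous diagram finely enough to produce the optimal representative and its matching lower bound at the same time. A uniform alternative covering all three classes is to show that the indecomposable summand $S_n(K)$ of the equivariant complex $C_{U(n)}(D)$ is a shifted free module of rank one for these knots, after which linearity of $\upsln_n(K)$ is immediate from the discussion of equivariant cohomology.
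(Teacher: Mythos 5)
The reduction in your opening paragraph is sound: since $\psi$ always lies in the top step $x^{n-1}C_{\partial w}^0(D)$ of the $x$-filtration, exhibiting a cochain cohomologous to $\psi$ that is simultaneously optimal for both filtrations yields $\upsln_n(K)(t)\leq t\,\upsln_n(K)(1)$, which combines with \cref{prop:supportinacone} to force linearity. Your quasi-positive argument is then correct and in fact cleaner than what the paper does for that case: from a quasi-positive braid diagram the slice--Bennequin bound is sharp, so the single cochain $\psi$ already gives $\upsln_n(K)(t)\leq -g_*(K)\,t$, and \cref{thm:slicegenusbounds} promotes this to equality.

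The other two cases are genuinely gapped. For quasi-negative knots you invoke $\upsln_n(\bar K)(t)=-\upsln_n(K)(t)$, but this relation is neither stated nor proved in the paper, and it is not automatic: $\upsln_n$ is only a \emph{quasi}-homomorphism, so mirroring need not negate it (superadditivity of $\upsln_n(\cdot)(1)$ together with the $\G_t$-degree of the unknot only gives $\upsln_n(\bar K)(1)\leq -\upsln_n(K)(1)$, and \cref{prop:slope_vs_value_at_1} leaves a gap of size one). A duality argument would need to track both the quantum and the $x$-filtration through dualizing the cochain complex, which you have not supplied. For homogeneous knots you concede the argument is a plan rather than a proof, and the fallback you suggest --- that $S_n(K)$ is free of rank one --- is not proved either and would be strictly stronger than linearity of $\upsln_n(K)$. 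The paper sidesteps all of this with a single uniform mechanism: each such $K$ occurs as a slice of a minimal-genus cobordism between a positive torus knot $T_+$ and a negative torus knot $T_-$ (the squeezedness result cited from \cite{lew2}); $\upsln_n$ is shown to be linear of slope $\mp g_*$ on torus knots (for positive torus knots because the complex is supported in non-negative cohomological degree so $\psi$ has no other representative, for negative ones by comparing $T(p,-q)$ with $T(p,-1)$ and then appealing to \cref{thm:derivativeatzero}); and then the triangle inequality applied to the cobordism-genus bound of \cref{prop:genus_cobordism_bounds} along the chain $T_+\rightarrow K\rightarrow T_-$ is saturated, pinning $\upsln_n(K)(t)$ to the straight line between $\upsln_n(T_+)(t)$ and $\upsln_n(T_-)(t)$.
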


On the other hand, we find interesting (in other words non-linear) values even of $\upsln_3$ on quasi-alternating knots.  This is in contrast to $\Upsilon$ or to $\varphi$, neither of which can distinguish a quasi-alternating knot from the $(2,2n+1)$ torus knot of the same signature.  As an example of the power of $\upsln_n$, we have the following result.

\begin{proposition}
	\label{prop:existence_of_knot_beating_upsilon_etcetera}
	There exists a knot $K$ on which $\varphi$, $\Upsilon$, and all known slice-torus invariants are trivial, but which cannot be slice since, for example, $\upsln_3(K) \not= 0$.
\end{proposition}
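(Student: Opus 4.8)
The plan is to exhibit an explicit knot $K$, found by a computer search over knots of low crossing number, for which the non-vanishing of $\upsln_3(K)$ is the \emph{only} surviving obstruction to sliceness. By \cref{prop:boring_for_sqzd_knots} such a $K$ cannot be quasi-positive, quasi-negative, or homogeneous, and since we also want every slice-torus invariant to vanish (so in particular $\upsln_3(K)'(0) = 0$), the function $\upsln_3(K)$ has to be genuinely non-linear. So the search is for a knot that is ``thick'' enough for $\upsln_3$ to detect something, yet whose classical and Floer-theoretic concordance invariants all vanish.

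First I would compute $\upsln_3(K)$ directly: set up the Khovanov--Rozansky complex $C_{\partial w}^* (D)$ with $\partial w = x^3 - x^2$, equipped with both the quantum filtration $\F$ and the $x$-filtration, locate the cocycle $\psi$, and for each slope determine the minimal value of the blended filtration attained by a cocycle cohomologous to $\psi$. The bifiltered complex is computed by \khoca{}; the delicate point is the filtered Gaussian elimination needed to read off these minimal filtration levels, since (as stressed in the introduction) the minima are typically not realised by $\psi$ itself, so one needs a \emph{certifiably} minimal filtered representative rather than just an upper bound. From the resulting piecewise linear function one extracts some $t_0 \in (0,1]$ with $\upsln_3(K)(t_0) \neq 0$; by \cref{thm:slicegenusbounds} this forces $g_*(K) \geq 1$, so $K$ is not slice.

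Second I would verify the vanishing of everything else. Since $K$ has few crossings, $\tau(K)$, $\Upsilon(K)$ and $\varphi(K)$ are read off from a computation of $CFK^\infty(K)$, giving $\tau(K) = 0$, $\Upsilon(K) \equiv 0$ and $\varphi(K) = 0$. The presently known slice-torus invariants are Rasmussen's $s$, its $\sl_n$ analogues $s_{n,\mathbb{F}}$ over various fields $\mathbb{F}$, and the instanton invariant $s^\#$; the first two are computed with \khoca{} (and $s(K) = 0$ is in any case forced by $\upsln_2(K) \equiv 0$ together with \cref{prop:sl2isboring}), while $s^\#(K) = 0$ is argued separately -- for the knots likely to emerge from the search this is already known, or follows from the vanishing of $\tau$ and $s$. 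An alternative way to arrange $\Upsilon(K) \equiv 0$ and $\varphi(K) = 0$ would be to take $K$ to be a suitable connected sum, using that $\varphi$ is subadditive and that $\upsln_n(\cdot)(1)$ is superadditive (\cref{thm:superadditiveat1}); but the $\leq 2t$ error in the quasi-homomorphism property (\cref{thm:quasi-homomorphism}) means a genuine computational input is still required somewhere.

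The main obstacle is the $\upsln_3$ computation itself, together with the bookkeeping hidden in the phrase ``all known slice-torus invariants''. On the computational side one must work with a bifiltered complex and carry out the filtered homological algebra carefully enough to be certain that the output is truly non-linear, and not an artefact of an inexact reduction or an incorrect identification of the cocycle $\psi$. On the bookkeeping side one must pin down precisely which slice-torus invariants are ``known'', and for each of them either compute its value on $K$ or cite a result establishing that it vanishes -- in particular handling the instanton invariant $s^\#$, which is not as readily computable as the Khovanov--Rozansky invariants.
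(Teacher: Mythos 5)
Your overall architecture -- compute $\upsln_3(K)$ explicitly for a carefully chosen knot, then verify vanishing of the Floer-theoretic and slice-torus invariants -- matches the paper's, but the path you propose through the second half is considerably more laborious than what the authors actually do, because you miss the structural trick they rely on. The paper takes the explicit knot $K = P(7,-5,4)\#P(-9,7,-6)$ (Figure~\ref{fig:pretzel_example}) and exploits the fact that it is \emph{quasi-alternating} (due to Greene~\cite{greene2}) with $\sigma(K)=0$. Since quasi-alternating knots have thin knot Floer homology, \emph{all} concordance invariants coming from knot Floer homology -- $\tau$, $\Upsilon$, $\varphi$ -- are determined by the signature, hence vanish here with no $CFK^\infty$ computation required. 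For the slice-torus invariants arising from quantum cohomologies, the authors simply cite the earlier computation in~\cite{lew2} showing $s_2 = s_3 = \cdots = 0$ for this knot. The $\upsln_3$ computation itself is done not by running filtered Gaussian elimination on the full Khovanov--Rozansky complex as you suggest, but via the equivariant Rasmussen summand $S_3$ of \Cref{sec:equi}: one computes $S_3$ for each pretzel summand (small, explicit two-term complexes), tensors them, evaluates at $\partial w = x^3 - x^2$, and reads off the answer from a complex of total rank five in cohomological degree~$0$ -- a far smaller object than the raw bifiltered complex.

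By contrast, your plan to search over low-crossing knots and compute $CFK^\infty$ directly is workable in principle, but you would be doing a lot of computation that the quasi-alternating observation renders unnecessary, and a generic low-crossing knot with $\upsln_3\neq 0$ is unlikely to have \emph{all} slice-torus and Floer invariants vanish -- the authors had to reach for a connected sum of comparatively large pretzel knots precisely to arrange this coincidence. Your concern about how to pin down ``all known slice-torus invariants'' and in particular how to handle $s^\#$ is a fair one, and in fact the paper does not explicitly address $s^\#$; the authors implicitly treat ``known slice-torus invariants'' as the $s_n$'s and $\tau$, which quasi-alternating-ness and~\cite{lew2} dispose of. So your proposal is, if anything, more scrupulous on that point, but less economical everywhere else.
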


\begin{figure}[t]
        \centering
	\includegraphics[scale=0.08]{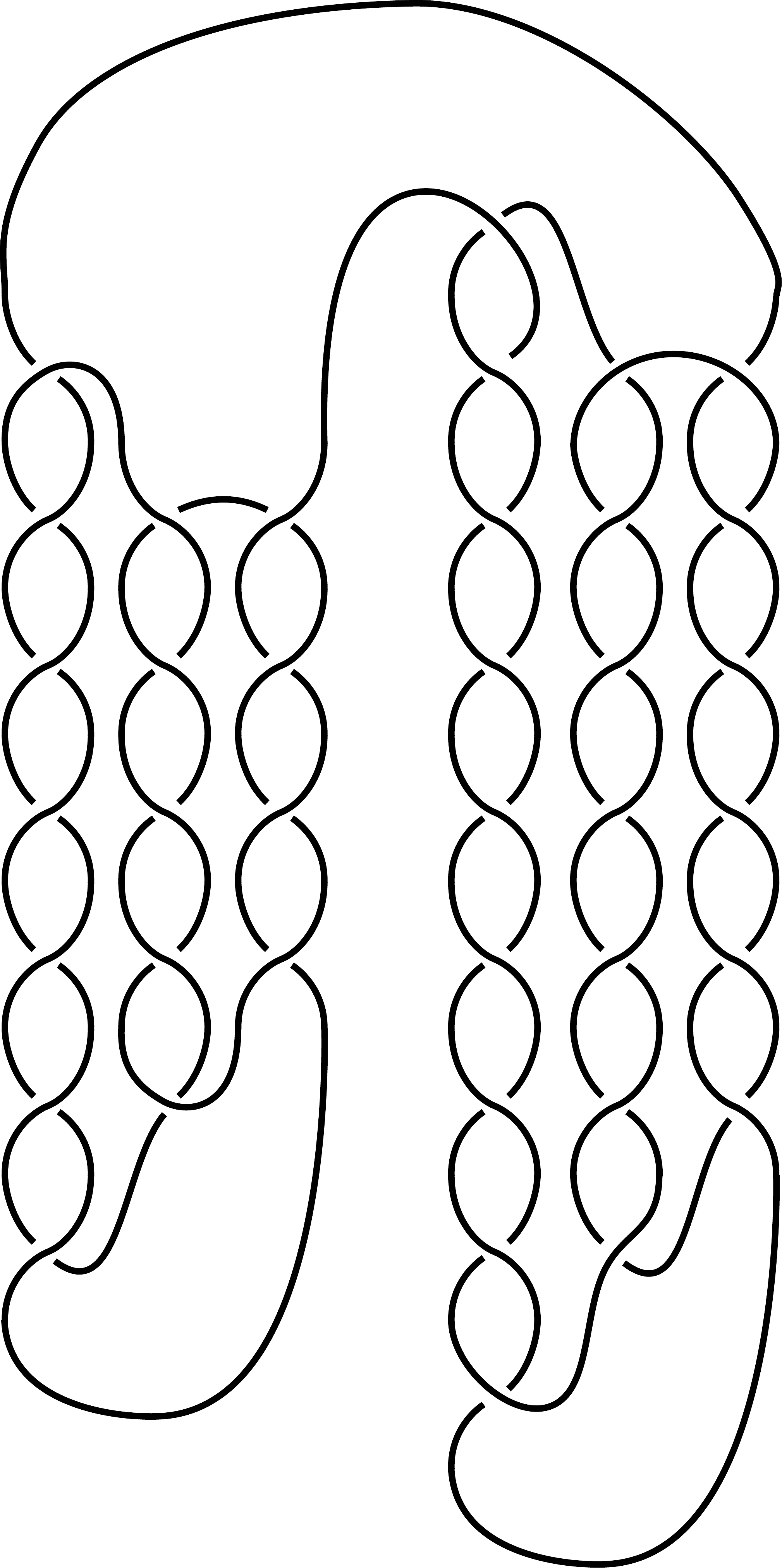}
	\caption{The connect sum of the pretzel knots $P(7,-5,4)$ and $P(-9,7,-6)$.}
	\label{fig:pretzel_example}
\end{figure}

A knot $K$ satisfying the properties of \Cref{prop:existence_of_knot_beating_upsilon_etcetera} can in fact be given explicitly, and an example is given in \cref{fig:pretzel_example}.  We consider this knot $K$ in detail in \Cref{sec:outlook} where we also show that the properties of $\upsln_n$ given above can be used to see that $K$ is of infinite order in the concordance group.  In fact, for example, we can also deduce the following result not obtainable by known invariants.

\begin{proposition}
	\label{prop:inf_order_in_QA_mod_A}
	We write $\langle {\rm QA} \rangle$ (respectively $\langle {\rm A} \rangle$) for the subgroup of the concordance group generated by quasi-alternating (respectively alternating) knots.  The knot given in \cref{fig:pretzel_example} is of infinite order in the group $\langle {\rm QA} \rangle/\langle {\rm A} \rangle$.
\end{proposition}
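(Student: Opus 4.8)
The plan is to show that no nonzero multiple of $K$ (the knot of Figure \ref{fig:pretzel_example}) lies in $\langle {\rm A} \rangle$, by exhibiting a function on the concordance group that vanishes on alternating knots but not on multiples of $K$. The natural candidate is the deviation of $\upsln_3$ from linearity, measured at a well-chosen value of $t$. Concretely, fix $t_0 \in (0,1)$ and set $\mu(J) := \upsln_3(J)(t_0) - t_0 \cdot \upsln_3(J)'(0)$; this measures how far the piecewise-linear graph of $\upsln_3(J)$ departs at $t_0$ from the straight line through the origin with slope determined by the slice-torus invariant $\upsln_3(J)'(0)$ of \cref{thm:derivativeatzero}. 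On any alternating (indeed any quasi-alternating) knot $A$, \cref{prop:boring_for_sqzd_knots} says $\upsln_3(A)$ is linear, so $\mu(A) = 0$; since slice-torus invariants are homomorphisms and concordance invariants vanish on slice knots, $\mu$ descends to a well-defined function on $\langle {\rm A} \rangle$ that is identically zero there. Thus if $nK \in \langle {\rm A} \rangle$ for some $n \neq 0$, we would need $\mu(nK) = 0$.

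The remaining task is therefore to show $\mu(nK) \neq 0$ for every $n \neq 0$. First I would use the quasi-homomorphism property of \cref{thm:quasi-homomorphism}: iterating the stated bound gives $|\upsln_3(\#^n K)(t) - n\,\upsln_3(K)(t)| \leq 2(n-1)t$ for $n > 0$, and the analogous bound for the mirror when $n < 0$, using that $\upsln_3(K)'(0) = -\upsln_3(\bar K)'(0)$ is a homomorphism. Combining this with $\upsln_3(nK)'(0) = n\,\upsln_3(K)'(0)$ yields
\[ |\mu(nK) - n\,\mu(K)| \leq 2(|n|-1)\,t_0 . \]
Hence $|\mu(nK)| \geq |n|\,|\mu(K)| - 2(|n|-1)t_0$, which is positive for all $|n| \geq 1$ provided $|\mu(K)| > 2t_0$. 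So it suffices to produce the explicit knot $K$ of Figure \ref{fig:pretzel_example} together with a value $t_0$ at which $|\mu(K)| > 2t_0$, i.e.\ at which the nonlinearity of $\upsln_3(K)$ is large enough to survive the quasi-homomorphism error. This reduces everything to a computation of $\upsln_3$ on the connect sum $P(7,-5,4) \# P(-9,7,-6)$, which is exactly the sort of computation carried out in \Cref{sec:outlook}; there one also verifies $\upsln_3(K) \neq 0$ (as in \cref{prop:existence_of_knot_beating_upsilon_etcetera}), and in fact the computation will show the relevant nonlinear defect is substantial.

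The main obstacle is the explicit computation of $\upsln_3$ on this particular connect sum of pretzel knots and the verification that the resulting nonlinearity defect genuinely exceeds the $2t_0$ error term — i.e.\ checking that the quasi-homomorphism slack of \cref{thm:quasi-homomorphism} is not large enough to swamp the signal. This is essentially a bookkeeping matter once the piecewise-linear function $\upsln_3(K)$ is known (the summands of a connect sum interact controllably), but it does require knowing $\upsln_3$ on the pretzel summands, which is where the real work lies and which I would defer to the detailed treatment of $K$ in \Cref{sec:outlook}. A minor additional point to check is that $\mu$ really is insensitive to the choice of orientation and to adding slice knots, so that it is well-defined on the quotient group $\langle {\rm QA}\rangle / \langle {\rm A}\rangle$ — but this is immediate from concordance invariance (\cref{thm:upsln_concordance_invariant}) and the homomorphism property of the slice-torus invariant $\upsln_3(\cdot)'(0)$.
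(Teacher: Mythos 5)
The strategy here — detect nonlinearity of $\upsln_3$ and propagate it to connected sums via the quasi-homomorphism bound — is the right kind of idea, but it does not close for this particular knot. Your reduction requires $|\mu(K)| > 2t_0$ for some $t_0$, where $\mu(K) = \upsln_3(K)(t_0) - t_0\,\upsln_3(K)'(0)$. From the computation in \Cref{sec:outlook}, $\upsln_3(K)'(0) = 0$ and $\upsln_3(K)(t_0) = -\tfrac{3}{4}t_0 + \tfrac{1}{4}$ for $t_0 \geq \tfrac{1}{3}$ (and $0$ for $t_0 \leq \tfrac{1}{3}$). Thus $|\mu(K)| = \tfrac{3t_0-1}{4} \leq \tfrac12$, which is never larger than $2t_0$ at any $t_0$ where $\mu(K) \neq 0$. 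The two-sided error $2t$ in \cref{thm:quasi-homomorphism} completely swamps the nonlinearity defect, so your estimate $|\mu(nK)| \geq |n|\,|\mu(K)| - 2(|n|-1)t_0$ gives nothing.

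The paper circumvents this with two ingredients you did not use. First, it uses the \emph{one-sided} superadditivity of \cref{thm:superadditiveat1}, which has zero error at $t=1$, rather than the lossy two-sided quasi-homomorphism bound. Second — and this is the step that genuinely requires additional input — it separately computes $\upsln_3(-K)$ and finds it trivial. This is crucial because $\upsln_n$ is not a homomorphism, so $\upsln_3(-K)$ is \emph{not} determined by $\upsln_3(K)$; your handling of the $n<0$ case silently treats it as if it were. With $\upsln_3(K)(1) < 0$ and $\upsln_3(-K)(1) \geq 0$ in hand, the paper observes: if $K^{\#k}$ were concordant to an alternating knot $A$, then (since $\upsln_3(K^{\#k})'(0) = k\cdot 0 = 0$ and alternating knots have linear $\upsln_3$) one would have $\upsln_3(K^{\#k}) \equiv 0$; writing $K = K^{\#k}\#(-K)^{\#(k-1)}$ and applying superadditivity at $t=1$ then forces $\upsln_3(K)(1) \geq 0$, a contradiction. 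You would need to replace your quasi-homomorphism estimate with this sharper superadditivity argument, and explicitly add the computation of $\upsln_3(-K)$, for the proof to go through.
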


In \Cref{sec:outlook} we further consider the question of the independence of $\upsln_n$ and more classical concordance invariants such as generalized signatures.

\subsection{Discussion}
\label{subsec:discussion}
The slew of concordance invariants arising from quantum $\SL_n$ knot cohomology seems largely independent of those arising from Floer homology or gauge theory.  For example, $\upsln_n$ does not see any information beyond the slice genus for torus knots, while $\Upsilon$ does, but on the other hand $\upsln_n$ is found to be interesting for quasi-alternating knots while $\Upsilon$ must be `standard'.

Whether this independence can be pushed so far that one can find a knot $K$ for which $\upsln_n(K)$ is non-zero for some $n$ and all other known sliceness obstructions vanish, is perhaps less interesting than finding some new topological applications of $\upsln_n$ and related quantum invariants.  It is not obvious, for example, that $\upsln_n$ is insensitive to torsion elements of the concordance group (this is also non-obvious for the unreduced concordance invariants given in \cite{lewarklobb}).

The known exception to the orthogonality of quantum and Floer is Rasmussen's invariant~$s$, defined using Lee's perturbation of Khovanov cohomology.  Kronheimer-Mrowka \cite{km8} showed that $s$ is equal to a concordance homomorphism arising from $SU(2)$ instanton knot Floer homology.  One should then ask whether there is more concordance information than Rasmussen's invariant contained in Khovanov cohomology over the complex numbers (note that the Rasmussen invariant can be defined over any coefficient field using the Bar-Natan potential \cite{bncob}, and its value in general depends on the coefficient field \cite{LipSarKhov}).  Knot Floer homology, which is intimately connected with Khovanov cohomology, seems to admit many refined invariants, could the same be true of Khovanov cohomology?

On the other hand, $\SL_n$ knot cohomologies when $n \geq 3$ already give orthogonal concordance information to that arising from Floer homology.  In the case $n=2$ the failure of Khovanov cohomology to do the same is, roughly speaking, due to the existence of an unoriented skein exact sequence also often present in Floer homology theories.  But such skein exact sequences should not be present for instanton homologies with gauge group $SU(n)$ for $n \geq 3$.  This suggests that higher index Floer homologies should see much more of the concordance group than is seen by those most often currently studied.

\bigskip\paragraph{\emph{Acknowledgments:}}
The authors thank BIRS
and the Isaac Newton Institute for Mathematical Sciences (EPSRC grant~EP/K032208/1)
for support and hospitality during the programs \emph{Synchronizing Smooth and Topological 4-Manifolds}
and \emph{Homology theories in low dimensional topology}, respectively,
where work on this paper was undertaken.
The first author gratefully acknowledges support by the SNSF grant~159208.
The authors thank the referees for their careful reading and suggestions.

\section{Definitions and conventions}
\label{sec:defs}
In this section we lay out conventions for defining the invariant $\upsln_n(K) : [0,1] \rightarrow \R$.  We shall choose these definitions so that
\begin{itemize}
	\item $\upsln_n(K)$ is piecewise linear,
	\item $\upsln_n(K)(0) = 0$ for any knot $K$,
	\item $\upsln_n(U) = 0$ for $U$ the unknot,
	\item $\upsln_n(T_{-2,3}) (t) = t$ for $T_{-2,3}$ the left-handed trefoil.
\end{itemize}

In what follows, $D$ will denote a knot diagram with a basepoint and the potential is $\potential = x^n -x^{n-1}$.
  We write $\CKR(D)$ for the $\SL_n$ Khovanov-Rozansky cochain complex of free finitely-generated $(\CC[x]/\potential)$-modules arising from $D$.  We write
\[
\cdots \subseteq \F^j C_{\potential}^i (D) \subseteq \F^{j+1} C_{\potential}^i (D) \subseteq \cdots
\]
for the quantum filtration and
\[
\cdots \subseteq x^k C_{\potential}^i (D) \subseteq x^{k-1} C_{\potential}^i (D) \subseteq \cdots
\]
for the $x$-filtration. Both of these filtrations are preserved by the differential.
Let us point out that the quantum filtration is \emph{increasing} (the higher the index, the bigger the module),
whereas the $x$-filtration is \emph{decreasing} (the higher the index, the smaller the module).
So the quantum filtration grading of a non-zero cochain $c\in \CKR(D)$ is the minimum~$j$ such that
$c \in \F^j \CKR(D)$, while the $x$-filtration grading of $c$ is the maximum $k \leq n - 1$ such that $c\in x^k \CKR(D)$.

For all $t \in [0,1]$ we describe an increasing filtration with index set $\R$ of $\CKR(D)$ preserved by the differential.
We write this filtration as
\[ \GG_t^{\ell_1} C_{\potential}^i (D) \subseteq \GG_t^{\ell_2} C_{\potential}^i (D) \]
for all $\ell_1, \ell_2 \in \R$, $\ell_1 \leq \ell_2$.

\begin{definition}
	\label{defn:slopingfiltration}
	Writing $\Sigma$ to denote a sum of vector spaces, we define
	\[ \GG_t^{\ell} C_{\potential}^i (D) = \sum_{\substack{\ell \geq t(k+j) - k , \\ k \leq n-1}} (F^j\CKR^i(D) \cap x^k \CKR^i(D)) {\rm .} \]
\end{definition}
Let the $\GG_t$-filtration grading of a non-zero cochain $c\in \CKR^i(D)$ be the minimum~$\ell$ such that~$c \in \GG_t^{\ell}\CKR^i(D)$.

This filtration has another, more graphical, interpretation, which is useful for calculation and visualization.  Suppose that we draw the line of slope $\frac{t}{1-t}$ through the point $(\ell, -\ell)$.  Then if a point $(j,k) \in \mathbb{Z}\times \{0,\ldots, n-1\}$ is above or to the left of this line, it means exactly that
\[ \F^j \CKR^i (D) \cap x^k \CKR^i(D) \subseteq \GG_t^{\ell} \CKR^i (D) {\rm .} \]

There is a cocycle $\psi(D) \in \CKR^0(D)$ essentially first described by Gornik \cite{gornik} (although he considered rather the potential $x^n -1$) representing a non-zero cohomology class, see \cref{defn:psi}.

\begin{definition}
	\label{defn:almostgimel}
	We define $\gamma(D): [0,1] \rightarrow \R$ by
	\[ \gamma(D)(t) = \min \{ \ell : [\psi(D)] \in {\rm im}(H^0(\GG_t^{\ell} \CKR (D)) \rightarrow \HKR^0(D) ) \} {\rm ,}\]
	where the map on cohomologies is induced by inclusion.
\end{definition}
Equivalently, $\gamma(D)(t)$ is the filtration grading of the cohomology class $[\psi(D)] \in \HKR^0(D)$
with respect to the filtration induced on cohomology by $\GG_t$.

\begin{definition}
	\label{defn:gimel}
	We define
		\[ \upsln_n(D)(t) = \frac{1}{2(n-1)} [\gamma(D)(t) - \gamma(U)(t)] { \rm ,}\]
	where $U$ is the zero-crossing diagram of the unknot.
\end{definition}
Explicitly, $\gamma(U)(t) = (n - 1)(2t - 1)$, as one computes from \cref{defn:slopingfiltration}.

\section{A first example}
\label{sec:example}
As a first example of a knot with interesting (in other words, non-linear) $\upsln_n$, let us compute $\upsln_n(K)$ for all $n\in\{3,\ldots,10\}$ and $K=P(2,-3,7)$, a pretzel knot (DT-name~$12n_{235}$).
To lighten notation, we will mostly drop `$(K)$' in this section, writing $\upsln_n$ for $\upsln_n(K)$.  We shall give the calculation for general $n\in\{3,\ldots,10\}$, but the reader would do well to look at \Cref{fig:dotdiagram} for concreteness, where the bones of the calculation for $\upsln_5$ are displayed.

The starting point of the calculation shall be the equivariant complex of a diagram $D$ of
$K$ as defined by Krasner \cite{krasnerEquivariant}.
It is a complex, denoted by $C_{U(n)}(D)$, of graded free modules over the ring
\[
R_n = \mathbb{C}[x, a_0, \ldots, a_{n-1}] / (a_0 + a_1x + \ldots + a_{n-1}x^{n-1} + x^n).
\]
Up to homotopy equivalence, that complex is a knot invariant,
and it specializes to $C_{\potential}(D)$ when the formal variables $a_i$ are replaced by the coefficients of $\potential$.

\begin{figure}[t]
\def\svgwidth{350pt}
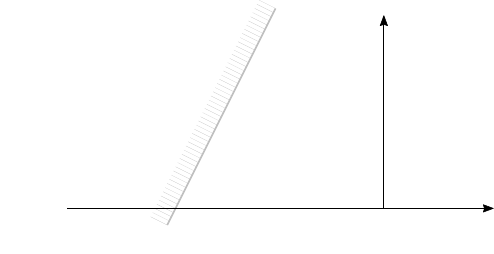
\caption{The filtered complex $S$ in cohomological grading $-1$ and $0$, special case $n = 5$,
as complex of complex vector spaces, i.e.\ forgetting the $x$-action.
Dots represent copies of $\mathbb{C}$
and arrows non-trivial differentials. The colored dots indicate the support of representatives of $[\psi]$.
Dots on or to the left of the gray line of slope $1$ lie in 
$\mathcal{G}^{-5}_{1/2}S$.}
\label{fig:dotdiagram}
\end{figure}

We will use $C_{U(n)}$ rather \emph{ad hoc}, and refer the reader to \cref{sec:equi} for a more conceptual and detailed treatment.
For a suitable diagram $D$ of $K$,
we computed $C_{U(n)}(D)$ for $n\in\{3,\ldots,10\}$ with our program \khoca{} \cite{khoca},
and found that for each of those values of $n$, $C_{U(n)}(D)$ is homotopy equivalent
to a sum of five simpler complexes, only one of which is
non-trivial in cohomological grading~$0$. It follows that this summand on its own determines $\upsln_n(K)$,
and hence we give it our exclusive attention.
The summand has the following form:
\[
\xymatrix@R=1.5ex@C=4em{
                          & t^0 q^{-2n}R_n \ar@{}[dd]|{\oplus} & \\
t^{-1}q^{-2}R_n \ar[ru]^{\potential'} \ar[rd]_{\potential''} \\
 & t^0q^{2-2n}R_n
}
\]
Here, $\potential'$ and $\potential''$ signify the first and second partial derivative with respect to $x$ of $\potential = x^n + a_{n-1}x^{n-1} + \ldots + a_0$, and $t^iq^jR_n$ denotes a free $R_n$-module of rank one in $(t,q)$-grading $(i,j)$.

Since $C_{U(n)}(D)$ exhibits this structure for all $n\in\{3,\ldots,10\}$,
it is a reasonable conjecture that it does in fact for all $n\geq 3$.
The calculations that follow are valid for all $n\geq 3$, and so they
 determine $\upsln_n$ for all $n\geq 3$ provided the conjecture holds.

One may plug in $a_{n-1} = -1, a_{n-2} = \ldots = a_0 = 0$ to obtain the relevant summand $S$ of a filtered complex of free $\CC[x]/(x^n - x^{n-1})$-modules homotopy equivalent to $C_{x^n-x^{n-1}}(D)$. 
Note that $S^0$ is of rank two, and so we will write the cochains in cohomological grading $0$ as vectors with two entries.
The complex $S$ has the following differential~$\differential^{-1}$:\\
\begin{align*}
\differential^{-1}(1) = & \begin{pmatrix}
 nx^{n-1} - (n-1)x^{n-2} \\
 n(n-1)x^{n-2} - (n-1)(n-2)x^{n-3}
\end{pmatrix} {\rm ,}\\[1ex]
\differential^{-1}(x) = & \begin{pmatrix}
 x^{n-1} \\
 n(n-1)x^{n-1} - (n-1)(n-2)x^{n-2}
\end{pmatrix} {\rm ,} \\[1ex]
\differential^{-1}(x^{\geq 2}) = & \begin{pmatrix}
 x^{n-1} \\
 2(n-1)x^{n-1}
\end{pmatrix} {\rm .}
\end{align*}

To get into the spirit of things, let us abandon the general case for a second.
We refer the reader to \Cref{fig:dotdiagram}, which illustrates the complex $S$ described above in the case $n=5$.  The game we play is the following.  First we find a cocycle representative for $[\psi]$.  It shall turn out that the first representative we find generates the red dot in \Cref{fig:dotdiagram}.  Then we take a line (the gray line in that figure is one such example) of a slope between $0$ and $\infty$.  We position this line as far left as we can \emph{while still having some cocycle cohomologous with $\psi$ supported on or to the left of the line}.  Finally, we compute~$\upsln_5$: the slope of the line corresponds to some $t \in [0,1]$ and this leftmost position will determine~$\upsln_5(t)$.  In this particular case we find a cocycle cohomologous with $\psi$ supported in the green dots (although to verify this for herself, the reader will need to decorate the differentials in the figure with the correct coefficients).

We return now to the general case. Nevertheless, we will continue to rely on features of \Cref{fig:dotdiagram}.  Whenever we do this, the reader is invited to assure herself that the features relied upon do indeed hold for $3 \leq n \leq 10$.

We shall see later that the subcomplex $x^{n-1} C_{x^n - x^{n-1}}(D)$ has 1-dimensional cohomology, supported in cohomological grading $0$ and generated by $\psi$.  Therefore, we see immediately that $\psi$ is cohomologous to
$\psi_0 = (x^{n-1}, 0)$, since that vector lives in $x^{n-1}C^0_{x^n-x^{n-1}}(D)$, is a cocycle, and not a coboundary.

\Cref{fig:dotdiagram} shows $\langle\psi_0\rangle$ as a red dot.
It is clearly visible that any line passing through the red dot with slope between $0$ and $1/2$ has no other dots above it.
The red dot (a.k.a. $\psi_0$) has quantum filtration grading $-n-1$, $x$-filtration grading $n-1$,
and thus $\mathcal{G}_t$-grading ${-2t - n + 1}$ (see \cref{defn:slopingfiltration} and paragraphs thereafter).
So for $0 \leq t \leq 1/3$, one finds $\mathcal{G}_t^{-2t - n + 1} S = \langle \psi_0\rangle$
(note that $t = 1/3$ corresponds to a slope of $1/2$).
For those~$t$, $\psi_0$ is thus the `best' cocycle representative of $[\psi]$ with respect to the $\GG_t$-filtration.
Therefore, $\gamma(t) = -2t - n + 1$, and so $\upsln_n(t) = -n/(n-1)t$ for $t \in [0,1/3]$.

In fact, we claim that this holds even for $t\in [0,1/2]$.
To see this, let us verify that for $t \leq 1/2$, there is
no representative $\psi_1 \in [\psi] \cap  \GG_t^\ell S$ with $\ell < -2t - n + 1$. For such $t$ and $\ell$,
\begin{equation} \label{eq:space}\tag{$\dagger$}
\GG_t^\ell S \subset \langle 1, \ldots, x^{n-2} \rangle   \oplus \langle 1, \ldots, x^{n-4} \rangle.
\end{equation}
This can be easily seen graphically by considering which dots lie strictly above a line of slope $1$ through the red dot
(this line is drawn in gray in \cref{fig:dotdiagram}).
The difference $\psi_0 - \psi_1$ must be null-cohomologous, so equal to $\differential^{-1}(\alpha)$ for some $\alpha$;
because $\differential^{-1}(x^i) = \differential^{-1}(x^2)$ for $i \geq 2$,
we may assume $\alpha = \lambda_0  + \lambda_1 x + \lambda_2 x^2$. Note that the second coordinate of $\psi_0 - \psi_1$
is in $\langle 1, \ldots, x^{n-4} \rangle$. This implies that $\lambda_0 = 0$, since the coefficient of $x^{n-3}$ in
$\differential^{-1}(\alpha)$ equals $-\lambda_0(n-1)(n-2)$. Graphically, in the example $n = 5$ shown in \cref{fig:dotdiagram}, the dot at $(-6,0)$
is the only one mapping to $(-8,2)$. Similarly, one finds $\lambda_1 = 0$ by considering the coefficient of $x^{n-2}$,
and finally $\lambda_2 = 0$ by considering the coefficient $x^{n-1}$.
But this implies $\psi_0 = \psi_1$, contradicting~$\psi_1 \in \GG_t^{\ell}$.

\begin{figure}[t]
\centering
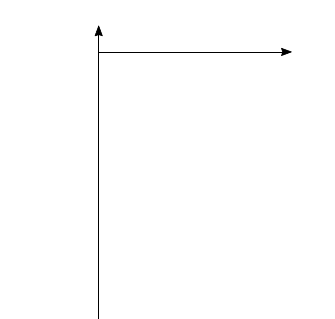%
\caption{$\upsln_5(K)$ for $K$ the $P(2,-3,7)$-pretzel knot.}
\label{fig:example_gimel}
\end{figure}

Now, let $\beta = 2(n-2) + 2nx - n^2x^2$ and compute
\begin{multline*}
\differential^{-1}(2(n-2) + 2nx - n^2x^2) =  \\
\begin{pmatrix}
 2n(n-2)x^{n-1} - 2(n-1)(n-2)x^{n-2} \\
 2n(n-1)(n-2)x^{n-2} - 2(n-1)(n-2)^2x^{n-3}
\end{pmatrix} \\
+
\begin{pmatrix}
 2nx^{n-1} \\
 2n^2(n-1)x^{n-1} - 2n(n-1)(n-2)x^{n-2}
\end{pmatrix}
+
\begin{pmatrix}
 -n^2x^{n-1} \\
 -2n^2(n-1)x^{n-1}
\end{pmatrix} \\
=
\begin{pmatrix}
n(n - 2)x^{n-1} - 2(n-1)(n-2)x^{n-2} \\
 - 2(n-1)(n-2)^2x^{n-3}
\end{pmatrix}.
\end{multline*}
This implies that $[\psi]$ has a representative $\psi_2 = \psi_0 - \differential^{-1}(\beta)/n(n-2)$ supported in $\langle x^{n-2}\rangle \oplus \langle x^{n-3} \rangle$
(support marked as green dots in \cref{fig:dotdiagram}). The $\GG_t$-filtration grading of $\psi_2$ is $-6t - n + 3$.
Hence $\upsln_n$ has a breakpoint at $t = 1/2$.
It is the only one, since for $\ell < -6t - n + 3$ and $t > 1/2$, we have once again \cref{eq:space}.
So, $\upsln_n(t) = -\frac{n+2}{n-1}t + \frac{1}{n-1}$ for $t \in [1/2,1]$. \Cref{fig:example_gimel} shows a plot of $\upsln_5(t)$.

\section{Proofs}
\label{sec:proofs}
In this paper we are mainly concerned with the potential $x^n - x^{n-1}$.  Nevertheless, in the first half of this section we shall work with a pair $(\potential, \alpha)$ given below.

\begin{definition}
	\label{defn:potential_and_simple_root}
	Let $\potential \in \CC[x]$ be a degree $n$ monic polynomial (the \emph{potential}), together with a root $\alpha$ of $\potential$ which occurs with multiplicity $1$.
\end{definition}

The point of enlarging our attention in this way is to arrive at \cref{defn:simple_root_bound} and \cref{prop:simple_root_bound} which give $s_{\potential, \alpha}(K) \in \Q$ whose absolute value gives a lower bound on the slice genus of $K$.  We expect this invariant to depend heavily on the choice of the pair $(\potential, \alpha)$ and to have properties analogous to those of the unreduced slice genus bounds discussed in \cite{lewarklobb}.  We do not, for example, expect it to give a knot concordance homomorphism unless one takes highly non-generic choices of the pair $(\potential, \alpha)$.  It is beyond the scope of this paper to explore these bounds further.  After the first half of this section we return to the potential $x^n - x^{n-1}$.

Another direction left unexplored in this paper is the construction of a $\upsln$-like concordance invariant in the case that the potential has the form $(x^n - x^{n-1})p$ for some monic $p \in \CC[x]$ with neither $0$ nor $1$ as roots.  This should \emph{a priori} be interesting for different choices of $p$ and of $n \geq 2$.  We note here too that the choice in this paper of potential $x^n - x^{n-1}$ is equivalent to the choice (\emph{mutatis mutandis}) of any potential $(x - \alpha)(x - \beta)^{n-1}$ with $\alpha, \beta \in \CC$, $\alpha \not= \beta$.

We begin with a definition of the Gornik cocycle $\psi$, which was originally defined by Gornik \cite{gornik} in the case that the potential is a product of distinct linear factors.

\begin{definition}
	\label{defn:psi}
	Suppose that $D$ is a link diagram.  The oriented resolution $O(D)$ of $D$ corresponds to a summand of the cochain group $C^0_{\potential}(D)$.  If $O(D)$ has $r$ components then this summand is isomorphic to
	\[ \CC [x_1, x_2,  \ldots, x_r]/(\potential(x_1), \potential(x_2), \ldots, \potential(x_r) ) \]
	where $x_i$ is the variable corresponding to the $i$-th component.  The special cocycle $\psi(D) \in C^0_{\potential}(D)$ is defined to be the element of this summand given by
	\[ \psi(D) = \prod_{i-1}^{r} \frac{\potential (x_i)}{x_i - \alpha} {\rm .} \]
\end{definition}

The following lemma is essentially due to Gornik.
\begin{lemma}
	\label{lem:psi_is_a_cocycle}
	We have that $\psi(D)$ is a cocycle. \hfill$\Box$ %
\end{lemma}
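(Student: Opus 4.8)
The plan is to reduce the claim to a local, combinatorial computation using the structure of the Khovanov--Rozansky differential on the oriented resolution summand. Recall that the differential $d \colon C^0_{\partial w}(D) \to C^1_{\partial w}(D)$ is assembled from edge maps associated to the crossings of $D$: at each crossing one passes from the oriented resolution to the singular (wide-edge) resolution, and the corresponding component of $d$ is, up to the matrix-factorization bookkeeping, multiplication by a factor of the form $(x_i + x_j - x_k - x_l)$ or by a symmetric polynomial in the incident variables, depending on whether the crossing is positive or negative and which local picture one is in. Since $\psi(D)$ lives entirely in the oriented-resolution summand, to show $d\psi(D) = 0$ it suffices to show that each such crossing-map, applied to $\psi(D)$, vanishes in the target module.

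The key algebraic input is the following: fix a crossing, and let $x_a, x_b$ be the two variables labelling the arcs of $O(D)$ meeting at that crossing (possibly $a = b$). Write $\psi(D) = \phi_a(x_a)\phi_b(x_b) \cdot (\text{rest})$ where $\phi_c(x) = \partial w(x)/(x-\alpha)$ and ``rest'' involves only the other component variables. The relevant crossing map is, after resolving, multiplication by an element of $\C[x_a,x_b]/(\partial w(x_a),\partial w(x_b))$ that is divisible by $\frac{\partial w(x_a) - \partial w(x_b)}{x_a - x_b}$ (this is the standard ``dumbbell'' relation in the Koszul/matrix-factorization model). So the first step is to verify that
\[
\frac{\partial w(x_a) - \partial w(x_b)}{x_a - x_b}\cdot \frac{\partial w(x_a)}{x_a - \alpha}\cdot\frac{\partial w(x_b)}{x_b - \alpha} = 0
\]
in $\C[x_a,x_b]/(\partial w(x_a),\partial w(x_b))$. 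This is immediate: the product already contains the factor $\partial w(x_a)$ (indeed $\partial w(x_a)\partial w(x_b)$), hence is zero modulo the ideal. The case $a=b$ (a crossing both of whose strands lie on the same component of $O(D)$) is even easier, since then the edge factor is a polynomial in the single variable $x_a$ times $\partial w'(x_a)$ or similar, and $\phi_a(x_a)^{?}$ already carries a factor $\partial w(x_a)$ — one just checks the precise local formula kills it. I would organize this as: (i) recall the explicit per-crossing component of $d$ on the oriented-resolution summand from \cite{khr1}; (ii) observe each such component is divisible (in the appropriate quotient ring) by a symmetric difference quotient of $\partial w$ in the incident variables; (iii) conclude using that $\psi(D)$ is a product over components of $\partial w(x_i)/(x_i-\alpha)$, so multiplying by that difference quotient always produces a multiple of some $\partial w(x_i)$, which is $0$.

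The main obstacle is purely notational/bookkeeping rather than conceptual: one must be careful about which local resolution picture occurs at each crossing (positive vs.\ negative, and which of the two ``half-smoothings'' the $0$-resolution corresponds to), since the precise polynomial multiplier differs between these, and in some conventions the factor attached to the map out of the oriented resolution is not literally the difference quotient but a related symmetric polynomial. However, in every case this multiplier is an element of the ideal generated by $\{\partial w(x_i)\}$ once multiplied by the $\psi$-factors, because each $\psi$-factor supplies exactly the ``missing'' linear factor $(x_i - \alpha)$ needed to complete $\partial w(x_i)$. Since $\alpha$ has multiplicity one in $\partial w$, $\partial w(x)/(x-\alpha)$ is a genuine polynomial and this argument goes through verbatim for our potential $x^n - x^{n-1}$ with $\alpha = 1$. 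Alternatively — and this may be the cleanest writeup — one can invoke that for a potential which \emph{is} a product of distinct linear factors Gornik proved exactly this statement, and then note that the cocycle condition $d\psi = 0$ is a system of polynomial identities in the coefficients of $\partial w$ which, holding on the Zariski-dense set of squarefree potentials, holds identically; one then specializes to $\partial w = x^n - x^{n-1}$. I expect to present the direct local argument as the main proof and mention the specialization argument as a remark.
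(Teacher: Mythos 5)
The paper offers no proof of this lemma at all: it is stated, attributed to Gornik, and closed with a terminal box. Your blind attempt is therefore the only argument on the table, and the direct local computation you lead with contains a genuine error.

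The problem is the step you call ``immediate.'' You assert that
\[
\frac{\partial w(x_a) - \partial w(x_b)}{x_a - x_b}\cdot \frac{\partial w(x_a)}{x_a - \alpha}\cdot\frac{\partial w(x_b)}{x_b - \alpha} = 0
\quad\text{in}\quad \C[x_a,x_b]/(\partial w(x_a),\partial w(x_b)),
\]
``since the product already contains the factor $\partial w(x_a)$.'' It does not: $\partial w(x_a)/(x_a-\alpha)$ is precisely the polynomial with the linear factor $(x_a-\alpha)$ \emph{stripped off}, and nothing in the other two factors restores it. Concretely, take $n=2$, $\partial w = x^2-x$, $\alpha=1$. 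Then $\partial w(x)/(x-1)=x$, the difference quotient is $x_a+x_b-1$, and the product is $x_ax_b(x_a+x_b-1) \equiv x_ax_b \neq 0$ in $\C[x_a,x_b]/(x_a^2-x_a,\,x_b^2-x_b)$ (evaluate at $(1,1)$). The deeper issue is that the component of $d$ leaving the oriented-resolution summand is not multiplication by a difference quotient of $\partial w$ on the same module; it is a chi map whose target is the MF cohomology of a resolution with a thick edge, a different module altogether, and it is not described by a single polynomial multiplier. Gornik's actual argument is not a divisibility argument but a state-sum one: for squarefree $\partial w$, the ring $\C[x]/\partial w$ has orthogonal idempotents, the MF cohomologies of all resolutions decompose into admissible colorings, the chi maps respect that decomposition, and $\psi$ is exactly the all-$\alpha$ state.

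Your backup argument --- invoke the squarefree case and specialize by Zariski density --- is the sound one and is much closer to what the paper implicitly relies on when it writes ``essentially due to Gornik.'' To make it precise, work over a base parametrizing monic degree-$n$ potentials having $\alpha$ as a root (for instance inside the equivariant ring $R_n$ of \cref{sec:equi}). Then $\psi$ and the chain groups are defined over that base, $d\psi$ is a finite list of polynomials in the $x_j$'s and the remaining coefficients of $\partial w$, and vanishing on the Zariski-dense locus of squarefree potentials (where the result is known from Gornik and its standard extensions) forces vanishing identically, in particular at $\partial w = x^n - x^{n-1}$. I would lead with that specialization argument and drop the local computation.
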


If $D$ is a basepointed diagram, then the cochain complex $C_{\potential}(D)$ has the structure of a complex of free $(\CC[x]/\potential)$-modules, where $x$ acts at the basepoint.  Note that $\psi(D)$ is an $\alpha$-eigenvector for the action of $x$, and so $[\psi(D)]$ represents a class in the $\alpha$-eigenspace of $H_{\potential}(D)$.  In fact, in the case that $D$ is a diagram of a knot, this $\alpha$-eigenspace is $1$-dimensional and supported in cohomological grading $0$.

A proof of this can be found in the proof of Theorem 2.15 of \cite{wedrich}, which considers colored perturbed $\SL_n$ cohomology of a $(1,1)$-tangle (which for us is the diagram $D$ cut open at the basepoint).  Specializing to the $1$-colored case and working with a general degree $n$ potential $\potential$, Theorem 2.15 identifies the cohomology of the $\lambda$-eigenspace of the complex $\CKR(D)$ with the $\SL_m$ cohomology of $D$ where $m$ is the multiplicity of $\lambda$ as a root of $\potential$.  Since the $\SL_1$ cohomology of a knot is $1$-dimensional, the result follows.

Now we know that either $[\psi(D)] = 0$ or $[\psi(D)]$ generates the $\alpha$-eigenspace.  To see that $[\psi(D)]$ is such a generator, one could generalize arguments of Gornik's.  We in fact deduce the result indirectly from the following proposition, whose proof in the separable potential case was given in \cite{lobb1} and \cite{wu3}, and extends to our current case with no changes.
\begin{proposition}
	\label{prop:1handle_psi_preserved}
	If $D_1$ is a link diagram obtained from the diagram $D_0$ by a $1$-handle attachment, then the induced cochain map takes $\psi(D_1)$ to a non-zero multiple of~$\psi(D_0)$.  \qed
\end{proposition}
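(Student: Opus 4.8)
The plan is to reduce the proposition to a short computation in the Frobenius algebra $A := \C[x]/\partial w$ involving the element $e := \partial w(x)/(x-\alpha) \in A$. Indeed, under the identification of Definition \ref{defn:psi}, the oriented-resolution summand of $C^0_{\partial w}(D)$ is $A^{\otimes r}$, one tensor factor per circle of $O(D)$, and $\psi(D) = e^{\otimes r}$. As the excerpt indicates, everything apart from this computation is exactly the argument of \cite{lobb1, wu3}; the point is to isolate where the potential $\partial w$ enters and to confirm that nothing there uses that $\partial w$ is separable.

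First I would recall the local model of the map induced by a $1$-handle attachment. It is supported in a disk in which $D_0$ looks like two parallel arcs and $D_1$ like their reconnection, with no crossings; the induced cochain map is the identity on the (common) cube of resolutions outside the disk and the saddle cobordism map inside it. Vertex by vertex, the two arcs lie either on two distinct circles of the resolution or on a single circle, and accordingly the saddle map acts on the corresponding one or two tensor factors of that vertex's summand as the multiplication $m : A \otimes A \to A$ (a merge) or the comultiplication $\Delta : A \to A \otimes A$ (a split), and as the identity on all other factors. In particular the saddle preserves cohomological degree and does not mix distinct cube vertices, so it sends the oriented-resolution summand of $C^0_{\partial w}(D_0)$ into that of $C^0_{\partial w}(D_1)$ — carrying $O(D_0)$ to $O(D_1)$ with the induced orientations, for the coherently oriented bands relevant to the applications — and there it acts as $m$ or $\Delta$ on the affected factor(s). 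This is precisely the structure used in \cite{lobb1, wu3} and needs nothing about $\partial w$ beyond $A$ being the circle algebra. It therefore remains to show that $m(e \otimes e)$ is a non-zero multiple of $e$ and that $\Delta(e)$ is a non-zero multiple of $e \otimes e$; granting this, $\psi(D_0) = e^{\otimes r}$ is sent to a non-zero multiple of $\psi(D_1)$, obtained by replacing the one or two affected copies of $e$ by $m(e \otimes e)$ or by $\Delta(e)$.

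For the algebraic claim I would use that $\alpha$ is a simple root of $\partial w$. By the Chinese remainder theorem $A \cong \C \times B$ with $B = \C[x]/(\partial w/(x-\alpha))$, the $\C$-factor corresponding to $\alpha$; since $\alpha$ is not a root of $\partial w/(x-\alpha)$, multiplication by $x-\alpha$ is invertible on $B$, and hence the $\alpha$-eigenspace of multiplication by $x$ on $A$ is the line $\C\epsilon$ spanned by the idempotent $\epsilon = (1,0)$. Now $e$ lies on this line, because $(x-\alpha)e = \partial w(x) = 0$ in $A$, and $e \neq 0$ since it is a non-zero polynomial of degree $n-1 < \deg\partial w$; thus $e = c\,\epsilon$ with $c \in \C \setminus \{0\}$. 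Then $m(e \otimes e) = e^2 = c^2\epsilon^2 = c^2\epsilon = c\,e$. For $\Delta$, the Frobenius identities $\Delta(ab) = (a \otimes 1)\Delta(b) = \Delta(a)(1 \otimes b)$ applied with $a = b = \epsilon$ give, after combining, $\Delta(\epsilon) = (\epsilon \otimes \epsilon)\Delta(\epsilon)$; since multiplication by $\epsilon \otimes \epsilon$ is the projection of $A \otimes A$ onto $\C\epsilon \otimes \C\epsilon = \C(\epsilon \otimes \epsilon)$, this forces $\Delta(\epsilon) = \mu\,\epsilon \otimes \epsilon$ for some scalar $\mu$, and $\mu \neq 0$ because the counit identity $(\varepsilon \otimes \mathrm{id}) \circ \Delta = \mathrm{id}$ makes $\Delta$ injective. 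Rescaling, $\Delta(e) = (\mu/c)\,e \otimes e$, as required.

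The main obstacle is not this algebra — which is brief — but the bookkeeping in the second paragraph: being certain that the $1$-handle map really is the saddle map vertex by vertex, with no contributions mixing cube vertices, and with the correct behaviour of the oriented resolution under a coherently oriented band. These are exactly the facts established (for separable $\partial w$) in \cite{lobb1, wu3}; what the present proof adds is the remark that their verification is local in the cube of resolutions and hence insensitive to separability, the only potential-dependent ingredient being the two identities for $e$ proved above, which hold for every potential with a simple root $\alpha$.
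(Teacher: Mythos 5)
The paper offers no proof of its own here: the statement carries a bare \verb|\qed|, and the surrounding text simply asserts that the argument of \cite{lobb1,wu3} for separable potentials ``extends to our current case with no changes.'' Your proposal supplies the content that claim elides, and the algebraic core is correct. You correctly identify the only ingredient that could conceivably depend on the potential --- the two identities $m(e\otimes e)=ce$ and $\Delta(e)=(\mu/c)\,e\otimes e$ for $e=\partial w/(x-\alpha)$ --- and your derivation uses nothing beyond $\alpha$ being a \emph{simple} root: the CRT decomposition $A\cong\C\epsilon\times B$, the observation that $e$ spans the $\alpha$-eigenspace, the absorption $\Delta(\epsilon)=(\epsilon\otimes\epsilon)\Delta(\epsilon)$, and injectivity of $\Delta$ from the counit identity. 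That is precisely the check the paper omits.

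One caveat on the topological framing: the statement that ``vertex by vertex, the two arcs lie either on two distinct circles of the resolution or on a single circle'' is an $n=2$ picture. For $\sl_n$ with $n\geq 3$ the vertices of the cube are MOY graphs with thick (singular) edges, not disjoint unions of circles, so the saddle map at a generic vertex is not literally an $m$ or $\Delta$ on tensor factors. This does not harm your argument, because the only vertex that matters is the oriented resolution, and $O(D_0)$, $O(D_1)$ \emph{are} unions of circles with the saddle map acting there by $m$ or $\Delta$; but the sentence should be restricted to that summand rather than asserted cube-wide. With that adjustment your proof is a correct, more explicit reconstruction of what the paper cites, and it actually demonstrates the ``extends with no changes'' claim rather than asserting it.
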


\begin{remark}
	\label{rem:cohomology_contravariance}
	To be consistent with most of the literature, since we are speaking of knot \emph{cohomology} throughout this paper, we shall be thinking of the maps on the cohomology induced by link cobordism \emph{contravariantly}.  This is essentially a stylistic choice.
\end{remark}

With this \protect\Cref{prop:1handle_psi_preserved}  in hand we are now ready to deduce the following.

\begin{proposition}
	\label{prop:psi_is_not_a_coboundary}
	The class $[\psi(D)] \in H_{\potential}(D)$ is non-zero.
\end{proposition}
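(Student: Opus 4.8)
The plan is to exhibit a link diagram $D_1$ of the unknot for which we can directly check that $[\psi(D_1)] \neq 0$, and then to use \cref{prop:1handle_psi_preserved} to propagate this non-vanishing to an arbitrary diagram $D$ of a knot $K$. First I would take $D_1$ to be the zero-crossing diagram $U$ of the unknot: here $\CKR(U)$ is concentrated in cohomological degree $0$ and equals the single free module $\C[x_1]/\partial w(x_1) = R$ with zero differential, so $H_{\partial w}(U) = R$ and $\psi(U) = \partial w(x_1)/(x_1 - \alpha)$ is manifestly a non-zero element (it is a non-zero polynomial of degree $n-1 < n$, hence non-zero in $R$), and it is not a coboundary since the differential is zero. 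Thus $[\psi(U)] \neq 0$.

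Next I would connect $D$ and $U$ by a sequence of $1$-handle attachments. Given any diagram $D$ of the knot $K$, one can attach $1$-handles (oriented band moves) to pass from $D$ to a split diagram $D \sqcup U'$ where $U'$ is an unknot, and then, working in the reverse direction — or, more cleanly, just use that any knot cobordism from $K$ to the unknot built from births and $1$-handles induces the relevant cochain maps. Concretely: there is a sequence of link diagrams $D = E_0, E_1, \ldots, E_m = U$ where consecutive diagrams differ by a single $1$-handle attachment (this is just the statement that $K$ is connected to the unknot by some genus-$g$ cobordism, decomposed into elementary pieces; births can be absorbed too, but \cref{prop:1handle_psi_preserved} as stated handles $1$-handles, and for births one checks separately that $\psi$ of a split diagram restricts correctly — in fact a birth tensors on a factor $\partial w(x)/(x-\alpha)$, which is exactly compatible with the definition of $\psi$). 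Applying \cref{prop:1handle_psi_preserved} along this sequence, the composite cochain map $\CKR(D) \to \CKR(U)$ sends $\psi(D)$ to a non-zero scalar multiple of $\psi(U)$, hence sends $[\psi(D)]$ to a non-zero multiple of $[\psi(U)] \neq 0$. Therefore $[\psi(D)] \neq 0$.

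The main obstacle I anticipate is not conceptual but organizational: making sure the chain of elementary moves connecting $D$ to $U$ consists of moves to which \cref{prop:1handle_psi_preserved} literally applies, and in particular dealing with birth/death moves and with the fact that intermediate diagrams $E_i$ are link diagrams (so $\psi(E_i)$ lives in a multi-variable module) rather than knot diagrams. The cleanest route is: first attach $1$-handles to $D$ to reduce $K$ to the unknot (possible for any knot, using a Seifert-type argument or simply that every knot is slice in a homology ball — but elementarily, one can always band-sum a knot to the unknot), obtaining a sequence of $1$-handle attachments only; then observe that both endpoints can be taken to be $U$ after deleting any leftover split unknots, and that splitting off an unknot component is detected correctly by $\psi$ because of the product formula in \cref{defn:psi}. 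Since \cref{prop:1handle_psi_preserved} guarantees non-zero multiples at each step, and non-zero multiples of non-zero classes are non-zero, the conclusion follows. An alternative, perhaps slicker, argument avoids even this bookkeeping: since $[\psi(D)]$ is known (from the cited \cite{wedrich}, Theorem 2.15) to span the $\alpha$-eigenspace \emph{or} be zero, it suffices to rule out the zero case, and \cref{prop:1handle_psi_preserved} applied to a single band connecting $D$ to a split union $D' \sqcup U$ (or the reverse) already forces $[\psi]$ to be carried to and from a non-zero class, which it could not do if it were zero.
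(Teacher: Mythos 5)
Your overall strategy---propagate non-vanishing along $1$-handle attachments using \cref{prop:1handle_psi_preserved}---is exactly the paper's. But the specific reduction you propose has a gap that the paper's argument is crafted to avoid. A $1$-handle attachment (saddle move) is a local modification of two parallel arcs and does not change the number of crossings of a diagram. Consequently one cannot connect a knot diagram $D$ with crossings to the zero-crossing unknot diagram $U$ by $1$-handle moves alone: any such chain $D = E_0 \to E_1 \to \cdots \to E_m$ terminates at a diagram $E_m$ with the same crossing number as $D$, and showing $[\psi(E_m)]\neq 0$ is then the same kind of problem you started with. To simplify the terminal diagram you would want Reidemeister moves, but at this stage invariance of $[\psi]$ under Reidemeister moves is not available (it is downstream of the result being proved, and \cref{prop:1handle_psi_preserved} says nothing about Reidemeister moves). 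The same circularity undermines your ``slicker'' alternative: $\psi(D)\mapsto c\,\psi(D'\sqcup U)$ only yields $[\psi(D)]\neq 0$ if one already knows $[\psi(D'\sqcup U)]\neq 0$, since a map on cohomology sends $0$ to $0$; and $D'\sqcup U$ is a still-complicated diagram.

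The paper's proof handles this by making a very specific choice of $1$-handle sequence: attach two saddles on either side of each crossing of $D$. This produces, by $1$-handles only, a diagram $L$ of an unlink each of whose components has at most one crossing (the crossings have not been removed, but have been isolated onto small split circles). Because $\CKR(L)$ is a tensor product over the components, it then suffices to observe that $[\psi(U')]\neq 0$ for $U'$ a diagram of the unknot with $0$ crossings or $1$ crossing. For $0$ crossings or $1$ positive crossing there are no degree-$0$ coboundaries at all; for $1$ negative crossing, the filtered degree of the differential rules $\psi(U')$ out as a coboundary. Your proposal would be repaired by replacing the vague reduction to the zero-crossing unknot with this explicit crossing-by-crossing construction and finishing with the case analysis on small unknot diagrams.
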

\begin{figure}[t]
\centering
\includegraphics{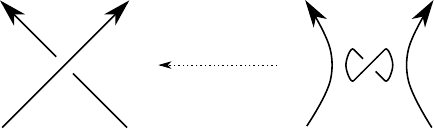}
\caption{Adding two $1$-handles, one on each side of a crossing.} 
\label{fig:trivialize}
\end{figure}
\begin{proof}
	By adding two 1-handles for every crossing of $D$ as shown in \protect\cref{fig:trivialize}, one obtains a presentation of a cobordism that takes a diagram $L$ of the unlink, in which every component has either zero or one crossings, to the diagram $D$.  In the light of \Cref{prop:1handle_psi_preserved}, it is therefore enough to verify that $\psi(L)$ represents a non-zero cohomology class.  Hence it is enough to verify that $\psi(U)$ represents a non-zero cohomology class when $U$ is a diagram of the unknot with at most one crossing.
	
	In the case of zero crossings or of a positive crossing this is trivially true since the coboundaries of cohomological grading $0$ consist of just the $0$ element.  In the case of $U$ with one negative crossing, the filtered degree of the differential ensures that $\psi(U)$ cannot be a coboundary.
\end{proof}

For any two basepointed diagrams of a knot $K$, there exists a sequence of basepoint-avoiding Reidemeister moves to get from one diagram to the other.  We note that the isomorphisms induced by such moves commute with the action of multiplication at the basepoint.  Therefore, since $\langle [ \psi(D) ] \rangle$ is 1-dimensional and characterized as the $\alpha$-eigenspace of such an action, we shall allow ourselves in future to refer to $\langle [\psi(D)] \rangle$ as $\langle [\psi(K)] \rangle$ when it makes sense to do so.

Suppose that $D$ is a diagram of a link with $k$ components, each component with a basepoint.  By acting at the $i$-th basepoint, we give $H_{\potential}(D)$ the structure of a $(\CC[x_i]/\potential(x_i))$-module.  This module structure is independent of the choice of the basepoint and of the diagram.

The class $[\psi(D)]$ is an $\alpha$-eigenvector for the action of each $x_i$.  In fact, it should be true that
\[ \langle [\psi(D)] \rangle = \bigcap_i \ker(x_i - \alpha : H_{\potential}(D) \rightarrow H_{\potential}(D)) {\rm .} \]
Unfortunately we have not been able to find a precise reference for this result, which deserves a more general treatment than we wish to give in this paper.  Consequently, we circumvent its use by appealing to a topological trick (stated and proved in more generality than we need in \cite{lobb1}).

\begin{lemma}
	\label{lem:nice_movie_presentation}
	Suppose we are given a cobordism $\Sigma \hookrightarrow S^3 \times [0,1]$ between two knots $K_i \hookrightarrow S^3 \times \{ i \}$ for $i = 0,1$, and a choice of diagram $D_i$ for each $K_i$.  Then there exists a movie presentation of $\Sigma$, starting with $D_0$ and ending with $D_1$, in which the elementary cobordisms occur in the following order.
	\begin{enumerate}
		\item Attachment of $0$-handles.
		\item Reidemeister moves.
		\item Attachment of $1$-handles.
		\item Reidemeister moves.
		\item Attachment of $2$-handles.
 \qed
	\end{enumerate}
\end{lemma}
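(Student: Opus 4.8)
The plan is to start from an arbitrary movie presentation of $\Sigma$ running from $D_0$ to $D_1$ and then to isotope $\Sigma$ rel $\partial$, equivalently to commute elementary cobordisms past one another, until they occur in the order asserted. Such a movie exists by a standard general-position argument: after a small isotopy of $\Sigma$ rel $\partial$ we may assume the height function $h\colon\Sigma\to[0,1]$ is Morse with distinct critical values and that $\Sigma$ is generic with respect to the diagram projection. Its minima, saddles and maxima then give rise to finitely many $0$-, $1$- and $2$-handle attachments occurring one at a time, and in between the link is carried by an ambient isotopy which, by the Reidemeister theorem applied along the isotopy, is realised by a finite sequence of Reidemeister and planar isotopy moves.

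The one geometric input is a commutation identity. If a handle attachment is supported near an arc $a\subset S^3$ attached to a link $L$, and $\phi=(\phi_t)$ is an ambient isotopy of $S^3$, then band surgery (resp.\ birth, death) along $a$ applied to $\phi_1(L)$ equals $\phi_1$ applied to the handle attached along $\phi_1^{-1}(a)$; hence one has the movie identity $[\phi]\,[\text{handle along }a]=[\text{handle along }\phi_1^{-1}(a)]\,[\phi]$. On the level of diagrams this says that a handle may be pushed to the left past a Reidemeister move, at the cost of moving its attaching arc. Separately, a $0$-handle (birth) is supported in a small ball, so it may be pushed to the left past \emph{any} earlier elementary cobordism, possibly at the cost of an auxiliary ambient isotopy moving the newly born circle into place; dually — by turning the movie upside down — a $2$-handle (death) may be pushed to the right past any later elementary cobordism.

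With these commutations the argument is three steps. First, push every $0$-handle to the very beginning, so the movie starts with all the $0$-handles followed by a presentation that, after the induced Reidemeister moves, reaches a diagram of $D_0\sqcup(\text{unknots})$. Second, dually, push every $2$-handle to the very end. The movie now reads: $0$-handles, then a sequence consisting only of Reidemeister moves and $1$-handles, then $2$-handles. Third, inside that middle sequence, repeatedly apply $[\phi]\,[\text{handle}]=[\text{handle}']\,[\phi]$ to slide each $1$-handle leftward past every Reidemeister move preceding it (treating already-consecutive blocks of $1$-handles as a unit), until the middle sequence reads: Reidemeister moves, all the $1$-handles consecutively, Reidemeister moves. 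Absorbing adjacent blocks of Reidemeister moves then gives precisely the order (1)--(5).

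The main obstacle is bookkeeping rather than conceptual: ambient isotopies are in general ``global'' while handle attachments are ``local'', so they cannot be slid past each other naively. What rescues this is exactly the conjugation identity above, which trades a local move across a global move for the same local move with a different but equally valid attaching arc; the only points requiring real care are verifying this identity cleanly and checking that the auxiliary isotopies created when commuting births to the front and deaths to the back can always be absorbed into neighbouring isotopy blocks. (Alternatively one could appeal to the rearrangement theorems for Morse functions — an index-$0$ critical point can be rearranged below, and an index-$2$ critical point above, any critical point of different index, with no obstruction on dimensional grounds — but the movie argument above is self-contained.)
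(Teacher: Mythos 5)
The paper gives no proof here: it simply cites \cite{lobb1}, where a more general version of this rearrangement lemma is established by Morse theory --- essentially your parenthetical alternative, namely that for a generic height function $h\colon\Sigma\to[0,1]$ a critical point of index $i$ may be isotoped below one of index $j$ whenever $i\le j$, because the descending sphere of the first (dimension $i-1$) and the ascending sphere of the second (dimension $1-j$) generically miss each other in a $1$-dimensional level set of $\Sigma$. Your movie-based commutation argument is a genuinely different and more hands-on route to the same conclusion, and is sound in outline, but it is weakest exactly where you flag it. When you slide a $1$-handle left past a Reidemeister move $\phi$, the pulled-back attaching arc $\phi_1^{-1}(a)$ need not sit as a local planar saddle in the earlier frame, so one may need auxiliary Reidemeister moves to make it local; if any of these land to the \emph{left} of the conjugated handle, the handle has not actually moved forward in the movie and the naive induction stalls. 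To close this one either has to argue that the band may always be re-embedded as a planar saddle by an isotopy that does not touch the underlying link diagram (so the auxiliary moves all land on the right of the handle), or fall back on the Morse rearrangement, which sidesteps the diagrammatic bookkeeping entirely. In short: your route is correct in spirit and more elementary, the paper's (cited) route is cleaner because the dimension count does all the work at once, and the single step in your version that still needs pinning down is the termination of the $1$-handle commutation.
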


The point of this topological trick for us is the following lemma.

\begin{lemma}
	\label{lem:1-eigenspace_for_special_links}
	Suppose that $D$ is a diagram of a link, where the link consists of a knot and the disjoint union of a $k$-component unlink.  Writing $x_i$ for $i = 0,1,\ldots ,k$ for basepoints on each component we have that
	\[ \langle [\psi(D)] \rangle = \bigcap_{i=0}^k \ker(x_i - \alpha : H_{\potential}(D) \rightarrow H_{\potential}(D)) {\rm .} \]
\end{lemma}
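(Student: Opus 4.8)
The plan is to reduce the statement to the case where the $k$-component unlink is \emph{empty}, i.e.\ to the fact that for a knot $D'$ one has $\langle[\psi(D')]\rangle = \ker(x_0 - \alpha)$, which we already know since the $\alpha$-eigenspace of $H_{\partial w}(D')$ is $1$-dimensional and generated by $[\psi(D')]$. The bridge between the two situations is provided by $1$-handle attachments: a disjoint $k$-component unlink can be capped off one component at a time by attaching $2$-handles (equivalently, the knot-with-unlink is obtained from the knot by attaching $k$ split $0$-handles), and \Cref{prop:1handle_psi_preserved} tells us that such cobordism maps carry $\psi$ to a nonzero multiple of $\psi$.

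First I would set up notation: let $D'$ be the knot component of $D$ (forgetting the $k$-component unlink), and let $F \colon C_{\partial w}(D') \to C_{\partial w}(D)$ be the cochain map induced by attaching $k$ split $0$-handles, one creating each unlink component; let $G \colon C_{\partial w}(D) \to C_{\partial w}(D')$ be the map induced by the reverse cobordism (attaching $k$ $2$-handles capping off the unlink components). By the standard neck-cutting / $(1+1)$-dimensional TQFT relation, $G \circ F$ is (up to a nonzero scalar) the identity on $H_{\partial w}(D')$, so $F_*$ is injective and $G_*$ is surjective; moreover $F_*$ is an isomorphism onto its image since the cohomology of a split unlink component just tensors in a copy of $R_i$. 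The crucial compatibility is that $F$ and $G$ commute with the action of $x_0$ (the basepoint on the knot, which is unaffected by the handle attachments away from it), and that $F_*$ intertwines the $x_i$-actions appropriately: on the image of $F_*$, each new variable $x_i$ acts exactly as $x_0$ does under the TQFT identification of the capped-off component's copy of $R_i$ (since the $2$-handle identifies $x_i$ with $x_0$ up to the cobordism, or more precisely the $0$-handle creates a free $R_i$-summand and the relevant eigenvector condition forces $x_i$ to act as $\alpha$ precisely on $F_*$ of the $\alpha$-eigenspace).

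The key steps, in order, are: (i) show $F_*$ restricts to an isomorphism from $\ker(x_0 - \alpha \colon H_{\partial w}(D') \to H_{\partial w}(D'))$ onto $\bigcap_{i=0}^k \ker(x_i - \alpha \colon H_{\partial w}(D) \to H_{\partial w}(D))$ — one inclusion is immediate because $\psi(D') \mapsto (\text{nonzero multiple of}) \psi(D)$ by \Cref{prop:1handle_psi_preserved} and $\psi(D)$ is an $\alpha$-eigenvector for every $x_i$; for the other inclusion, given a class $z \in \bigcap_i \ker(x_i - \alpha)$, use that $G_*$ is surjective with $G_* F_* = \mathrm{id}$ and that $G_* z$ lies in $\ker(x_0 - \alpha)$, then check $F_* G_* z = z$ using that on the $\alpha$-eigenspace the capped-off tensor factors contribute nothing beyond a scalar; (ii) invoke the already-established $1$-dimensionality of $\ker(x_0 - \alpha) = \langle[\psi(D')]\rangle$ on the knot side; (iii) transport this through $F_*$ to conclude $\bigcap_{i=0}^k \ker(x_i - \alpha) = \langle F_*[\psi(D')]\rangle = \langle[\psi(D)]\rangle$, the last equality again by \Cref{prop:1handle_psi_preserved}.

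The main obstacle is step (i), specifically verifying that $F_* G_* z = z$ for $z$ in the multi-eigenspace — equivalently, that capping off a split unlink component and then re-creating it acts as the identity on the $\alpha$-eigenspace rather than merely as a projection. This is where one genuinely uses that $z$ is an $\alpha$-eigenvector for the new variable $x_i$: the composite $F \circ G$ on the $R_i$-factor is (up to scalar) the projection onto the $\alpha$-eigenline of $R_i = \C[x_i]/\partial w(x_i)$ (since $\alpha$ is a simple root, this eigenline is a direct summand as an $R_i$-module), so it fixes exactly those classes on which $x_i$ already acts as $\alpha$. Once this local computation is pinned down — and it is essentially the same neck-cutting relation used to prove \Cref{prop:psi_is_not_a_coboundary}, together with the simple-root splitting $R_i \cong (\C \cdot e_\alpha) \oplus (\text{rest})$ — the rest of the argument is formal. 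I would phrase it via the lemma of \Cref{lem:nice_movie_presentation} if I need to be careful that the $0$-handles can indeed be inserted first, but since here $D$ is explicitly a split diagram that is unnecessary: the $0$-handle movie presentation is transparent.
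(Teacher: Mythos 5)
There is a genuine gap. The key claim that ``by the standard neck-cutting / $(1+1)$-dimensional TQFT relation, $G \circ F$ is (up to a nonzero scalar) the identity'' is false: the composite of a $0$-handle birth with the matching $2$-handle death is a disjoint $2$-sphere alongside the identity cobordism on $D'$, and a closed sphere evaluates to $\varepsilon(\eta(1))=\varepsilon(1)=0$ in the Frobenius algebra $\C[x]/\partial w$ for any $n\geq 2$. So $G_*F_*=0$, not a nonzero multiple of the identity, and neither the injectivity of $F_*$ nor the surjectivity of $G_*$ follows. The secondary claim -- that $F\circ G$ on the new $R_i$-factor is (up to scalar) the projection onto the $\alpha$-eigenline -- is also wrong: on that tensor factor $F\circ G$ is $\eta\circ\varepsilon$, whose image is $\langle 1\rangle$, not the eigenline $\langle\partial w(x_i)/(x_i-\alpha)\rangle$. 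These two maps agree only up to a unit if $n=1$. There is also a smaller issue: the lemma is stated for an arbitrary diagram $D$ of the split link, so even after fixing the algebra you would still need Reidemeister invariance of the $\alpha$-eigenspaces to reduce to a split diagram; your final remark dismisses this as unnecessary, but a general diagram of a split link need not itself be split.

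The paper avoids all of this by an argument that is both shorter and closer to what you are implicitly computing when you think about ``the local $R_i$-factor.'' For the split diagram where each unknot component has zero crossings, the cochain complex \emph{is} the tensor product $C_{\partial w}(D')\otimes R_1\otimes\cdots\otimes R_k$, so $\bigcap_i\ker(x_i-\alpha)$ factors as the tensor product of the $\alpha$-eigenspaces; each $R_i$-factor contributes the line $\langle\partial w(x_i)/(x_i-\alpha)\rangle$ and the knot factor contributes $\langle[\psi(D')]\rangle$ by the $k=0$ case, so the intersection is $\langle[\psi(D)]\rangle$ by inspection of \cref{defn:psi}. Reidemeister invariance of the basepoint action then handles a general diagram. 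If you want to keep a cobordism-flavoured argument, the fix is to replace $F$ by $F$ post-composed with multiplication by $\prod_i\partial w(x_i)/(x_i-\alpha)$ on the new components (a ``dotted'' birth), so that $G\circ F'$ becomes multiplication by $\varepsilon\bigl(\partial w(x)/(x-\alpha)\bigr)^k\neq 0$; but at that point the computation reduces to the tensor-product observation above, so the detour through cobordisms buys nothing.
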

\begin{proof}
	First note that in the case $k=0$ the result holds.  In the case of $k \geq 1$, the result is then seen to be true when $D$ is the diagram in which each unknot component has $0$ crossings, since then the cochain complex is just a tensor product.  Finally note that the $\alpha$-eigenspace for the action of each $x_i$ does not change under Reidemeister moves.
\end{proof}

\begin{proposition}
	\label{prop:concordance_invariance}
	Suppose that $\Sigma$ is a smooth connected cobordism between knots $K_0$ and $K_1$, and suppose it has a given movie presentation between diagrams $D_0$ and $D_1$, satisfying the conditions of \protect\Cref{lem:nice_movie_presentation}.  Since all the $1$-handle attachments have been arranged to take place together, we assume they have been reordered so that the last $k$ $1$-handles each create a new component where $k$ is the total number of $2$-handles of the presentation.
	
	Then the map induced by the presentation
	\[ \Sigma_* : H_{\potential}^0(K_1) \rightarrow H_{\potential}^0(K_0) \]
	satisfies
	\[ \Sigma_* \langle [\psi(K_1)] \rangle = \langle [\psi(K_0)] \rangle \subseteq H_{\potential}^0 (K_0) { \rm .}\]
	Furthermore $\Sigma_*$ is induced by a cochain map of quantum filtration degree $(n-1)g(\Sigma)$ where we write $g$ for the genus.
	
\end{proposition}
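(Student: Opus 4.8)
The plan is to decompose $\Sigma_*$ along the movie furnished by \Cref{lem:nice_movie_presentation} (with the $1$-handles reordered as in the hypothesis) and to follow the class $[\psi(K_0)]$ through the elementary pieces, computing the quantum degree separately by summing the fixed degrees of those pieces. The structural inputs I would use are: \Cref{prop:psi_is_not_a_coboundary}, so that $[\psi(L)]\neq 0$ for every link diagram $L$ in the movie; \Cref{prop:1handle_psi_preserved}, that every $1$-handle sends $\psi$ to a nonzero multiple of $\psi$; \Cref{lem:1-eigenspace_for_special_links}, which pins down $\langle[\psi]\rangle$ as the joint $\alpha$-eigenspace of the basepoint operators for the links of type ``knot $\sqcup$ unlink'' occurring just after the $0$-handles and just before the $2$-handles; and the compatibility of all maps in sight with the basepoint multiplication operators $x_i$. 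For the last point we use that $\Sigma$ is connected: joining the basepoints of $K_0$ and $K_1$ by an arc in $\Sigma$ (sliding the basepoint if necessary) and transporting the multiplication action along it, the cobordism map satisfies $\Sigma_*\circ x_0 = x_1\circ\Sigma_*$, so $\Sigma_*$ carries $\ker(x_0-\alpha)$ into $\ker(x_1-\alpha)$. Since for a knot this eigenspace is $1$-dimensional and spanned by $[\psi]$, we get $\Sigma_*[\psi(K_0)]\in\langle[\psi(K_1)]\rangle$ for free, and the whole first assertion reduces to showing $\Sigma_*[\psi(K_0)]\neq 0$.

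To prove non-vanishing I would track the image of $[\psi(K_0)]$ through the movie and maintain the invariant: \emph{at every stage the image has nonzero component in the joint $\alpha$-eigenspace, equal to a nonzero multiple of $\psi$ of the current diagram}. This holds at $K_0$ itself. A Reidemeister move is a filtered isomorphism commuting with all $x_i$, hence preserves the eigenspace decomposition; at the stages it occurs it identifies the $1$-dimensional joint $\alpha$-eigenspaces, so it merely rescales the running $\psi$ by a nonzero constant. A $1$-handle map commutes with the $x_i$ (each saddle intertwines the merged, or split-off, variables with the new one), hence carries the joint $\alpha$-eigenspace into the joint $\alpha$-eigenspace, and on that subspace it sends $\psi$ to a nonzero multiple of $\psi$ by \Cref{prop:1handle_psi_preserved}. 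A $2$-handle is the Frobenius counit $\epsilon$ applied to the variable of the split-off unknot $V$ it caps off (tensored with the identity on the rest); the $\alpha$-eigenvector of $\C[x_V]/\partial w$ is $\partial w(x_V)/(x_V-\alpha)$, whose leading term is $x_V^{n-1}$, and $\epsilon$ is nondegenerate with $\epsilon(x_V^{n-1})\neq 0$, so the counit does not kill the $V$-eigenvector; thus the running $\psi$ maps to a nonzero multiple of the $\psi$ of the capped link.

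The one step that requires genuine care --- and the step I expect to be the main obstacle --- is the $0$-handle: birthing an unknot $V$ sends a class $\xi$ to $\xi\otimes 1_V$, and $1_V$ is \emph{not} the $\alpha$-eigenvector $\partial w(x_V)/(x_V-\alpha)$ of $\C[x_V]/\partial w$, so ``$\psi\mapsto\psi$'' fails here outright. The fix is to decompose $1_V$ into its projections onto the $\alpha$-eigenspace and onto the complementary generalized eigenspaces of $x_V$: the $\alpha$-component of $[\psi(K_0)]\otimes 1_V$ is exactly $[\psi(K_0)]\otimes\bigl(\partial w(x_V)/(x_V-\alpha)\bigr)$, which by \Cref{lem:1-eigenspace_for_special_links} (applied to $K_0$ together with the unlink of unknots born so far) is a nonzero multiple of the $\psi$ of that link. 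Hence the invariant survives the $0$-handles too. This is precisely what the hypothesis about reordering the $1$-handles (so that the diagram is of type ``knot $\sqcup$ unlink'' immediately after all births and immediately before all deaths) and the eigenspace characterization of \Cref{lem:1-eigenspace_for_special_links} are there to provide. Chaining the invariant through the whole movie shows $\Sigma_*[\psi(K_0)]$ has nonzero joint $\alpha$-component, in particular is nonzero; combined with $\Sigma_*[\psi(K_0)]\in\langle[\psi(K_1)]\rangle$ and $[\psi(K_0)]\neq 0$, this gives $\Sigma_*\langle[\psi(K_0)]\rangle=\langle[\psi(K_1)]\rangle$.

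The quantum-degree assertion is independent of the $\psi$-bookkeeping. Reidemeister moves induce filtered isomorphisms of quantum degree $0$, while each $0$-, $1$-, and $2$-handle induces a map of a fixed quantum degree determined by the Frobenius structure on $\C[x]/\partial w$ (the unit, the (co)multiplication, and the counit). Summing these contributions over the movie and using the handle count for a connected genus-$g(\Sigma)$ cobordism between two knots (equivalently, $\chi(\Sigma)=-2g(\Sigma)$) gives that $\Sigma_*$ has quantum filtration degree $(n-1)g(\Sigma)$, completing the proof.
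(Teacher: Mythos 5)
Your overall strategy is the same as the paper's: track the image of $[\psi(K_0)]$ through the movie, using the joint $\alpha$-eigenspace projections to survive the $0$-handles, and $\psi\mapsto\psi$ (\Cref{prop:1handle_psi_preserved}) for the $1$-handles. The one genuinely different ingredient in your writeup is the opening reduction via the basepoint arc (which the paper proves separately as \Cref{prop:conc_invari_2} and uses only for the $x$-filtration): you observe $\Sigma_*\circ x_0 = x_1\circ\Sigma_*$ forces $\Sigma_*[\psi(K_0)]\in\langle[\psi(K_1)]\rangle$, so only non-vanishing is in question. That is a clean shortcut.

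However, there is a real gap in your non-vanishing argument, and it is located at the $2$-handles, not the $0$-handles. Your running invariant is ``the image has nonzero joint $\alpha$-component.'' This invariant is preserved by Reidemeister moves and $1$-handles because those maps commute with \emph{all} basepoint variables $x_i$ and hence respect the joint generalized eigenspace decomposition. But a $2$-handle eliminates the variable $x_V$ of the capped component, so it does \emph{not} respect that decomposition: a term lying in the $\alpha$-eigenspace for $x_0,\dots,\widehat{x_V},\dots$ but in some $\lambda\neq\alpha$ generalized eigenspace for $x_V$ is sent by $\epsilon$ to an element of the $\alpha$-eigenspace of the smaller link (since $\epsilon$ applied to the idempotent $e_\lambda$ is generically a nonzero scalar). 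Thus the $\alpha$-component of $\Sigma_*(\eta)$ is $\Sigma_*(\eta_\alpha)$ \emph{plus} contributions from $\eta_\perp$, and a priori these could cancel. Your computation that $\epsilon$ does not kill $e_\alpha$ shows $\Sigma_*(\eta_\alpha)\neq 0$ but does not rule out this cancellation.

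The paper sidesteps this precisely through the reordering hypothesis, whose role you misidentify: you say it ensures ``knot $\sqcup$ unlink'' immediately after the births and before the deaths, but that is already automatic from \Cref{lem:nice_movie_presentation} (births always give $K_0\sqcup U^k$; deaths always cap unknots off $K_1\sqcup U^\ell$). What the reordering actually buys is that the movie passes through a \emph{knot} $K_{1/2}$ after the initial merges. At that frame there is only one basepoint variable, the image lies in the (one-dimensional) $\alpha$-eigenspace of $K_{1/2}$, and the ``perp'' part vanishes outright: the merges send $\zeta\otimes e_{\lambda_1}\otimes\cdots\otimes e_{\lambda_k}$ to $\bigl(\prod e_{\lambda_j}(x_0)\bigr)\zeta$, which equals zero whenever some $\lambda_j\neq\alpha$ because $\zeta$ is an $\alpha$-eigenvector of $x_0$. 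From $K_{1/2}$ onward the class is a \emph{pure} nonzero multiple of $\psi$, and then \Cref{prop:1handle_psi_preserved}, Reidemeister invariance, and the counit computation carry it through to a nonzero multiple of $\psi(K_1)$ with no $\perp$ part available to cancel anything. Once you insert this purification step at $K_{1/2}$, your argument is complete and essentially identical to the paper's.
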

\begin{proof}
	First note that $2$-handle attachment gives an element $1 \in H^0_{\potential}(U) = \CC[x]/\potential$ for $U$ the unknot.  This element of course has non-zero projection onto the $\alpha$-eigenspace of $x$ (using the projector which is multiplication by $\potential(x)/(x-\alpha)$).
	
	Let us then write the link appearing just before all the $2$-handle attachments as $K_{3/4} = K_1 \sqcup U_1 \sqcup \cdots \sqcup U_k$, and let us write $D_{3/4}$ for the diagram of this link that appears in the presentation just after all the $1$-handle attachments.  We start by considering the part of the presentation of $\Sigma$ that takes $D_{3/4}$ to $D_1$.
	
	Since the isomorphisms induced by Reidemeister moves commute with the action of the $x_i$, $[\psi(D_1)]$ is mapped to a non-zero element $\phi \in H^0_{\partial w}(K_{3/4})$ that lies in the $\alpha$-eigenspace of $x_0$ and that has non-zero projections onto the $\alpha$-eigenspace of each $x_i$ for $1 \leq i \leq k$.
	
	Now we consider the $1$-handle attachments.  The final $k$ of these split some knot $K_{1/2}$ with diagram $D_{1/2}$ into the link $K_{3/4}$ with diagram $D_{3/4}$.  Since $1$-handle attachments also commute with the action of the $x_i$, we know that $\phi$ must be mapped by the presentation of this cobordism into the $\alpha$-eigenspace of $H_{\potential}(K_{1/2})$.  That it gets mapped to a non-trivial multiple of the class $[\psi(D_{1/2})]$ follows from the observation that 
	\begin{align*} 0 &\not= \left( \prod_{i-1}^{r} \frac{\potential (x_i)}{x_i - \alpha} \right) \phi  \in \bigcap_{i=0}^k \ker(x_i - \alpha : H_{\potential}(K_{3/4}) \rightarrow H_{\potential}(K_{3/4}))  = \langle [\psi(K_{3/4})] \rangle {\rm ,}
	\end{align*}
	and \Cref{prop:1handle_psi_preserved}.
	
	We now write $D_{1/4} = D_0 \sqcup U'_1 \sqcup \cdots \sqcup U'_{\ell}$ for the diagram occurring just after the $0$-handle additions, and $K_{1/4}$ for the link that it represents.  It follows from \Cref{prop:1handle_psi_preserved} and the fact that the Reidemeister isomorphisms commute with the action at basepoints, that $[\psi(D_{1/2})]$ gets mapped to a non-trivial element of
	\[\bigcap_{i=0}^\ell \ker(x_i - \alpha : H_{\potential}(K_{1/4}) \rightarrow H_{\potential}(K_{1/4})) = \langle [\psi(D_{1/4})] \rangle {\rm .} \]
	It is then enough to observe that the map corresponding to $0$-handle addition on an empty diagram takes the element $[\psi(U)]$ to a non-zero scalar.
	
	The statement about the quantum filtration degrees is automatic from the definitions of the maps corresponding to the elementary cobordisms (see \cite{wu3} for explicit forms of these maps).
\end{proof}

\begin{definition}
	\label{defn:simple_root_bound}
	For a knot diagram $D$ let us write $u(D)$ for the quantum filtration grading of the class $[\psi(D)] \in H_{\potential}(D)$.
	Let $s_{\potential, \alpha}(D) \in \Q$ be given by
	\[ s_{\potential, \alpha}(D) = \frac{u(D) - n + 1}{2(n-1)} {\rm .} \]
\end{definition}

\begin{proposition}
	\label{prop:simple_root_bound}
	The quantity $s_{\potential, \alpha}(D)$ given in \cref{defn:simple_root_bound} only depends on the concordance class of the knot $K$ represented by $D$.  Furthermore, if we write $g_*(K)$ for the smooth $4$-ball genus of $K$ we have
	\[ g_*(K) \geq \vert s_{\potential, \alpha}(D) \vert {\rm .} \]
\end{proposition}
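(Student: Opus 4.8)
The plan is to deduce both statements from the concordance-invariance properties of the Gornik class established in \Cref{prop:concordance_invariance}, following the template of the slice-torus arguments in \cite{lobb1, wu3}. First I would address the slice genus bound. Given a knot $K$ with $g_*(K) = g$, there is a connected genus-$g$ cobordism in $S^3 \times [0,1]$ from $K$ to the unknot $U$; reading it in both directions gives cobordisms $\Sigma$ from $U$ to $K$ and $\Sigma'$ from $K$ to $U$, each of genus $g$. By \Cref{lem:nice_movie_presentation} we may assume each cobordism has a movie presentation of the standard form, and by \Cref{prop:concordance_invariance} the induced map $\Sigma_*$ sends $\langle [\psi(U)] \rangle$ onto $\langle [\psi(K)] \rangle$ (and similarly for $\Sigma'_*$), while being a filtered map of quantum degree $(n-1)g$. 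Since $[\psi(U)]$ has quantum filtration grading $-(n-1)$ (the cochain group $C^0_{\partial w}(U) = \C[x]/\partial w$ has its generator $\partial w(x)/(x-\alpha)$ sitting in quantum degree $-(n-1)$ in our conventions), the map $\Sigma_*$ forces
\[ \gr_{\F}[\psi(K)] \geq \gr_{\F}[\psi(U)] - (n-1)g = -(n-1) - (n-1)g, \]
and applying the same reasoning to $\Sigma'$ in the other direction gives $\gr_{\F}[\psi(K)] \leq -(n-1) + (n-1)g$. Rearranging via \Cref{defn:simple_root_bound},
\[ \left| s_{\partial w, \alpha}(D) \right| = \left| \frac{\gr_{\F}[\psi(D)] - n + 1}{2(n-1)} \right| \leq \frac{(n-1)g}{2(n-1)} = \frac{g}{2} \leq g, \]
which is the desired inequality. (In fact this argument gives the sharper bound $|s_{\partial w, \alpha}(D)| \leq g_*(K)/2$; I would state it in whichever form matches the paper's conventions, noting that one could also run the cobordism argument with a genus-$g$ cobordism between $K_0$ and $K_1$ to get the relative statement $|s_{\partial w,\alpha}(D_0) - s_{\partial w,\alpha}(D_1)| \leq g_*(K_0,K_1)$, from which the slice bound is the special case $K_1 = U$.)

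For concordance invariance, I would specialize the cobordism argument to the case where $\Sigma$ is a concordance, i.e. $g(\Sigma) = 0$. Then $\Sigma_* : H_{\partial w}^0(K_0) \to H_{\partial w}^0(K_1)$ is a quantum-filtered map of degree $0$ carrying $\langle[\psi(K_0)]\rangle$ onto $\langle[\psi(K_1)]\rangle$, so $\gr_{\F}[\psi(K_1)] \geq \gr_{\F}[\psi(K_0)]$; running the reversed concordance gives the opposite inequality, hence equality, and therefore $s_{\partial w,\alpha}(D_0) = s_{\partial w,\alpha}(D_1)$. To see that $s_{\partial w,\alpha}$ is a genuine knot invariant in the first place (before even invoking concordance), I would observe that a sequence of basepoint-avoiding Reidemeister moves induces filtered isomorphisms commuting with the $x$-action, hence preserving both the $\alpha$-eigenspace $\langle[\psi(K)]\rangle$ and its quantum filtration grading; this is already remarked in the discussion preceding \Cref{prop:concordance_invariance}.

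The main obstacle is ensuring that \Cref{prop:concordance_invariance} genuinely applies, since its hypotheses require the $1$-handles to be ordered so that all merges precede all splits, and this is exactly what \Cref{lem:nice_movie_presentation} together with the connectedness of $\Sigma$ provides — but only for a \emph{connected} cobordism. For the slice genus bound this is automatic (a minimal-genus slice surface may be taken connected), and for concordance invariance a concordance is connected by definition, so this is not a real difficulty; the only care needed is to check that when we reverse a cobordism the reversed movie still satisfies the handle-ordering hypothesis, which it does after re-applying \Cref{lem:nice_movie_presentation} to the reversed cobordism. A secondary technical point is pinning down the exact quantum grading of $[\psi(U)]$ in the paper's normalization; this is a routine check using the explicit form of $\psi(U) = \partial w(x)/(x - \alpha)$ and the grading conventions of \cite{khr1}, and it is the input that produces the "$-n+1$" shift appearing in \Cref{defn:simple_root_bound}.
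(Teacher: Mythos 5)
Your overall strategy matches the paper's (the paper's proof is a one-liner: deduce everything from \Cref{prop:concordance_invariance} plus the value of $\gr_{\F}[\psi(U)]$), but you have a sign error in the key grading computation, and the ``sharper'' bound you derive from it is spurious.

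The paper states $\gr_{\F}[\psi(U)] = n-1$, not $-(n-1)$. Concretely, the unknot cochain group is $\C[x]/\partial w$ shifted so that $1$ sits in quantum degree $-(n-1)$ and $x^{n-1}$ in degree $n-1$; the Gornik element $\psi(U) = \partial w(x)/(x-\alpha)$ is a degree-$(n-1)$ polynomial whose leading term is $x^{n-1}$, so its filtration level is $+(n-1)$. You conflated the degree of the bottom generator $1$ with that of $\psi(U)$. Your sign matters: \Cref{defn:simple_root_bound} reads $s_{\partial w, \alpha}(D) = \frac{\gr_{\F}[\psi(D)] - n + 1}{2(n-1)}$, and with $\gr_{\F}[\psi(U)] = -(n-1)$ you would have $s_{\partial w, \alpha}(U) = -1$, whereas the unknot must give $0$; the ``$-n+1$'' in the definition is exactly the renormalization by $\gr_{\F}[\psi(U)] = n-1$.

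This error also quietly contaminates the final inequality. From $|\gr_{\F}[\psi(K)] - \gr_{\F}[\psi(U)]| \leq d$ (where $d$ is the quantum degree of $\Sigma_*$) you concluded $\bigl|\gr_{\F}[\psi(K)] - (n-1)\bigr| \leq d$, which is only a rearrangement if $\gr_{\F}[\psi(U)] = n-1$ --- i.e.\ it secretly uses the correct value rather than the one you stated, so your ``sharper bound $g/2$'' does not actually follow from your own premises. There is also a factor-of-two discrepancy to be aware of: the statement of \Cref{prop:concordance_invariance} says quantum degree $(n-1)g(\Sigma)$, but the proof of \Cref{prop:genus_cobordism_bounds} uses $2(n-1)g$ (consistent with each saddle having degree $n-1$ and a genus-$g$ cobordism between knots having $2g$ excess saddles); taking the latter together with $\gr_{\F}[\psi(U)] = n-1$ yields $|s_{\partial w,\alpha}(K)| \leq g$ on the nose, with no room for a sharper constant. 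The concordance-invariance half of your argument (genus-$0$ cobordism both ways gives equality of gradings) is correct and is precisely what the paper intends.
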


\begin{proof}
	This follows immediately from \cref{prop:concordance_invariance} and the observation that $u(U) = n-1$ for $U$ the $0$-crossing diagram of the unknot.
\end{proof}

We now return to the consideration of the case when the potential has the form $\potential = x^n - x^{n-1}$.

\begin{proposition}
	\label{prop:conc_invari_2}
	The map $\Sigma_*$ of \cref{prop:concordance_invariance} is induced by a cochain map of $x$-filtration degree $0$.
\end{proposition}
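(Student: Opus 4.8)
The plan is to track the $x$-filtration degree through the movie presentation used in the proof of Proposition~\ref{prop:concordance_invariance}, exactly as the quantum filtration degree was tracked there. The elementary cobordism maps (attachment of $0$-, $1$-, and $2$-handles, and the Reidemeister isomorphisms) are built out of the unit, multiplication, comultiplication and counit of the Frobenius algebra $R = \C[x]/\partial w$ on each circle of a resolution, together with the usual edge maps. The key observation is that for the potential $\partial w = x^n - x^{n-1}$ the root $0$ occurs with multiplicity $n-1$, so $x$ acts on $R$ with $0$ as an eigenvalue: concretely, $R$ decomposes as an $R$-module (equivalently, $x$ acts with a Jordan block of size $n-1$ at $0$ and a $1\times1$ block at $1$). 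One checks that each structural map of the Frobenius algebra --- unit, multiplication, comultiplication, counit --- is a map of $x$-filtration degree $\geq 0$ with respect to this filtration; multiplication by $x$ obviously preserves it, and the comultiplication $\Delta$ and counit $\varepsilon$ can be computed explicitly for $\partial w = x^n - x^{n-1}$ and seen to lower no $x$-power. Since all the elementary cobordism maps are built from these, and the $x$-filtration on $C_{\partial w}^0(D)$ is induced from the one on the Frobenius algebra placed at the basepoint's circle, each elementary cobordism map is of $x$-filtration degree $\geq 0$, hence $\Sigma_*$ is of $x$-filtration degree $\geq 0$.

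It remains to show the degree is exactly $0$ and not positive. For this I would run the same argument applied to the reverse cobordism $\bar\Sigma$ from $K_1$ to $K_0$: this also induces a map $\bar\Sigma_*$ of $x$-filtration degree $\geq 0$, and by Proposition~\ref{prop:concordance_invariance} it sends $\langle[\psi(K_1)]\rangle$ onto $\langle[\psi(K_0)]\rangle$. The composite $\bar\Sigma_* \circ \Sigma_*$ sends $[\psi(K_0)]$ to a non-zero multiple of itself. Since $[\psi(K_0)]$ is the $0$-eigenvector for $x$ (recalling that for this potential $\alpha = 0$, as the construction of $\upsln_n$ specialises the pair $(\partial w, \alpha)$ to $(x^n-x^{n-1}, 0)$), it has $x$-filtration degree exactly $n-1$, the top of the filtration, which cannot be raised. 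So both $\Sigma_*$ and $\bar\Sigma_*$ must act with $x$-filtration degree exactly $0$ on the line $\langle[\psi]\rangle$; and since the cobordism maps are graded maps of the whole complex, degree $0$ on one non-zero class forces degree $0$ overall. Alternatively, and perhaps more cleanly, I would simply compute the $x$-filtration degree of each of the five elementary cobordism maps and observe that each is $x$-degree exactly $0$ (the unit $1 \mapsto 1$ lands in $x^0$ and no lower, the counit is surjective onto $x^0$, etc.), so the composite is too.

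The main obstacle will be pinning down the behaviour of the comultiplication $\Delta: R \to R \otimes R$ and the counit (trace) $\varepsilon: R \to \C$ with respect to the $x$-filtration when $\partial w = x^n - x^{n-1}$ is non-separable --- the slick eigenvalue/idempotent decomposition available in the separable case of \cite{wu3} is no longer available, so one has to work directly with the basis $1, x, \dots, x^{n-1}$ and the formulas $\Delta(1) = \sum_i x^i \otimes x^{n-1-i}$-type expressions adjusted by the lower-order term $-x^{n-1}$ coming from the potential, and verify no negative $x$-power is introduced. This is a finite, explicit check but is the place where one genuinely uses the specific shape of the potential; everything else is bookkeeping parallel to the quantum-degree statement already proved in Proposition~\ref{prop:concordance_invariance}.
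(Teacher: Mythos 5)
There is a genuine gap, and more fundamentally a misidentification of where the content of the statement lies. Your plan rests on the belief that the $x$-filtration behaviour of the elementary cobordism maps depends on the specific potential $\partial w = x^n - x^{n-1}$ and therefore requires an explicit computation of the comultiplication $\Delta$ and the counit $\varepsilon$ in the basis $1, x, \ldots, x^{n-1}$. No such computation is needed, and that is not where the difficulty is. The $x$-filtration on $\CKR(D)$ is by definition multiplication by powers of $x$ acting at the basepoint, so the statement follows as soon as one knows that the maps induced by elementary cobordisms and Reidemeister moves commute with multiplication by $x$ at the basepoint. That commutativity is purely formal, independent of $\partial w$: for any handle attachment or Reidemeister move that takes place away from the basepoint circle it is tautological, and for a merge or split touching the basepoint circle it is just $R$-linearity of $m$ and the Frobenius bimodule property $\Delta(xa)=(x\otimes 1)\Delta(a)$, which hold for any Frobenius algebra. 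The paper's proof therefore simply picks a generic arc on $\Sigma$ with no horizontal tangencies joining the two basepoints, giving a continuous choice of basepoint in every intermediate frame (apart from finitely many singular moments where the basepoint sits at a crossing), and observes that multiplication at these basepoints commutes with the induced maps on cohomology; that is the whole argument.

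Two subsidiary issues. First, the claim that ``$\Delta$ and $\varepsilon$ lower no $x$-power'' is not well posed: the $x$-filtration on the cochain complex is induced by the action at the basepoint, not by $x$-degree at whichever circle a saddle or cap acts on, and a saddle acting at a non-basepoint circle is simply invisible to the filtration. Second, the final step of your second paragraph --- upgrading from ``$x$-filtration degree $\geq 0$'' to ``degree exactly $0$'' --- is chasing a distinction that is not there: ``of $x$-filtration degree $0$'' just means that $\Sigma_*$ preserves the $x$-filtration, which is the same thing as degree $\geq 0$; once that is established there is nothing further to prove. In particular the claimed inference ``degree $0$ on one non-zero class forces degree $0$ overall'' is not valid, but it is also not needed.
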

	
\begin{proof}
	Pick basepoints on $D_0$ and $D_1$ and connect these by a generic arc with no horizontal tangencies.  This gives a continuous choice of basepoints for each intermediate frame of the movie presentation (apart from those finite number of singular basepoints where the basepoint lies at a crossing).  Then the action of multiplication at intermediate basepoints either side of an elementary cobordism or singular basepoint commutes with the induced cochain map.
\end{proof}

With this in place we are now ready to begin deducing our main theorems as stated in \hyperref[sec:introduction]{the introduction}.

\begin{proof}[Proof of \Cref{thm:upsln_concordance_invariant}]
	 \Cref{prop:concordance_invariance} and \Cref{prop:conc_invari_2} imply that a concordance between knots $K_0$ and $K_1$ (that is a genus $0$ knot cobordism between $K_0$ and $K_1$) gives rise to a map $H_{\potential}(K_1) \rightarrow H_{\potential}(K_0)$ induced by a cochain map which preserves both the quantum and the $x$-filtrations.  Hence it preserves the $\GG_t$-filtration for all $t \in [0,1]$.  Furthermore, this map takes $\langle [ \psi(K_1) ] \rangle$ to $\langle [ \psi(K_0) ] \rangle$.  Likewise, by turning the concordance upside down we get a $\GG_t$-preserving map $H_{\potential}(K_0) \rightarrow H_{\potential}(K_1)$ taking $\langle [ \psi(K_0) ] \rangle$ to $\langle [ \psi(K_1) ] \rangle$.
	
	Since for any knot $K$ and any $t \in [0,1]$, the definition of $\upsln_n(K)(t)$ depends only on the $\GG_t$-grading of $\langle [ \psi (K) ] \rangle$, we have $\upsln_n(K_0) = \upsln_n(K_1)$.
\end{proof}

\Cref{thm:slicegenusbounds} will follow immediately from \Cref{prop:genus_cobordism_bounds}, whose proof we give now.

\begin{proof}[Proof of \Cref{prop:genus_cobordism_bounds}]
	Suppose that we have a genus $g$ cobordism $\Sigma$ from $K_0$ to~$K_1$.  We have seen that there exists a presentation of $\Sigma$ so that
	\[ \Sigma_* \langle [\psi (K_1)] \rangle = \langle [\psi(K_0)] \rangle {\rm .} \]
	Furthermore $\Sigma_*$ is induced by a cochain map which has degree $2(n-1)g$ with respect to the quantum filtration and degree $0$ with respect to the $x$-filtration.  Therefore, according to \Cref{defn:slopingfiltration}, $\Sigma_*$ has degree $2t(n-1)g$ with respect to the filtration $\GG_t$ for $t \in [0,1]$.
	
	We next turn to our definition of $\gamma$ from \Cref{defn:almostgimel}.  Since we know that the $\GG_t$-filtration is Reidemeister invariant, we can refer to $\gamma(K)$ for $K$ a knot.  There is a commutative square of maps in which the vertical arrows are induced by cobordism and the horizontal arrows by inclusion of complexes:
	\[\begin{CD}
		H(\GG_t^{\ell} \CKR (K_1)) @>>> \HKR(K_1) \\
		@VVV               @VVV \\
		H(\GG_t^{\ell+2t(n-1)g} \CKR (K_0)) @>>> \HKR(K_0) {\rm .}
	\end{CD}\]
	Hence we see that
	\[ \gamma(K_1)(t) + 2t(n-1)(g) \geq \gamma(K_0)(t) {\rm .}  \]
	Substituting this inequality into \Cref{defn:gimel} we see that
	\begin{align*}
	\upsln_n(K_1)(t) &= \frac{1}{2(n-1)} [\gamma(K_1)(t) - \gamma(U)(t)] \\
	&\geq \frac{1}{2(n-1)} [ \gamma(K_0)(t) - 2t(n-1)(g) - \gamma(U)(t) ] \\
	&= \upsln_n(K_0)(t) - tg {\rm ,}
	\end{align*}
	thus giving half of the inequality in \Cref{prop:genus_cobordism_bounds}. For the other half, consider turning $\Sigma$ upside down to get a cobordism from $K_1$ to $K_0$.  Then, arguing as before,
	\[ \pushQED{\qed}	\upsln_n(K_0)(t) \geq \upsln_n(K_1)(t) - tg {\rm .} \qedhere \popQED \]
\renewcommand{\qedsymbol}{}
\end{proof}

\begin{proof}[Proof of \Cref{thm:slicegenusbounds}] A slice surface of genus $g$ for a knot $K$ gives rise by puncturing the surface to a knot cobordism of genus $g$ between $K_0 : = K$ and $K_1 := U$, the unknot.  Now apply \Cref{prop:genus_cobordism_bounds}, and note that \Cref{defn:gimel} implies that $\upsln_n(U)$ is identically~$0$.
\end{proof}

We are now in a position where we can rapidly deduce that $\upsln_n$ is linear on some classes of knots.

\begin{proof}[Proof of \Cref{prop:boring_for_sqzd_knots}]
	The result is deduced immediately from \Cref{thm:slicegenusbounds} and from two facts.
	
	The first fact is that for any torus knot $T(p,q)$, $\upsln_n(T(p,q))$ is linear and of slope $-(p-1)(q-1)/2$ (note that the slope is, in absolute value, the slice genus of $T(p,q)$).  For positive torus knots this is straightforward, since the cochain complex is supported in non-negative gradings.  This means that the Gornik generator $\psi$ is cohomologous only to itself and so the statement can be deduced just at the cochain level.  For the negative case, consider the negative torus knot $T(p,-q)$ (where $p,q>0$) in the usual way as a diagram of a $p$-stranded braid closure.  Then note that, up to an overall shift in quantum grading, $\CKR^0(T(p,-q))$ and $d^{-1} (\CKR^{-1}(T(p,-q)))$ are isomorphic to $\CKR^0(T(p,-1))$ and $d^{-1} (\CKR^{-1}(T(p,-1)))$, respectively.  Since $T(p,-1)$ is a diagram of the unknot $U$, it follows that $\upsln_n (T(p,-q))$ differs from $\upsln_n(U)$ by an overall shift.  Since $\upsln_n(U)$ is the zero function, it follows that $\upsln_n(T(p,-q))$ is linear.  Finally, we know from \Cref{thm:derivativeatzero} that $\upsln_n(T(p,-q))'(0) = - \upsln_n(T(p,q))'(0)$.
	
	The second fact is that any knot $K$ which is either quasi-positive, quasi-negative, or homogeneous can be exhibited as a slice of a minimal-genus knot cobordism between a positive and negative torus knot (see \cite{lew2} for explicit constructions).
\end{proof}

We now turn to the proof of \cref{thm:derivativeatzero}.  We advise the reader to refresh her knowledge of the meaning of the function $\gamma$ as given in \cref{defn:almostgimel}, as well as of the graphical characterization of the $\GG_t$-filtration given just before the definition of~$\gamma$.

\begin{proof}[Proof of \cref{thm:derivativeatzero}]
	Let us write $x^{n-1} \CKR(D)$ for the subcomplex of elements of $x$-grading $n-1$ of the Khovanov-Rozansky complex of the based knot diagram $D$.  This inherits the quantum filtration by restriction.  We write
	\[ r := \min \{ \ell : [\psi] \in \im(H^0(\F^\ell x^{n-1} \CKR(D)) \rightarrow \HKR^0(D)) \} {\rm ,}\]
	for the \emph{reduced quantum grading} (here `reduced' refers to the `reduced' subcomplex $x^{n-1} \CKR(D)$).
		
	For small values of $t$, since the cochain complex is finitely generated, it follows that if the line of slope $\frac{t}{1-t}$ through $(r,n-1)$ intersects $(\ell,-\ell)$ then we have $\gamma(D)(t) = \ell$.
	
	Hence for small values of $t$ we have
	\[ \frac{n-1+\gamma(D)(t)}{r-\gamma(D)(t)} = \frac{t}{1-t} \]
	so
	\[ \gamma(D)(t) = rt - (1-t)(n-1) \]
	and hence we have
	\[ \gamma(D)'(0) = r + (n-1) {\rm .} \]
	
	Then, using the fact that for $U$ the unknot we have
	\[ \gamma(U) = (2t-1)(n-1) {\rm ,} \]
	we see that
	\[ \upsln_n(D)'(0) = r - (n-1) {\rm .} \]
	
	So it remains to show that this is a concordance homomorphism.
	In particular, we need to show that $\upsln_n(K_1 \hash K_2)'(0) = \upsln_n(K_1)'(0) + \upsln_n(K_2)'(0)$ for any pair of knots $K_1$ and $K_2$.
	
	We shall follow the second half of the proof of Theorem 2.8 in \cite{lewarklobb} and write $D_1$ and $D_2$ for two diagrams with marked points and $D = D_1 \hash D_2$ for the marked diagram formed by taking connect sum at the marked points.  We shall write $r_1$, $r_2$, and $r$ in the obvious way for the reduced quantum gradings.
	
	We write $\Phi$ for the map
	\[ \Phi : \CKR(D_1) \otimes \CKR(D_2) \rightarrow \CKR(D) \]
	induced by $1$-handle addition splitting $D$ into $D_1 \sqcup D_2$.  This map $\Phi$ restricts to map of subcomplexes
	\[ \widetilde{\Phi} : (x_1^{n-1} \CKR(D_1)) \otimes (x_2^{n-1} \CKR(D_2)) \rightarrow (x^{n-1} \CKR(D)) {\rm .} \]
	Following the argument in \cite{lewarklobb} (there replacing $\potential(x)$ by $x^n - x^{n-1}$ and $\alpha$ by $1$), we see that $\widetilde{\Phi}$ gives a filtered degree $n-1$ isomorphism of cochain complexes (with filtered degree $1-n$ inverse).  Hence we have
	\[ r_1 + r_2 - (n-1) = r \]
	so that
	\[ (r_1 - (n-1)) + (r_2 - (n-1)) = r - (n-1) \]
	as required.
\end{proof}

Next we prove two propositions that will allow us to conclude that we have quasi-additivity of $\upsln_n$.  The first says that the graph of $\upsln_n$ lies in the cone with apex the origin and two sides given by the slope at $0$ and the value at $1$.  The second is a boundedness result on the size of such a cone.

\begin{proposition}
	\label{prop:supportinacone}
	For any knot $K$ and $t \in [0,1]$ we have that
	\[ t\upsln_n(K)(1) \leq \upsln_n(K)(t) \leq t\upsln_n'(K)(0) {\rm .} \]
\end{proposition}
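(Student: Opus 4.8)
The plan is to deduce both inequalities from a single structural fact: that $\upsln_n(K)$ is a concave piecewise linear function on $[0,1]$ with $\upsln_n(K)(0)=0$. Granting this, the left-hand inequality is concavity applied to the chord joining $(0,0)$ and $(1,\upsln_n(K)(1))$, and the right-hand inequality is the statement that a concave function lies on or below its supporting line at the left endpoint, which here passes through the origin with slope $\upsln_n(K)'(0)$.

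So the work is to establish concavity, and for this I would first analyse $\gamma(D)$. Given a cocycle $z\in\CKR^0(D)$, let $j(z)$ be its quantum filtration grading (the least $j$ with $z\in\F^j\CKR^0(D)$) and $k(z)\le n-1$ its $x$-degree (the greatest $k$ with $z\in x^k\CKR^0(D)$). Unwinding \cref{defn:slopingfiltration}, and using that $t\ge 0\ge t-1$ forces the minimum of $t(k+j)-k$ over the admissible $j,k$ to be attained at $j=j(z)$, $k=k(z)$, one sees that $z\in\G_t^{\ell}\CKR^0(D)$ precisely when $\ell\ge g_z(t):=(j(z)+k(z))\,t-k(z)$, an affine function of $t$. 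By \cref{defn:almostgimel} it follows that
\[ \gamma(D)(t)=\min\{\,g_z(t) : z\in\CKR^0(D)\text{ a cocycle with }[z]=[\psi(D)]\,\}. \]
Since $\CKR(D)$ is finitely generated, only finitely many pairs $(j(z),k(z))$ can occur here ($k(z)\in\{0,\dots,n-1\}$ and the quantum gradings present are bounded), so $\gamma(D)$ is a minimum of finitely many affine functions; hence it is concave and piecewise linear, and it is finite because $[\psi(D)]\ne 0$ (\cref{prop:psi_is_not_a_coboundary}). I expect this reduction to finitely many affine pieces to be the only step needing care; the rest is formal.

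To conclude, $\gamma(U)(t)=(n-1)(2t-1)$ is affine, so by \cref{defn:gimel} the function $\upsln_n(K)=\tfrac{1}{2(n-1)}(\gamma(K)-\gamma(U))$ is concave on $[0,1]$, being a positive multiple of the difference of the concave function $\gamma(K)$ and the affine function $\gamma(U)$. It vanishes at $t=0$ by the normalisation imposed in \cref{sec:defs} (equivalently: $\psi(D)$ realises the maximal $x$-degree $n-1$, so $\gamma(D)(0)=-(n-1)=\gamma(U)(0)$). Finally, for $t\in[0,1]$, concavity together with $\upsln_n(K)(0)=0$ gives
\[ \upsln_n(K)(t)\ \ge\ (1-t)\,\upsln_n(K)(0)+t\,\upsln_n(K)(1)\ =\ t\,\upsln_n(K)(1) \]
and
\[ \upsln_n(K)(t)\ \le\ \upsln_n(K)(0)+t\,\upsln_n(K)'(0)\ =\ t\,\upsln_n(K)'(0), \]
which are exactly the claimed bounds.
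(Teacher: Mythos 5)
Your proof is correct, and it isolates the structural fact that underlies the paper's argument but is never quite named there: that $\gamma(D)$ (hence $\upsln_n$) is concave. The reduction $\gamma(D)(t)=\min_z g_z(t)$ over cocycle representatives $z$ of $[\psi(D)]$, with $g_z(t)=(j(z)+k(z))t-k(z)$ affine, and the finiteness of the set of pairs $(j(z),k(z))$, are both sound; together with the normalisation $\upsln_n(0)=0$ (which you justify correctly via $k(\psi(D))=n-1$), concavity delivers both inequalities in one line each.

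The paper's route is different in presentation though not in spirit. Rather than stating concavity, it works with the geometric picture of a line of slope $t/(1-t)$ pivoting on a finite set of integer points in $\Z\times(\Z\cap[0,n-1])$, pins down the first pivot $(r,n-1)$ and the final pivot $(u,0)$ (where $r$ and $u$ are the reduced and unreduced quantum gradings of $[\psi]$), and then argues that the relevant line must cross the L-shaped path from $(u,0)$ up to $(u,n-1)$ and across to $(r,n-1)$; parametrising the crossing height by $h\in[0,n-1]$ and using $h\le n-1$ together with \eqref{eqn:thing1} yields the two bounds. What the paper's version buys is that the intermediate quantities $r$, $u$, and $h$ carry over directly to the neighbouring \cref{prop:slope_vs_value_at_1}, so the two propositions share a computation. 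What your version buys is brevity and generality: once concavity and $\upsln_n(0)=0$ are in hand, the cone constraint is immediate, with no need for the auxiliary coordinate $h$ or the specific first and last pivot points. Either argument is complete.
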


\begin{proposition}
	\label{prop:slope_vs_value_at_1}
	For any knot $K$ we have that
	\[ \upsln'_n(K)(0) - 1 \leq \upsln_n(K)(1) \leq \upsln_n'(K)(0) {\rm .} \]
\end{proposition}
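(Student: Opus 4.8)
The plan is to express $\upsln_n(1)$ and the right-hand slope $\upsln_n'(0)$ (for a fixed knot, with chosen diagram $D$) through a single pair of integers attached to $[\psi(D)]$: its quantum filtration grading $\gr_{\F}[\psi(D)]$ of \cref{defn:simple_root_bound}, and the \emph{reduced} quantum degree
\[ r = \min \{ \ell : [\psi(D)] \in \im(H^0(\F^{\ell} x^{n-1}\CKR(D)) \rightarrow \HKR^0(D)) \} \]
introduced in the proof of \cref{thm:derivativeatzero}. Modulo the arithmetic below, \cref{prop:slope_vs_value_at_1} is then equivalent to the elementary estimate $r - 2(n-1) \leq \gr_{\F}[\psi(D)] \leq r$.

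To set up that reduction, I would first note that at $t = 1$ the inequality $\ell \geq t(k+j) - k$ of \cref{defn:slopingfiltration} reads $\ell \geq j$, with the $x$-filtration index dropping out; hence $\G_1^{\ell}\CKR(D) = \F^{\ell}\CKR(D)$ and so $\gamma(D)(1) = \gr_{\F}[\psi(D)]$. On the other hand, the computation in the proof of \cref{thm:derivativeatzero} gives $\gamma(D)(t) = rt - (1-t)(n-1)$ for $t$ near $0$, so $\gamma(D)'(0) = r + (n-1)$. Feeding these, together with $\gamma(U)(1) = n-1$ and $\gamma(U)'(0) = 2(n-1)$ (from $\gamma(U)(t) = (n-1)(2t-1)$), into \cref{defn:gimel} yields $\upsln_n(1) - \upsln_n'(0) = \frac{1}{2(n-1)}\bigl(\gr_{\F}[\psi(D)] - r\bigr)$, which lies in $[-1,0]$ exactly when the displayed estimate holds.

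The upper bound $\gr_{\F}[\psi(D)] \leq r$ is immediate, since $x^{n-1}\CKR(D)$ sits inside $\CKR(D)$ as a subcomplex carrying the restricted quantum filtration, compatibly with the comparison maps to $\HKR^0(D)$: any level at which $[\psi(D)]$ is hit from the reduced subcomplex is also one at which it is hit from the full complex. For the lower bound $r \leq \gr_{\F}[\psi(D)] + 2(n-1)$ I would take a cocycle $\psi' \in \F^{\ell}\CKR^0(D)$ representing $[\psi(D)]$ with $\ell = \gr_{\F}[\psi(D)]$ and pass to $x^{n-1}\psi'$. This is again a cocycle (the differential is $R$-linear), it lies in $x^{n-1}\CKR^0(D)$ by construction, and it still represents $[\psi(D)]$: indeed $\psi(D)$ is a $1$-eigenvector for the basepoint action of $x$ (as $\alpha = 1$ for the potential $x^n - x^{n-1}$), so $x^{n-1}[\psi(D)] = [\psi(D)]$. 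Since multiplication by $x$ is a quantum-filtered endomorphism of degree $2$, we get $x^{n-1}\psi' \in \F^{\ell + 2(n-1)}x^{n-1}\CKR^0(D)$, whence $r \leq \ell + 2(n-1)$.

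I expect the main obstacle to be purely bookkeeping: keeping the normalising factor $\frac{1}{2(n-1)}$, the linear function $\gamma(U)$, and the two invariants $\gr_{\F}[\psi(D)]$ and $r$ mutually consistent, and confirming that $x$ raises rather than lowers the quantum filtration so that the shift in the last step is genuinely $+2(n-1)$; a useful sanity check is $\gr_{\F}[\psi(U)] = n-1$ for the $0$-crossing unknot, where $\psi(U) = x^{n-1}$. Once these are fixed, both inequalities reduce to one-line arguments.
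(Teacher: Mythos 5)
Your proposal is correct and follows essentially the same route as the paper: both reduce the claim to the estimate $r - 2(n-1) \leq u \leq r$ (where $u = \gr_{\F}[\psi(D)]$), established via the inclusion $x^{n-1}\CKR(D) \hookrightarrow \CKR(D)$ of quantum-filtered degree $0$ and the multiplication map $\CKR(D) \xrightarrow{x^{n-1}} x^{n-1}\CKR(D)$ of quantum-filtered degree $2(n-1)$, and then unwind $\gamma'(0) = r + (n-1)$, $\gamma(1) = u$, and the normalization against $\gamma(U)$. You spell out the eigenvector argument justifying that $x^{n-1}\psi'$ still represents $[\psi]$, which the paper leaves implicit, but it is the same proof.
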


To prove these we shall refer to the function $\gamma$ given in \cref{defn:almostgimel}, as well as the graphical characterization of the $\GG_t$-filtration given just before \cref{defn:almostgimel}.

\begin{proof}[Proof of \cref{prop:slope_vs_value_at_1}]
	Let us again write $x^{n-1} \CKR(D)$ for the subcomplex of elements of $x$-gradings $n-1$.  We consider the following two maps of complexes.
	\[ x^{n-1} \CKR(D) \hookrightarrow \CKR(D) \stackrel{x^{n-1}}{\longrightarrow} x^{n-1} \CKR(D) {\rm .} \]
	The first of these maps is inclusion and is filtered of quantum degree $0$, the second of these maps is filtered of quantum degree $2(n-1)$.  Hence, taking our definition of~$r$ from the proof of \cref{thm:derivativeatzero} and our definition
 of $u$ from \cref{defn:simple_root_bound} (dropping `$(D)$' to lighten notation),
	we must have
	\[ r - 2(n-1) \leq u \leq r {\rm .} \]
	Now we have already computed that $\gamma'(0) = r + (n-1)$, and by definition we have $\gamma(1) = u$, hence we have
	\[ \gamma'(D)(0) - 3(n-1) \leq \gamma(D)(1)  \leq \gamma'(D)(0) - (n-1) {\rm .} \]
	Finally we use the fact that for unknot $U$ we have $\gamma(U)(t) = (2t-1)(n-1)$ and \cref{defn:gimel} giving $\upsln_n$ in terms of $\gamma$.
	\begin{align*}\pushQED{\qed}
		\upsln_n'(D)(0) - 1 &= \frac{1}{2(n-1)} (\gamma'(D)(0) - \gamma'(U)(0)) - 1 \\
		&= \frac{1}{2(n-1)} (\gamma'(D)(0) - 4(n-1))\\
		&\leq \frac{1}{2(n-1)} (\gamma(D)(1) - (n-1)) =\frac{1}{2(n-1)} (\gamma(D)(1) - \gamma(U)(1)) \\
		&= \upsln_n(D)(1) \\
		&\leq \frac{1}{2(n-1)} (\gamma'(D)(0) - 2(n-1)) = \frac{1}{2(n-1)} (\gamma'(D)(0) - \gamma'(U)(0)) \\
		&= \upsln'_n(D)(0) {\rm .}
\qedhere\popQED
	\end{align*}
\renewcommand{\qedsymbol}{}
\end{proof}

\begin{proof}[Proof of \cref{prop:supportinacone}]
	Let $u$ and $r$ have the same meaning as in the proof directly above, and let us consider $\gamma(D)$.  We shall think of this following the graphical description given just before \cref{defn:almostgimel}.
	
	For small $t$, the line of slope $\frac{t}{1-t}$ through $(r,n-1)$ intersects the line given by $x+y=0$ in $(\gamma(D)(t), -\gamma(D)(t))$.  On the other hand, for values of $t$ close to~$1$, $(\gamma(D)(t), -\gamma(D)(t))$ lies on the line of slope $\frac{t}{1-t}$ through $(u,0)$.  One sees that $(r,n-1)$ is the first pivot point, and $(u,0)$ is the final pivot point.  In general, one computes $\gamma(D)(t)$ by finding the intersection of a line of slope $\frac{t}{1-t}$ with $x+y=0$, and as $t$ varies from $0$ to $1$, this line pivots on a finite number of integer points in $\Z \times (\Z \cap [0,n-1])$.
	
	Since we know the first pivot point and the final pivot point, it follows that for any $t$, $(\gamma(D)(t), -\gamma(D)(t))$ must lie on a line of slope $\frac{t}{1-t}$ which intersects both the straight line segment between $(u,0)$ and $(u,n-1)$, and the straight line segment between $(u,n-1)$ and $(r, n-1)$.  Suppose such a line runs through the point $(u,h)$ for $h \in [0,n-1]$.  Then, since the line must also run through the segment between $(u,n-1)$ and $(r, n-1)$, it follows that the slope $\frac{t}{1-t}$ must satisfy
	\[ \frac{t}{1-t} \geq \frac{n-1-h}{r-u} {\rm ,} \]
	which implies that
	\begin{align}
        \tag{$\ddagger$}
	\label{eqn:thing1}
	h &\geq n - 1 - \frac{t}{1-t}(r-u) {\rm .}
	\end{align}
	
	Now this line intersects $x+y=0$ at $(\gamma(D)(t), -\gamma(D)(t))$, hence
	\[ \frac{h+ \gamma(D)(t)}{u-\gamma(D)(t)} = \frac{t}{1-t} \]
	which implies that
	\[ \gamma(D)(t) = t(u+h) - h {\rm .} \]
	
	Hence we can compute $\upsln_n$ using \cref{defn:gimel}.
	\begin{align*}
		\upsln_n(D)(t) &= \frac{1}{2(n-1)}[t(u+h) - h - (2t-1)(n-1)] \\
		&= \frac{1}{2(n-1)}[h(t-1) + tu - (2t-1)(n-1)] {\rm .}
	\end{align*}
	
	Now we know that $h \leq n-1$ and we know from the proof of \cref{prop:slope_vs_value_at_1} that $u \geq r - 2(n-1)$, so we have
	\begin{align*}
		\upsln_n(D)(t) &\geq \frac{1}{2(n-1)}[(n-1)(t-1) + tu - (2t-1)(n-1)] \\
		&= \frac{t}{2(n-1)}[ u - (n-1). ]
	\end{align*}
	But we also know by definition that $u = \gamma(D)(1)$ so
	\begin{align*}
		\upsln_n(D)(1) &= \frac{1}{2(n-1)} [\gamma(D)(1) - \gamma(U)(1)] \\
		&= \frac{1}{2(n-1)} [u - (n-1)] {\rm .}
	\end{align*}
	Hence we deduce the first inequality of \cref{prop:supportinacone}.
	
	Now we use \cref{eqn:thing1} to conclude the remainder of the proposition.  We have
	\begin{align*}
		\upsln_n(D)(t) &= \frac{1}{2(n-1)}[h(t-1) + tu - (2t-1)(n-1)] \\
		&\leq \frac{1}{2(n-1)}[(n-1)(t-1) +t(r-u) + tu - (2t-1)(n-1)] \\
		&= \frac{t}{2(n-1)}[  r - (n-1)  ] {\rm .}
	\end{align*}
	But we computed in the proof of \cref{prop:slope_vs_value_at_1} that $\gamma'(0) = r + (n-1)$ so
	\begin{align*}
		\upsln'_n(D)(0) &= \frac{1}{2(n-1)}[ \gamma'(D)(0) - \gamma'(U)(0) ] \\
		&= \frac{1}{2(n-1)} [ r + (n-1) - 2(n-1) ] = \frac{1}{2(n-1)}[  r - (n-1)  ] {\rm ,}
	\end{align*}
	and hence we deduce the second inequality.
\end{proof}

Quasi-additivity of $\upsln_n$ now follows immediately.

\begin{proof}[Proof of \cref{thm:quasi-homomorphism}]
	\cref{prop:supportinacone} says that the graph of $\upsln_n(K)$ is supported in a cone determined by $\upsln_n'(K)(0)$ and $\upsln_n(K)(1)$.  Then \cref{prop:slope_vs_value_at_1} says that the cone can be taken to be the one given by the lines through the origin of slope $\upsln_n'(K)(0)$ and of slope $\upsln_n'(K)(0) - 1$.  But we know from \cref{thm:derivativeatzero} that $\upsln_n'(0)$ is a knot concordance homomorphism.
\end{proof}

Next we show that $\upsln_2$ contains exactly the same information as Rasmussen's $s$ invariant.  The essential point is that Rasmussen's invariant can be defined either from the average of two gradings in the unreduced Khovanov cohomology (as was done in \cite{ras}), or from a single grading in the reduced Khovanov cohomology.

\begin{proof}[Proof of \cref{prop:sl2isboring}]
	We fix now $n=2$, $\potential = x^2 - x$, and take our definitions of $r$ and $u$ from the proofs of the preceding propositions.
	
	The cohomology $\CKR(D)$ is then $2$-dimensional and supported in two quantum filtration gradings differing by 2, the average of which is $-s(D)$ (here the minus sign is introduced by a different convention in $\SL_2$ Khovanov-Rozansky cohomology compared to Khovanov cohomology).  The number $u$ is exactly the lower of these two filtration gradings so $u = -s(D)-1$.
	
	On the other hand, the cohomology of $x \CKR(D)$ is $1$-dimensional and supported in quantum filtration grading $-s(D) -1$, and this is exactly the number $r$.  Hence we have $r=u=-s(D)-1$.
	
	Now, considering the graphical definition of $\upsln_2(D)(t)$ in terms of lines of slope $\frac{t}{1-t}$, the family of lines pivots first at $t=0$ on the point $(r,1) = (-s(D)-1,1)$ and finally at $t=1$ on the point $(u,0) = (-s(D)-1,0)$.  Hence there cannot be any intermediate pivot points, from which we see that $\upsln_2(D)$ is linear, and the slope can be computed as $\frac{-s(D)}{2}$.
\end{proof}

Finally, we argue that $\upsln_n(1)$ is superadditive with respect to connect sum.

\begin{proof}[Proof of \cref{thm:superadditiveat1}]
	Let $D_1 \sqcup D_2$ be the disjoint union of the knot diagrams $D_1$ and $D_2$, and let $D$ be a knot diagram resulting from adding a $1$-handle connecting the two disjoint pieces.
	
	Now, following our notation in previous proofs, we write $u(D)$ for the quantum filtration grading of $[\psi(D)]$ in $\HKR(D)$ (and similarly for $D_1$, $D_2$, and $D_1 \sqcup D_2$).
	
	The $1$-handle addition induces a quantum filtered degree $n-1$ cochain map
	\[ h: \CKR(D) \rightarrow \CKR(D_1 \sqcup D_2) {\rm .}\]
	that, in cohomology, takes $[\psi(D)]$ to a non-zero multiple of $[\psi(D_1) \tensor \psi(D_2)]$.  It follows that
	\begin{align*}
		u(D_1) + u(D_2) &= u(D_1 \sqcup D_2) \\
		&\leq u(D) + n - 1.
	\end{align*}
	
	Now we have
	\begin{align*}\pushQED{\qed}
		\upsln_n(D_1)(1) + \upsln_n(D_2)(1) &= \frac{1}{2(n-1)} [u(D_1) - (n-1)] + \frac{1}{2(n-1)} [u(D_2) - (n-1)] \\
		&= \frac{1}{2(n-1)}[ (u(D_1) + u(D_2) - (n - 1)) - (n - 1)] \\
		&\leq \frac{1}{2(n-1)} [u(D) - (n-1)] = \upsln_n(D)(1).
\qedhere\popQED
	\end{align*}
\renewcommand{\qedsymbol}{}
\end{proof}

\section{Equivariant cohomology and concordance}
\label{sec:equi}
\newcommand{\rnmod}{\ensuremath{R_n}\textrm{-Mod}}
The goal of this section is to extract a smooth concordance invariant directly from the equivariant $\SL_n$ cochain complex of a knot.
It will unify all previously constructed concordance invariants coming from versions of $\SL_n$ cohomology.

First, let us give more details on equivariant cohomology, which was only briefly mentioned in \cref{sec:example}.
There is a version of $\SL_n$ cohomology for every monic polynomial $\potential$ of degree $n$.
Treating the coefficients of $\potential$ as formal variables yields the so-called \emph{equivariant cohomology} \cite{krasnerEquivariant}.
The equivariant $\SL_n$ complex associated to a basepointed link diagram $D$ is a finitely generated complex $C_{U(n)}(D)$ of free $R_n$-modules,
where $R_n$ is the graded $\mathbb{C}$-algebra
\[
R_n = \mathbb{C}[x, a_0, \ldots, a_{n-1}] / (a_0 + a_1x + \ldots + a_{n-1}x^{n-1} + x^n) \\
\]
with grading $\deg x = 2, \deg a_i = 2(n - i)$. Note that there is a graded isomorphism
$g_n: R_n \to \mathbb{C}[x, a_1, \ldots, a_{n-1}]$.
Two diagrams of the same basepointed link (in particular, two diagrams differing just by choice of basepoint on the same component)
have homotopy equivalent equivariant complexes.

One may evaluate at some $\xi \in \mathbb{C}^{n}$, i.e.\ apply the homomorphism $\ev_{\xi}: R_n \to \mathbb{C}[x]/\potential$
that sends $a_i \mapsto \xi_{i+1}$ for $i\in\{0,\ldots, n-1\}$. Applying $\ev_{\xi}$ to $C_{U(n)}(D)$
recovers the filtered complex $C_{\potential}(D)$ with $\potential = \xi_0 + \xi_1x + \ldots + x^n$.
It will also prove useful to evaluate partially at some $\xi \in \mathbb{C}^{n-1}$, i.e.\ apply the homomorphism $\ev'_{\xi}: R_n \to \mathbb{C}[x]$ that is the composition of $g_n$ and sending $a_i \mapsto \xi_i$ for $i\in\{1,\ldots,n-1\}$.
This yields a complex of free filtered $\mathbb{C}[x]$-modules.

Let us introduce some further notation. Denote by \rnmod{} the
category of finitely generated graded $R_n$-modules and grading-preserving homomorphisms.
Let $\mathcal{C}(\rnmod)$ be the category of finitely generated cochain complexes over $\rnmod$,
and $\mathcal{C}_f(\rnmod)$ its full subcategory of cochain complexes of shifted free modules
(a \emph{shifted free module} is a sum of copies of $R_n$ with various grading shifts).
Note that $C_{U(n)}(D) \in \mathcal{C}_f(\rnmod)$.
Krasner proved the homotopy type of $C_{U(n)}(D)$ to be invariant under Reidemeister moves.
However, inspection of his proof reveals that indeed the following form of invariance is shown.
\begin{lemma}\label{thm:acyclic_summands}
Let $D$ and $D'$ be two link diagrams of a link $L$.
Then there exist two acyclic cochain complexes $A, A' \in \mathcal{C}_f(\rnmod)$ such that
$C_{U(n)}(D) \oplus A \cong C_{U(n)}(D') \oplus A'$.\qed
\end{lemma}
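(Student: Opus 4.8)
The plan is to combine Krasner's homotopy-invariance statement with a standard fact of homological algebra: a homotopy equivalence between bounded complexes of finitely generated free modules over a ring becomes an honest isomorphism after adding to both sides contractible bounded complexes of finitely generated free modules. Since $D$ and $D'$ present the same link $L$, they are connected by a finite sequence of Reidemeister moves, so Krasner's theorem yields a (graded) homotopy equivalence $f\colon C_{U(n)}(D)\to C_{U(n)}(D')$ in $\mathcal{C}_f(\rnmod)$. Write $P=C_{U(n)}(D)$ and $Q=C_{U(n)}(D')$.

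The tool is the mapping cylinder $\mathrm{Cyl}(f):=\mathrm{Cone}\bigl(P\xrightarrow{(\mathrm{id},\,f)}P\oplus Q\bigr)$, so $\mathrm{Cyl}(f)^i=P^{i+1}\oplus P^i\oplus Q^i$ with differential $(a,b,c)\mapsto(-da,\ a+db,\ f(a)+dc)$; it again lies in $\mathcal{C}_f(\rnmod)$ since $P$ and $Q$ do. I would prove two isomorphisms of complexes. First, the shear map $(a,b,c)\mapsto(a,\,b,\,c-f(b))$ is an isomorphism $\mathrm{Cyl}(f)\cong Q\oplus\mathrm{Cone}(\mathrm{id}_P)$; checking that it commutes with the differentials is a one-line computation using that $f$ is a chain map, and it is visibly invertible. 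Second, there is a short exact sequence of complexes $0\to P\to\mathrm{Cyl}(f)\to\mathrm{Cone}(f)\to 0$, split in each degree because $\mathrm{Cone}(f)$ is a complex of free (hence projective) modules. As $f$ is a homotopy equivalence, $\mathrm{Cone}(f)$ is contractible, so the chain map $\mathrm{Cone}(f)\to P[1]$ classifying this extension is null-homotopic (any chain map out of a contractible complex is), the extension is therefore trivial, and $\mathrm{Cyl}(f)\cong P\oplus\mathrm{Cone}(f)$. Combining the two gives
\[
C_{U(n)}(D)\oplus\mathrm{Cone}(f)\ \cong\ \mathrm{Cyl}(f)\ \cong\ C_{U(n)}(D')\oplus\mathrm{Cone}(\mathrm{id}_{C_{U(n)}(D)}),
\]
which is the assertion, with $A=\mathrm{Cone}(f)$ and $A'=\mathrm{Cone}(\mathrm{id}_{C_{U(n)}(D)})$: both are bounded complexes of shifted free $R_n$-modules, hence objects of $\mathcal{C}_f(\rnmod)$, and both are contractible (the first because $f$ is a homotopy equivalence, the second tautologically), hence acyclic.

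I do not expect a serious obstacle here; the only steps needing care are fixing sign conventions for the cone and cylinder and recording the (standard) fact that a degreewise-split extension of complexes with contractible quotient splits as complexes. An alternative route, in the spirit of the remark preceding the statement, is to inspect Krasner's proof directly: he verifies each Reidemeister move by a string of delooping isomorphisms and Gaussian eliminations, a single Gaussian elimination replaces a complex $X$ by an isomorphic complex $X_{\mathrm{red}}\oplus E$ with $E$ a two-term contractible complex of shifted free modules, and transitivity of the relation ``isomorphic after direct sum with an acyclic object of $\mathcal{C}_f(\rnmod)$'' then yields the claim. The drawback of that approach is having to check that Krasner's reductions genuinely have this elementary form, which is why the mapping-cylinder argument — which uses only that his theorem outputs a homotopy equivalence — seems the more robust one.
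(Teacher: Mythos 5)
Your argument is correct and, as you anticipate at the end of your proposal, it takes a genuinely different route from the paper's. The paper gives no explicit proof: it observes that inspecting Krasner's proof of homotopy invariance (which proceeds by delooping and Gaussian elimination at each Reidemeister move) reveals that the complexes differ at each step only by elementary contractible summands of free modules, and then marks the lemma with \qed. This is exactly your ``alternative route.'' Your main argument instead treats Krasner's theorem as a black box yielding a degree-preserving homotopy equivalence $f \colon C_{U(n)}(D) \to C_{U(n)}(D')$, and then applies the standard mapping-cylinder manoeuvre: the shear isomorphism $\mathrm{Cyl}(f) \cong Q \oplus \mathrm{Cone}(\mathrm{id}_P)$ on one side, and the degreewise-split extension $0 \to P \to \mathrm{Cyl}(f) \to \mathrm{Cone}(f) \to 0$ (which splits as complexes because the quotient is contractible) on the other, giving $P \oplus \mathrm{Cone}(f) \cong Q \oplus \mathrm{Cone}(\mathrm{id}_P)$ with both cones acyclic complexes of shifted free modules. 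Your approach buys robustness and self-containment -- it needs only the existence of a homotopy equivalence in the graded category and standard homological algebra, with no need to audit Krasner's Reidemeister-move computations -- while the paper's approach has the minor advantage of exhibiting the acyclic complexes $A$, $A'$ explicitly as sums of two-term contractible pieces coming from Gaussian eliminations. Both are valid; yours is arguably the cleaner justification of a lemma the paper leaves as an assertion.
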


We call an object of an additive category \emph{indecomposable} if it
is not isomorphic to the sum of two non-zero objects.
Let us now consider how equivariant Khovanov-Rozansky cohomology decomposes.
 
\begin{prop}\label{thm:equi}
Let $L$ be a link with a diagram $D$.
\begin{enumerate}[label=(\roman*)]
\item The equivariant Khovanov-Rozansky complex $C_{U(n)}(D)$ is isomorphic to a direct sum of an acyclic complex in $\mathcal{C}_f(\rnmod)$ and finitely many indecomposable non-acyclic complexes in $\mathcal{C}_f(\rnmod)$.
\item The isomorphism types of the non-acyclic summands do not depend on the choice of $D$, i.e.\ they are link invariants.
\item If $L$ is a knot, then there is precisely one non-acyclic summand with
Euler characteristic $1$.  All other summands have Euler characteristic $0$.
\end{enumerate}
\end{prop}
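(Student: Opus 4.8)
The plan is to first show that the category $\mathcal{C}_f(\rnmod)$ is Krull--Schmidt, which yields (i) and (ii) by soft arguments, and then to pin down the Euler characteristics in (iii) by passing to a generic specialisation and invoking Gornik's structure theorem for knots.

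For the Krull--Schmidt statement the key point is that $R_n\cong\mathbb{C}[x,a_1,\dots,a_{n-1}]$ is a finitely generated connected graded $\mathbb{C}$-algebra with $R_n^0=\mathbb{C}$ and all generators in positive degree, so each graded piece $(R_n)_d$ is finite-dimensional. Hence for shifted free modules $P,Q$ the space $\Hom_{\rnmod}(P,Q)$ of degree-preserving maps is finite-dimensional over $\mathbb{C}$, and consequently $\End(C)$ is a finite-dimensional $\mathbb{C}$-algebra for every $C\in\mathcal{C}_f(\rnmod)$; such algebras are semiperfect and contain no infinite orthogonal family of idempotents. Moreover idempotents split in $\mathcal{C}_f(\rnmod)$: an idempotent chain endomorphism of $C$ splits each $C^i$ as a sum of two finitely generated projective --- hence, by graded Nakayama over a connected graded ring, free --- $R_n$-modules, compatibly with the differential. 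These two facts give that $\mathcal{C}_f(\rnmod)$ is Krull--Schmidt, so every object is a finite direct sum of indecomposables with local endomorphism rings, unique up to reordering and isomorphism. This proves (i): decompose $C_{U(n)}(D)$ into indecomposables --- finitely many, since the complex has finite total rank --- and gather the acyclic ones into a single summand $A\in\mathcal{C}_f(\rnmod)$, using that a direct sum of acyclic complexes is acyclic.

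For (ii) I would apply \Cref{thm:acyclic_summands} to write $C_{U(n)}(D)\oplus A_0\cong C_{U(n)}(D')\oplus A_0'$ with $A_0,A_0'$ acyclic, decompose both sides into indecomposables, and use uniqueness. A bounded complex of free modules is acyclic if and only if it is contractible, so a direct summand of an acyclic complex is acyclic, and acyclicity is preserved under isomorphism; in particular all indecomposable summands of $A_0$ and $A_0'$ are acyclic. Uniqueness of the decomposition then forces the multisets of \emph{non-acyclic} indecomposable summands of the two sides to coincide, and these are precisely the non-acyclic indecomposable summands of $C_{U(n)}(D)$ and of $C_{U(n)}(D')$.

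For (iii), fix $\xi$ so that $\partial w$ has $n$ distinct roots and apply the additive functor $\ev_\xi=-\otimes_{R_n}(\mathbb{C}[x]/\partial w)$, which recovers from $C_{U(n)}(D)$ the Khovanov--Rozansky complex $C_{\partial w}(D)$ of free $\mathbb{C}[x]/\partial w$-modules. The crucial point is that $\ev_\xi$ is applied to a complex of \emph{free} modules and hence preserves ranks on the nose: a shifted free $R_n$-module of rank $r$ goes to a free $\mathbb{C}[x]/\partial w$-module of rank $r$, i.e.\ of $\mathbb{C}$-dimension $nr$. For any direct summand $S$ of $C_{U(n)}(D)$ in $\mathcal{C}_f(\rnmod)$, set $\chi(S)=\sum_i(-1)^i\rk_{R_n} S^i\in\mathbb{Z}$; then $\ev_\xi S$ is a direct summand of $C_{\partial w}(D)$, so $H^i(\ev_\xi S)$ is a subspace of $H^i(C_{\partial w}(D))$. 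For a knot, $H^*(C_{\partial w}(D))$ vanishes in degrees $\ne0$ and has $\mathbb{C}$-dimension $n$ in degree $0$ (see \cite{gornik}, and \cite{lewarklobb} for this formulation), so $H^i(\ev_\xi S)=0$ for $i\ne0$ and
\[
n\cdot\chi(S)\;=\;\sum_i(-1)^i\dim_{\mathbb{C}}\bigl(\ev_\xi S\bigr)^i\;=\;\dim_{\mathbb{C}}H^0(\ev_\xi S)\;\in\;\{0,1,\dots,n\},
\]
and therefore $\chi(S)\in\{0,1\}$. Applying this with $S=C_{U(n)}(D)$ gives $\chi(C_{U(n)}(D))=1$; applying it to the acyclic (indeed contractible) summand $A$ from (i) gives $\chi(A)=0$; and applying it to each indecomposable non-acyclic summand $S_j$ gives $\chi(S_j)\in\{0,1\}$. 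Since $\chi$ is additive over direct sums and $\sum_j\chi(S_j)=\chi(C_{U(n)}(D))=1$, exactly one $S_j$ satisfies $\chi(S_j)=1$ and every other summand, including $A$, has Euler characteristic $0$; this is (iii). The formal parts above are routine; I expect the only delicate point to be this last step, where --- since $\ev_\xi$ is not flat --- the rank and Euler-characteristic bookkeeping must be carried out at the level of complexes of free modules rather than on homology, and combined with the correct form of Gornik's computation of $H^*(C_{\partial w}(D))$ for a knot.
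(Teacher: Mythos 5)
Your proof is correct, and parts (i)--(ii) follow essentially the same Krull--Schmidt route as the paper (the paper obtains Krull--Schmidt for the abelian category $\mathcal{C}(\rnmod)$ via Atiyah's theorem and then shows the summands land in $\mathcal{C}_f(\rnmod)$ because graded projectives over a graded polynomial ring are free, whereas you argue idempotent splitting directly inside $\mathcal{C}_f(\rnmod)$ --- a cosmetic difference). Part (iii) is where you take a genuinely different path: the paper applies the \emph{partial} evaluation $\ev'_{(0,\ldots,0)}: R_n \to \mathbb{C}[x]$ and invokes the structure theorem for bounded complexes of free modules over the PID $\mathbb{C}[x]$ (indecomposables are rank-$1$ complexes or shifts of $\mathbb{C}[x]\xrightarrow{x^k}\mathbb{C}[x]$), together with the fact that $H(e(C_{U(n)}(D)))$ has $\mathbb{C}[x]$-rank $1$ for a knot. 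You instead apply a \emph{full} evaluation $\ev_\xi$ at a separable potential, use Gornik's computation $\dim_{\mathbb{C}} H^*(C_{\partial w}(D)) = n$ concentrated in degree $0$, and run an Euler-characteristic count: $n\,\chi(S) = \dim H^0(\ev_\xi S) \in \{0,\ldots,n\}$, so each summand has $\chi\in\{0,1\}$ and additivity forces exactly one non-acyclic summand to have $\chi = 1$. Your route is more elementary in that it avoids the PID structure theorem and the rank-$1$ statement over $\mathbb{C}[x]$ (it needs only the $\mathbb{C}$-dimension of cohomology, not its module structure), at the cost of not seeing what the other indecomposable summands actually look like --- information the paper's approach provides for free and which it exploits elsewhere (e.g.\ in the discussion of $H_{U(n)}$). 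Both arguments are sound.
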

\begin{proof}
(i):
The category $\rnmod$ is abelian, and so $\mathcal{C}(\rnmod)$ is, too. Moreover $\mathcal{C}(\rnmod)$ is $\mathbb{C}$-linear (i.e.\ its Hom-spaces are $\CC$-vector spaces),
and has finite-dimensional Hom-spaces. By \cite{atiyah},
$\mathcal{C}(\rnmod)$ is thus Krull-Schmidt, meaning that its objects can be written in an essentially unique way as the sum
of finitely many indecomposable objects. So $C_{U(n)}(D)$ decomposes as a sum of complexes whose isomorphism types are uniquely determined.

Since the chain modules of $C_{U(n)}(D)$ are free, the chain modules of those summands are summands of free modules, which means they are projective modules.
But it is a well known theorem (not to be confused with the harder Quillen-Suslin theorem)
that graded projective modules over a graded polynomial ring are in fact graded free (see e.g.~\cite[section 6.13]{jacobson}).

(ii): This follows immediately from \cref{thm:acyclic_summands}.

(iii): Denote by $e$ the partial evaluation $\ev'_{(0,\ldots,0)}$ (in fact, evaluating at any other $\xi\in\mathbb{C}^{n-1}$ would work just as well).
Applying $e$ to $C_{U(n)}(D)$ yields a complex $e(C_{U(n)}(D))$ of free filtered $\mathbb{C}[x]$-modules.
Since $\mathbb{C}[x]$ is a PID, the indecomposable summands of $e(C_{U(n)}(D))$ are all isomorphic to either a rank-1 complex, or
a shift of $\mathbb{C}[x] \xrightarrow{x^k} \mathbb{C}[x]$ for $k\geq 0$. This is discussed in detail in \cite{khovanov-frobenius} for $n = 2$
and in \cite{krasnerEquivariant} for greater $n$. Since $H(e(C_{U(n)}(D)))$ has rank 1, there is exactly one summand of the first type.
All summands of the second type have Euler-characteristic 0.
Now let $X$ be an indecomposable summand of $C_{U(n)}(D)$. Then the cochain modules of $X$ have the same ranks as those of $e(X)$,
and $e(X)$ is a sum of some of the indecomposable summands of $e(C_{U(n)}(D))$.  This implies the statement.
\end{proof}
\begin{definition}
Let $K$ be a knot. We call the single summand of Euler characteristic~$1$ in the decomposition of \cref{thm:equi}
the \emph{equivariant Rasmussen invariant} and denote it by $S_n(K)$.
\end{definition}
As a consequence of \cref{thm:equi}, the equivariant Rasmussen invariant is a knot invariant, well-defined up to isomorphism.
All previously defined concordance invariants from $\mathfrak{sl}_n$ cohomology may be computed from $S_n(K)$, as the following
proposition shows.
\begin{prop}
	We have the following:
\begin{enumerate}[label=(\roman*)]
\item
There is a cocycle $\psi' \in x^{n-1} \ev_{(0,\ldots,0,-1)} S_n(K)$ that is not a coboundary.
Let $\psi$ be the Gornik cocycle for the potential $\potential = x^n - x^{n-1}$ (see \cref{defn:psi}).
Then $[\psi] \in H_{x^n-x^{n-1}}(D)$ and $[\psi'] \in H(\ev_{(0,\ldots,0,-1)} S_n(K))$ have
the same $\mathcal{G}_t$ grading for all $t$. This allows $\upsln_n$ to be computed from $S_n$.
\item Let $\potential = x^n + \xi_{n-1}x^{n-1} + \ldots + \xi_0$ be a polynomial with complex coefficients.
Suppose $\potential$ is a product of distinct linear factors and fix a root $\alpha$ of $\potential$.
Then $H_{\potential}(K)$ is isomorphic to $H(\ev_{(\xi_0, \ldots, \xi_{n-1})}(S_n(K)))$,
and $\widetilde{H}_{\potential, \alpha}(K)$ is isomorphic to the cohomology of
\[
\frac{\potential}{x - \alpha} \ev_{(\xi_0, \ldots, \xi_{n-1})}(S_n(K))[1-n].
\]
Both isomorphisms preserve the cohomological and the quantum grading.
\end{enumerate}
\end{prop}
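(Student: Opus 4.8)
The plan is to push all four assertions through the direct-sum decomposition $C_{U(n)}(D) \cong S_n(K) \oplus B$ supplied by \cref{thm:equi}, where $B$ collects the acyclic summand together with the non-acyclic indecomposables of Euler characteristic $0$. Three features of this decomposition do the work. (a) Evaluation $\ev_\xi$ commutes with direct sums, so $\CKR(D) = \ev_\xi(C_{U(n)}(D)) = \ev_\xi(S_n(K)) \oplus \ev_\xi(B)$, where $\partial w$ is the potential that $\ev_\xi$ specialises to. (b) The summands of $C_{U(n)}(D)$ are summands as complexes of \emph{graded} $R_n$-modules, so the induced decomposition of $\CKR(D)$ is simultaneously compatible with the quantum filtration $\F$ (the specialisation of the internal grading) and with multiplication by $x$ (which is $R_n$-linear), hence with every filtration $\G_t$ of \cref{defn:slopingfiltration}. (c) For $\beta \in \C$ a root of $\partial w$, base-changing a bounded complex $X$ of free $R_n$-modules along evaluation $R_n \to \C$ at $(\beta,\xi)$ turns $R_n^k$ into $\C^k$, so the $\C$-Euler characteristic of the result equals the alternating sum of $R_n$-ranks $\chi(X)$; with \cref{thm:equi}(iii) this gives $\chi(C_{U(n)}(D)) = \chi(S_n(K)) + \chi(B) = 1 + 0$.

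The technical core is a \emph{simple-root count}. Suppose $\beta$ is a root of $\partial w$ of multiplicity one, and let $e_\beta \in R := \C[x]/\partial w$ be the idempotent cutting out the $\beta$-summand; evaluation at $(\beta,\xi)$ factors as $\ev_\xi$ followed by the exact projection $e_\beta(-)$, which for a simple root lands in $e_\beta R \cong \C$. Taking cohomology and using exactness of $e_\beta$,
\[ H\big(e_\beta\,\ev_\xi(S_n(K))\big) \oplus H\big(e_\beta\,\ev_\xi(B)\big) \;=\; e_\beta\,\HKR(D) \,, \]
whose right-hand side is the $\beta$-eigenspace of the $x$-action on $\HKR(D)$: it is $1$-dimensional and concentrated in cohomological degree $0$ — for separable $\partial w$ this is the total-dimension-$n$ statement recalled in \cref{sec:introduction}, and in general it is the identification with $\sl_1$ cohomology at $\beta$ quoted just after \cref{lem:psi_is_a_cocycle}. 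By (c), $H(e_\beta\,\ev_\xi(S_n(K)))$ has Euler characteristic $1$, hence is non-zero; being concentrated in degree $0$ and summing with $H(e_\beta\,\ev_\xi(B))$ to something $1$-dimensional, it is exactly $\C$ while $H(e_\beta\,\ev_\xi(B)) = 0$.

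Part (ii) now drops out. For separable $\partial w$ the idempotents $e_{\beta_1},\ldots,e_{\beta_n}$ at its (simple) roots split $R$ as $\C^n$; the count gives $H(e_{\beta_j}\,\ev_\xi(B)) = 0$ for each $j$, so $\ev_\xi(B)$ is acyclic and the inclusion $\ev_\xi(S_n(K)) \hookrightarrow \CKR(D)$ is a quasi-isomorphism, respecting cohomological and quantum degree by (b); this is the isomorphism $\HKR(K) \cong H(\ev_\xi(S_n(K)))$. For the reduced statement, $\tfrac{\partial w}{x-\alpha} = \partial w'(\alpha)\,e_\alpha$ with $\partial w'(\alpha) \neq 0$, so $\tfrac{\partial w}{x-\alpha}\ev_\xi(S_n(K)) = e_\alpha\ev_\xi(S_n(K))$ and similarly with $\CKR(D)$ in place of $\ev_\xi(S_n(K))$; as $e_\alpha\ev_\xi(B)$ is acyclic, $e_\alpha\ev_\xi(S_n(K)) \hookrightarrow \tfrac{\partial w}{x-\alpha}\CKR(D)$ is a quasi-isomorphism, and unwinding the definition of $\redH_{\partial w,\alpha}(K)$ as the cohomology of that target, normalised by the quantum shift $[1-n]$, gives the second isomorphism. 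Part (i) is the same argument at the single simple root $\alpha = 1$ of $\partial w = x^n - x^{n-1} = x^{n-1}(x-1)$: here $R \cong \C[x]/x^{n-1} \times \C$ and $x^{n-1}$ is idempotent and equals the idempotent $e_1$ for the $x=1$ factor, so $x^{n-1}\ev_{(0,\ldots,0,-1)}(S_n(K)) = e_1\ev_{(0,\ldots,0,-1)}(S_n(K))$; the count shows its cohomology is $1$-dimensional and maps isomorphically onto $\langle[\psi]\rangle$, whence a cocycle $\psi'$ in it generating the cohomology is not a coboundary (as $[\psi]\neq 0$ by \cref{prop:psi_is_not_a_coboundary}). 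Moreover, applying $e_1 = x^{n-1}$ to $[\psi]$ (which fixes it, since $x\psi=\psi$) together with $H(e_1\ev_{(0,\ldots,0,-1)}(B)) = 0$ shows that $[\psi]$ lies entirely in the summand $H(\ev_{(0,\ldots,0,-1)}(S_n(K)))$; since the decomposition is $\G_t$-filtered for all $t$ by (b), the set of $\ell$ with $[\psi]\in\im\big(H(\G_t^\ell\CKR(D))\to\HKR^0(D)\big)$ coincides with the set of $\ell$ with $[\psi']\in\im\big(H(\G_t^\ell\ev_{(0,\ldots,0,-1)}(S_n(K)))\to H(\ev_{(0,\ldots,0,-1)}(S_n(K)))\big)$, so $[\psi]$ and $[\psi']$ have equal $\G_t$-degree, and $\gamma(D)$ — hence $\upsln_n$, via \cref{defn:almostgimel} and \cref{defn:gimel} — is computed inside $S_n(K)$.

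The main obstacle is not a single deep step but keeping the compatibilities airtight: one must confirm that $\CKR(D)$ genuinely \emph{is} $\ev_\xi(C_{U(n)}(D))$ as a complex carrying both the quantum filtration and the $x$-action, so that Krasner's graded, $R_n$-linear decomposition descends to both filtrations at once, and that the quantum normalisation built into the definition of $\redH_{\partial w,\alpha}$ is precisely the $[1-n]$ of the statement. A secondary point worth stating cleanly is the identification, at a simple root $\beta$, of the cohomology of the complex base-changed to the residue field $\C$ at $(\beta,\xi)$ with the $\beta$-eigenspace of $x$ on $\HKR(D)$, which rests only on exactness of the idempotent projection $e_\beta$ and on $\beta$ being simple; the remaining verification — that passing to cohomology of a direct sum of filtered subcomplexes commutes with the direct sum — is routine.
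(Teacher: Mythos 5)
Your proof is correct and follows the same route as the paper's: push the Krull--Schmidt decomposition $C_{U(n)}(D) \cong S_n(K) \oplus B$ through the evaluations, and observe that it respects the quantum grading and the $x$-action (hence every $\G_t$-filtration) simultaneously. The paper's own argument is considerably terser, simply asserting that the relevant dimensions agree; your Euler-characteristic count at each simple root, giving $H(e_\beta\,\ev_\xi(B)) = 0$ and hence the acyclicity of $\ev_\xi(B)$, is the justification the paper leaves implicit.
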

\begin{proof}
(i): This follows immediately from the fact that the decomposition of $C_{U(n)}(D)$ into indecomposable summands respects both the quantum and
the $x$-filtration, and thus the $\mathcal{G}_t$-filtrations as well.

(ii): Taking cohomology of the decomposition of $C_{U(n)}(D)$ into indecomposable summands, one finds $H(\ev_{(\xi_0, \ldots, \xi_{n-1})}(S_n(K)))$
isomorphic to a subspace of $H_{x^n - x^{n-1}}(K)$. Because their dimensions agree, that subspace is actually the whole space.
The analogous argument may be applied to the reduced case.
\end{proof}
\begin{theorem} Let $K$ and $K'$ be knots.
\begin{enumerate}[label=(\roman*)]
\item A smooth connected cobordism $\Sigma$ of genus $g$ from $K'$ to $K$ with a fixed movie presentation induces a non-zero map $\Sigma_*: S_n(K) \to S_n(K')$ of degree $n g$.
\item If $g = 0$, then $\Sigma_*$ is a grading-preserving isomorphism.
\item $S_n(K \# K')$ is an indecomposable summand of $S_n(K) \otimes S_n(K')$.
\end{enumerate}
\end{theorem}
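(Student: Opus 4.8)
The plan is to work throughout in the homotopy category of $\mathcal{C}_f(\rnmod)$. A finite acyclic complex of free (hence projective) $R_n$-modules is contractible, so the complexes $A,A'$ of \cref{thm:acyclic_summands} are negligible: each elementary cobordism (birth, saddle, death, Reidemeister move) induces a morphism of equivariant complexes well defined up to homotopy, these compose, and the resulting $\Sigma_*\colon C_{U(n)}(D)\to C_{U(n)}(D')$ attached to a movie for $\Sigma$ depends, up to homotopy and a non-zero scalar, only on $\Sigma$ --- functoriality of the $\sl_n$ theory, which holds over $R_n$ (see \cite{krasnerEquivariant}, cf.\ \cite{wu3}). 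Fix Krull--Schmidt decompositions $C_{U(n)}(D)\cong S_n(K)\oplus R_D$ and $C_{U(n)}(D')\cong S_n(K')\oplus R_{D'}$ as in \cref{thm:equi}; the map of the theorem is the $S_n(K)\to S_n(K')$ matrix entry of $\Sigma_*$, and its degree $ng$ comes from an additive count of the degrees of the elementary maps (\cite{wu3}). For the non-vanishing in (i), present $\Sigma$ by a movie of the special form of \cref{lem:nice_movie_presentation} and evaluate at some $\xi\in\C^n$ with separable potential $\partial w$: since $\ev_\xi C_{U(n)}(D)=C_{\partial w}(D)$ has cohomology $H_{\partial w}(K)$ and the preceding proposition identifies $H(\ev_\xi S_n(K))$ with all of $H_{\partial w}(K)$, we get $H(\ev_\xi R_D)=0$; hence the Gornik class $[\psi(K)]$ lies in $H(\ev_\xi S_n(K))$, and \cref{prop:concordance_invariance} sends it to a non-zero multiple of $[\psi(K')]\in H(\ev_\xi S_n(K'))$. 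So the $S_n$-entry of $\ev_\xi\Sigma_*$, and therefore that of $\Sigma_*$ itself, is non-zero.

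For (ii), $g=0$ makes the degree $0$, so $\Sigma_*\colon S_n(K)\to S_n(K')$ is degree preserving and it remains to see it is invertible. Turning the concordance upside down gives $\overline{\Sigma}\colon K'\to K$ and a degree-$0$ map $\overline{\Sigma}_*\colon S_n(K')\to S_n(K)$, non-zero by (i). The composite $\overline{\Sigma}_*\Sigma_*$ is the map of the connected genus-$0$ cobordism $\overline{\Sigma}\circ\Sigma$ from $K$ to $K$, and its $k$-th power is the map of $(\overline{\Sigma}\circ\Sigma)^{\circ k}$, again a connected genus-$0$ cobordism, so non-zero by (i); thus $\overline{\Sigma}_*\Sigma_*$ is a non-nilpotent degree-$0$ endomorphism of $S_n(K)$. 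Since $S_n(K)$ is indecomposable in $\mathcal{C}_f(\rnmod)$, its endomorphism ring there is a finite-dimensional local $\C$-algebra (\cite{atiyah}), and the degree-$0$ endomorphism ring in the homotopy category, a quotient of it, is again a finite-dimensional local $\C$-algebra --- in which a non-nilpotent element is a unit. Hence $\overline{\Sigma}_*\Sigma_*$ is an automorphism of $S_n(K)$ and, symmetrically, $\Sigma_*\overline{\Sigma}_*$ is one of $S_n(K')$; composing $\Sigma_*$ with the appropriate inverses exhibits it simultaneously as a split monomorphism and a split epimorphism, hence as an isomorphism.

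For (iii), the standard connect-sum formula gives, up to an overall degree shift, a chain isomorphism $C_{U(n)}(D\# D')\cong C_{U(n)}(D)\otimes_{R_n}C_{U(n)}(D')$, the tensor being over the two basepoint $R_n$-module structures. Expanding both factors by Krull--Schmidt writes the left-hand side as a direct sum of $S_n(K)\otimes_{R_n}S_n(K')$ and three complexes each having $R_D$ or $R_{D'}$ as a tensor factor. Evaluating at a separable $\xi$ makes those three contractible, since $\ev_\xi R_D$ and $\ev_\xi R_{D'}$ are acyclic complexes of free modules over the semisimple ring $\C[x]/\partial w$, hence contractible, and $\ev_\xi$ commutes with $\otimes$ of free modules. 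By uniqueness of the Krull--Schmidt decomposition the indecomposable complex $S_n(K\# K')$ is a summand of one of the four; but $\ev_\xi S_n(K\# K')$ has cohomology $H_{\partial w}(K\# K')$, which is $n$-dimensional by the preceding proposition and \cite{gornik}, so it is not contractible, and therefore $S_n(K\# K')$ cannot be a summand of any of the three that collapse after $\ev_\xi$. It is thus a summand of $S_n(K)\otimes_{R_n}S_n(K')$.

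Finally, (iv) rests on the standard behaviour of $\sl_n$ cohomology under mirroring, $C_{U(n)}(-D)\cong C_{U(n)}(D)^{\vee}$, where $(-)^{\vee}=\Hom_{R_n}(-,R_n)$ with cohomological degrees and quantum shifts negated and the usual sign on the differential. Duality being a contravariant self-equivalence of $\mathcal{C}_f(\rnmod)$, it carries $S_n(K)\oplus R_D$ to a Krull--Schmidt decomposition $S_n(K)^{\vee}\oplus R_D^{\vee}$ of $C_{U(n)}(-D)$ with $S_n(K)^{\vee}$ indecomposable, and negating the cohomological grading leaves the alternating sum of ranks unchanged, so $S_n(K)^{\vee}$ still has Euler characteristic $1$; by the uniqueness in \cref{thm:equi} it must be $S_n(-K)$. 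The part likely to demand the most care is the foundational layer underlying all four statements --- functoriality of the equivariant complex up to homotopy and scalar, the negligibility of acyclic summands, and the persistence of indecomposability and of the local endomorphism ring of $S_n(K)$ under passage to the homotopy category and under $\otimes_{R_n}$, $\ev_\xi$, and $(-)^{\vee}$ --- together with the bookkeeping in (iii) that keeps the Euler-characteristic-$1$ summand from being concealed inside an unwanted tensor factor; passing to a separable potential, where three of the four factors become contractible, is what makes that step transparent.
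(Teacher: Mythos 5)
Your proof is correct and, for parts (i), (ii), and (iv), follows essentially the same route as the paper: compose the elementary cobordism maps over $R_n$, see non-vanishing by evaluating at a potential where the Gornik class is controlled, run the Fitting-lemma (``nilpotent or iso'') argument on the indecomposable $S_n(K)$ for (ii), and use that $C_{U(n)}(-K)$ is dual to $C_{U(n)}(K)$ for (iv). You are somewhat more careful than the paper in two respects: you spell out passing to the homotopy category so that the acyclic summands $A, A'$ of \cref{thm:acyclic_summands} are genuinely negligible, and you spell out why the Gornik class lands in the $S_n$ component (namely $H(\ev_\xi R_D)=0$ for separable $\xi$, via the preceding proposition), whereas the paper leaves both implicit. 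For (iii), however, you take a genuinely different route. The paper's argument is purely Euler-theoretic: $S_n(K)\otimes S_n(K')$ has Euler characteristic $1$, so one of its indecomposable summands has odd Euler characteristic, and by \cref{thm:equi}(iii) the only such indecomposable summand of $C_{U(n)}(K\#K')$ is $S_n(K\#K')$. You instead split $C_{U(n)}(K\#K')$ into $S_n(K)\otimes S_n(K')$ plus three factors involving $R_D$ or $R_{D'}$, evaluate at a separable $\xi$ to see the latter three become contractible, and rule out $S_n(K\#K')$ being a summand of any of them because $\ev_\xi S_n(K\#K')$ has $n$-dimensional cohomology. Both arguments are correct; the paper's is slightly shorter and self-contained (no evaluation needed), while yours leverages the preceding proposition and works just as well. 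Note also that you do not actually need to evaluate to kill the $R_D$-involving factors in (iii): since the acyclic part of $R_D$ is already contractible over $R_n$, only the non-acyclic Euler-characteristic-zero summands survive, and those are handled by your dimension count after evaluation in any case.
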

Note that $S_n$ gives a lower bound to the slice genus by (i), and is a smooth concordance invariant by (ii).
In particular, if $K$ is slice, then $S_n(K)$ is free of rank 1 without grading shifts.

\begin{proof}
Let $D$ and $D'$ be the diagrams of $K$ and $K'$, respectively occurring at the ends of the movie presentation of $\Sigma$.

(i): The movie presentation of $\Sigma$ is
a sequence of Reidemeister moves and handle attachments. To define $\Sigma_*$, one composes the maps associated to each of these moves.
To Reidemeister moves, associate the isomorphisms given in \cite{krasnerEquivariant}.
To 0-, 1-, and 2-handle attachments, associate the maps given by unit, saddle and trace (one may use tangles).
This is exactly the same way that $\Sigma$ induces a map of $C_{\potential}$, and so evaluation of $\Sigma_*$
gives the corresponding map $C_{\potential}(D) \to C_{\potential}(D')$.
Since that map is non-zero, so must be $\Sigma_*$.

(ii):
Denote by $\overline{\Sigma}$ the cobordism obtained by turning $\Sigma$ upside down, so that $\overline{\Sigma}_*: S_n(K') \to S_n(K)$.
Now $\overline{\Sigma}_* \circ \Sigma_*$ is a map $S_n(K) \to S_n(K)$, which is equal to $(\overline{\Sigma}\circ \Sigma)_*$.
For $k\geq 0$, the $k$-th power of this map is induced by the $k$-fold composition of $\overline{\Sigma}\circ \Sigma$ with itself,
so by (i), the $k$-th power is non-zero.
Since every endomorphism of an indecomposable module is either nilpotent or an isomorphism (\cite[Lemma~6]{atiyah}),
$\overline{\Sigma}_* \circ \Sigma_*$ is an isomorphism in $\mathcal{C}_f(\rnmod)$. The map $\Sigma_*\circ \overline{\Sigma}_*$ is an isomorphism by the same argument,
which implies that $\Sigma_*$ and $\overline{\Sigma}_*$ are isomorphisms.

(iii): Since $C_{U(n)}(D \# D') \cong C_{U(n)}(D) \otimes C_{U(n)}(D')$, one finds
$S_n(K) \otimes S_n(K')$ as a direct summand of $S_n(K\#K')$. It is not necessarily indecomposable,
but has Euler characteristic $1$, and so one of its indecomposable summands must have odd Euler characteristic.
By \cref{thm:equi}(iii), that summand must be $S_n(K\# K')$.

\end{proof}

\begin{remark}
\newcommand{\hun}{\check{C}_{U(n)}}
It is also of interest to consider the sum of all non-acyclic summands of $C_{U(n)}(D)$.
By the same argument as above, this is a link invariant (up to grading preserving isomorphism);
let us denote it by $\hun(D)$.
The graded dimension of this cochain complex equals the graded dimension of reduced $\SL_n$ cohomology.
So it follows from Rasmussen's work \cite{ras4} that the graded dimension of $\hun$ stabilizes
for large $n$, in a certain sense: the exponents of $q$ are of the form $a + bn$ with $a,b\in\Z$.
Experimentally, one observes a similar stabilization of the differentials of~$\hun$.
Indeed, this appears to be the case for all examples considered in this paper.
This observation supports the idea that there should exist an \emph{equivariant HOMFLYPT cohomology}
unifying all of the~$\hun$.
\end{remark}

\section{A second example}
\label{sec:outlook}
\begin{figure}[ht]
\centering
\includegraphics[scale=0.833]{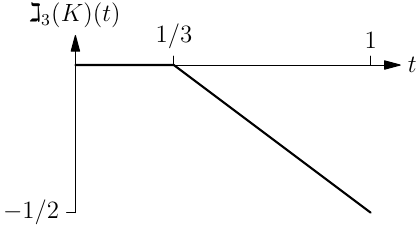}
\caption{$\upsln_3(K)$ for $K = P(7,-5,4) \# P(-9,7,-6)$.}
\label{fig:2xg}
\end{figure}
Let us calculate $\upsln_3$ of one more example knot $K = P(7,-5,4) \# P(-9,7,-6)$ (see \cref{fig:pretzel_example}).  We start by computing the equivariant Rasmussen invariant of $P(7,-5,4)$ and $P(-9,7,-6)$ using \khoca{} \cite{khoca}.
\[
\begin{array}{ll}
S_3(P(7,-5,4)): & 
S_3(P(-9,7,-6)): \\[2ex]
\xymatrix@R=1.5ex@C=4em{
                          & t^0 q^{0}R_n \ar@{}[dd]|{\oplus} & \\
t^{-1}q^{4}R_n \ar[ru]^{a_2^2 - 3a_1} \ar[rd]_{\potential'} \\
 & t^0q^{0}R_n
}
&
\xymatrix@R=1.5ex@C=4em{
t^0 q^0 R_n \ar@{}[dd]|{\oplus} \ar[rd]^{\ (3x + a_2)^3}\\
 & t^1 q^{-6} R_n  \\
t^0 q^{-2} R_n \ar[ru]_{\potential'}
}
\end{array}
\]
Taking the tensor product of those two complexes and plugging in $\potential = x^3 - x^2$ yields
the following complex over $\rho = \mathbb{C}[x]/(x^3 - x^2)$:
\[
\begin{array}{c}
t^{-1} q^4 \rho \\
\oplus \\
 t^{-1} q^2 \rho
\end{array}
\xrightarrow{\footnotesize\begin{pmatrix}
1          & 0          \\
3x^2 - 2x  & 0          \\
9x - 1     & 3x^2 - 2x  \\
0          & 1          \\
0          & 3x^2 - 2x  \\
\end{pmatrix}}
\begin{array}{c}
(t^{0} q^0\rho)^{\oplus 2} \\
\oplus\\
(t^{0} q^{-2}\rho)^{\oplus 3}
\end{array}
\xrightarrow{\footnotesize\begin{pmatrix}
9x - 1    & 0           \\
0         & 9x - 1      \\
-1        & -3x^2 + 2x  \\
3x^2 - 2x & 0           \\ 
0         & 3x^2 - 2x   \\
\end{pmatrix}^{\top}}
\begin{array}{c}
t^{1} q^{-6} \rho \\
\oplus \\
 t^{1} q^{-6} \rho
\end{array}
\]
One may take $\psi = (x^2, 0, 0, -8x^2, 0)^{\top}$, since this vector is in the kernel of $\differential^0$,
not in the image of $\differential^{-1}$, and in the highest $x$-filtration level.
For $t > 1/3$ and $\ell = t - 1$, one has
\[
\mathcal{G}^{\ell}_t C^0 = \langle 1, x\rangle^{\oplus 2} \oplus (q^{-2} \rho)^{\oplus 3}.
\]
That space contains a cochain cohomologous to $\psi$, namely
\[
\psi' = \differential^{-1}((-x^2 + x,0)^{\top}) + \psi = (x, 0, x^2 - x, -8x^2, 0)^{\top}.
\]
On the other hand, one easily checks that $\psi$ is not cohomologous to any cochain in 
$\langle 1\rangle^{\oplus 2} \oplus (q^{-2} \rho)^{\oplus 3}$. This implies that $\psi$ is
`best' for $t \leq 1/3$, and $\psi'$ is `best' for $t \geq 3$.
Hence $\gamma(K)(t) = 4t - 2$ for $t \leq 1/3$, and $\gamma(K)(t) = t - 1$ for $t \geq 1/3$,
and $\upsln_3(K)(t) = 0$ for $t \leq 1/3$ and $\upsln_3(K)(t) = -3t/4 + 1/4$.
A plot is shown in \cref{fig:2xg}.

This is an example of a knot for which all generalized Rasmussen invariants $s_2, s_3, \ldots$
vanish \cite{lew2}, but $\upsln_3$ obstructs its sliceness. Moreover, it is a quasi-alternating
knot \cite{greene2}, which implies that the concordance invariants coming from knot Floer homology
such as $\tau$ and $\Upsilon$ contain the same information as the knot signature $\sigma(K) = 0$.
This shows independence of $\upsln_3$ from these sliceness obstructions, and we deduce \Cref{prop:existence_of_knot_beating_upsilon_etcetera}.

With a little care, one can use $\upsln_3$ to show \Cref{prop:inf_order_in_QA_mod_A}.

\begin{proof}[Proof of \cref{prop:inf_order_in_QA_mod_A}]
	We begin the proof by assigning an exercise for the eager reader (she might start by taking the dual of $S_3(P(7,-5,4)) \otimes S_3(P(-9,7,-6))$ above).  The exercise is the computation of $\upsln_3(-K)$, where $-K$ is the concordance group inverse of $K$.  This will be found to be trivial.

The knot $K$ is of infinite order modulo $\langle A \rangle$ iff $K^{\# m}$ is not concordant to an alternating knot for any $m\geq 1$.
To prove this, it suffices to establish that $\upsln_3(K^{\# m})$ is non-linear for all $m \geq 1$.
Note that $\upsln_3(K)'(0) = 0$
implies
\begin{equation}\tag{$\dagger$}\label{eq1}
\upsln_3(K^{\# m})'(0) = 0
\end{equation}
for all $m \in \mathbb{Z}$, since $\upsln_3$ is a homomorphism near $0$.

Note moreover that $K$ is concordant to the following connected sum of $m$ knots:
\[
L = (K^{\# m}) \# \underbrace{-\!K \# \ldots \# -\!\!K}_{m-1}.
\]
So $\upsln_3(L)(1) = \upsln_3(K)(1) < 0$. By superadditivity at $1$ (\cref{thm:superadditiveat1}),
we have $\upsln_3(L)(1) \geq \upsln_3(K^{\# m})(1) + (m-1)\upsln_3(-K)(1) = \upsln_3(K^{\# m})(1)$,
and thus
\begin{equation}\tag{$\ddagger$}\label{eq2}
\upsln_3(K^{\# m})(1) < 0.
\end{equation}
Now \Cref{eq1} and \Cref{eq2} combined imply that $\upsln_3(K^{\# m})$ is non-linear.
\end{proof}

On the other hand, we note that $K$ has non-zero Levine-Tristram signature.  The Levine-Tristram signature of a knot $L$ is a function $\sigma_{\rm LT}(L) : [0,1] \rightarrow \Z$,
and is zero almost everywhere on algebraically slice knots (in other words those knots with Seifert matrices equal to that of a slice knot).

\begin{proposition}
	\label{prop:levine-tristram}
	There exists an algebraically slice knot $K'$ with $\upsln_3(K') \neq 0$
	but $s_n(K') = \tau(K') = 0$ and $\sigma_{\rm LT}(K') = 0$.
\end{proposition}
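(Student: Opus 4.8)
The plan is to obtain $K'$ by connect-summing the quasi-alternating knot $K = P(7,-5,4)\#P(-9,7,-6)$ of this section with a carefully chosen ``correction knot'' that cancels its algebraic concordance class while leaving all the other invariants untouched. Recall that we have already seen $\sigma(K)=0$, $s_n(K)=\tau(K)=0$ for all $n$, and $\upsln_3(K)(1)=-1/2<0$; the only feature preventing $K$ itself from witnessing the proposition is that it is not algebraically slice.

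First I would invoke the classical fact (going back to Levine's computation of the algebraic concordance group) that two-bridge knots generate that group; so there is a connected sum $J=J_1\#\cdots\#J_m$ of two-bridge knots whose algebraic concordance class is the inverse of that of $K$. Put $K':=K\#J$. Since the ordinary signature factors through the algebraic concordance group and $\sigma(K)=0$, we get $\sigma(J)=0$. By construction $K'$ is algebraically slice, and since $\sigma_{\rm LT}$ is an algebraic concordance invariant we obtain $\sigma_{\rm LT}(K')=0$. The invariants $\tau$ and each $s_n$ are additive, and on the alternating knots $J_i$ they are fixed scalar multiples of $\sigma(J_i)$; as $\sum_i\sigma(J_i)=\sigma(J)=0$, this gives $\tau(J)=s_n(J)=0$ and hence $\tau(K')=s_n(K')=0$ for all $n$.

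It remains to verify $\upsln_3(K')\not\equiv 0$, and this is the one place where the fact that $\upsln_n$ is only a quasi-homomorphism must be handled. Since $J$ is a connected sum of alternating knots it is homogeneous, so by \Cref{prop:boring_for_sqzd_knots} the function $\upsln_3(J)$ is linear; it passes through the origin, and its right-hand derivative at $0$ is the slice-torus homomorphism of \Cref{thm:derivativeatzero} evaluated on $J$, which by additivity and the value of slice-torus invariants on alternating knots equals $-\sigma(J)/2=0$. Hence $\upsln_3(J)\equiv 0$, and the same argument applied to the concordance inverse $-J$ (again a connected sum of alternating knots, with vanishing signature) gives $\upsln_3(-J)\equiv 0$. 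Since $K'\#(-J)$ is concordant to $K$, concordance invariance (\Cref{thm:upsln_concordance_invariant}) together with superadditivity at $t=1$ (\Cref{thm:superadditiveat1}) yields
\[
\upsln_3(K)(1) = \upsln_3\bigl(K'\#(-J)\bigr)(1) \geq \upsln_3(K')(1) + \upsln_3(-J)(1) = \upsln_3(K')(1).
\]
Therefore $\upsln_3(K')(1)\leq\upsln_3(K)(1)=-1/2<0$, so $\upsln_3(K')\not\equiv 0$ and $K'$ has all the asserted properties.

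I expect the only genuinely non-formal ingredient to be the realization statement producing $J$: one needs the algebraic concordance class of $K$ to be killed by a \emph{homogeneous} knot, so that the connect-summand contributes nothing to $\upsln_3$, $\tau$, or $s_n$. Once that is arranged the remainder is the bookkeeping above, the only subtlety being that for $\upsln_3$ one must use the one-sided superadditivity inequality at $t=1$ in place of additivity.
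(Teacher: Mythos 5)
Your proof takes a genuinely different route from the paper's. The paper applies Rudolph's realization theorem for strongly quasipositive knots: it chooses strongly quasipositive $P$ with a Seifert matrix S-equivalent to that of $K$ and strongly quasipositive $Q$ with Alexander polynomial $1$, both of genus $g(K)$, and sets $K' = K \# (-P) \# Q$. Because $P$, $Q$ are quasipositive, the equivariant Rasmussen invariants $S_3(P)$ and $S_3(Q)$ are rank-one free modules with equal degree shifts, so the connect-summands $-P$ and $Q$ contribute opposite linear terms to $\upsln_3$ that cancel exactly, giving $\upsln_3(K') = \upsln_3(K)$ on the nose. You instead neutralize the algebraic concordance class of $K$ with an alternating connected sum $J$ and settle for a one-sided inequality $\upsln_3(K')(1) \leq \upsln_3(K)(1) = -1/2$ via superadditivity; that is enough for the proposition, and your bookkeeping for $\tau$, $s_n$, $\sigma_{\mathrm{LT}}$, and for $\upsln_3(J) \equiv 0$ and $\upsln_3(-J) \equiv 0$ is all sound.

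The one place where I'd press you is the realization step. You assert as a ``classical fact going back to Levine'' that two-bridge knots generate the algebraic concordance group. Levine computed the isomorphism type of that group ($\Z^\infty \oplus (\Z/2)^\infty \oplus (\Z/4)^\infty$), but the statement that every class is represented by a connected sum of two-bridge knots is a separate realization claim, and you should supply a precise reference for it (or at least verify it for the specific algebraic concordance class of $K = P(7,-5,4)\#P(-9,7,-6)$). The paper avoids this issue entirely: Rudolph's theorem guarantees a strongly quasipositive knot realizing any prescribed Seifert form and one with trivial Alexander polynomial of the right genus, which is exactly the realization input needed. If you cannot pin down a citation for the two-bridge statement, you should switch to Rudolph's theorem as the paper does---but note that strongly quasipositive knots are not in general homogeneous, so the mechanism by which their $\upsln_3$-contribution is controlled is the rank-one structure of their equivariant invariant (hence the matched genera), not the homogeneity-plus-superadditivity argument you use.
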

\begin{proof}
	First, choose two strongly quasipositive knots $P, Q$ with $g(P) = g(Q) = g(K)$, such that
	$P$ and $K$ have S-equivalent Seifert matrices, and $Q$ has Alexander polynomial~$1$ (that such choices exist is a consequence of \cite{Rudolph_83_ConstructionsOfQP1}).
	Then set $K' = K \# -P \# Q$.  This knot is clearly algebraically slice with $\sigma_{\rm LT}(K') = 0$. Moreover, since $P$ and $Q$ are quasipositive,
	they have equivariant Rasmussen invariants of rank 1, and so connected sum with $-P$ and $Q$
	merely results in two overall shifts for $\upsln_3$, which cancel each other.
\end{proof}

\bibliographystyle{myamsalpha}
\bibliography{References}
\end{document}